\definecolor{rred}{rgb}{0.7,0.0,0.2}
\definecolor{bblue}{rgb}{0.2,0.0,0.7}
\newcommand{\secref}[1]{\ref{sec:#1}}
\newcommand{\seclab}[1]{\label{sec:#1}}
\newcommand{\eqlab}[1]{\label{eq:#1}}
\renewcommand{\eqref}[1]{(\ref{eq:#1})}
\newcommand{\eqsref}[2]{(\ref{eq:#1}) and~(\ref{eq:#2})}
\newcommand{\figref}[1]{Fig.~\ref{fig:#1}}
\newcommand{\figlab}[1]{\label{fig:#1}}
\newcommand{\propref}[1]{Proposition~\ref{proposition:#1}}
\newcommand{\proplab}[1]{\label{proposition:#1}}
\newcommand{\lemmaref}[1]{Lemma~\ref{lemma:#1}}
\newcommand{\lemmalab}[1]{\label{lemma:#1}}
\newcommand{\remref}[1]{Remark~\ref{remark:#1}}
\newcommand{\remlab}[1]{\label{remark:#1}}
\newcommand{\thmref}[1]{Theorem~\ref{theorem:#1}}
\newcommand{\thmlab}[1]{\label{theorem:#1}}
\newcommand{\tablab}[1]{\label{tab:#1}}
\newcommand{\tabref}[1]{Table~\ref{tab:#1}}
\newcommand{\defnlab}[1]{\label{defn:#1}}
\newcommand*\circled[1]{\tikz[baseline=(char.base)]{
            \node[shape=circle,draw,inner sep=2pt] (char) {#1};}}
\title{On the use of blow up to study regularizations of singularities of piecewise smooth dynamical systems in $\mathbb{R}^3$}
\author{K. Uldall Kristiansen and S. J. Hogan\thanks{K. Uldall Kristiansen: Department of Applied Mathematics and Computer Science, Technical University of Denmark, 2800 Kgs. Lyngby, DK. S. J. Hogan: Department of Engineering Mathematics, University of Brisol, Bristol BS8 1UB, United Kingdom. S.J. Hogan wishes to thank both Danmarks Nationalbank and the Otto M{\o}nsteds Fond for support. In addition, he is extremely grateful to Morten Br{\o}ns for hosting a very productive sabbatical at DTU, Lyngby, Denmark from January to September 2014. }}
\begin{document}
\maketitle

%

\begin{abstract}

In this paper we use the blow up method of Dumortier and Roussarie \cite{dumortier_1991,dumortier_1993,dumortier_1996}, in the formulation due to Krupa and Szmolyan \cite{krupa_extending_2001}, to study the regularization of singularities of piecewise smooth dynamical systems \cite{filippov1988differential} in $\mathbb R^3$. Using the regularization method of Sotomayor and Teixeira \cite{Sotomayor96}, first we demonstrate the power of our approach by considering the case of a fold line. We quickly recover a main result of Bonet and Seara \cite{reves_regularization_2014} in a simple manner. Then, for the two-fold singularity, we show that the regularized system only fully retains the features of the singular canards in the piecewise smooth system in the cases when the sliding region does not include a full sector of singular canards. In particular, we show that every locally unique primary singular canard persists the regularizing perturbation. For the case of a sector of primary singular canards, we show that the regularized system contains a canard, provided a certain non-resonance condition holds. Finally, we provide numerical evidence for the existence of secondary canards near resonance. 

\end{abstract}

%

\begin{keywords} 
Piecewise smooth systems, blow up, geometric singular perturbation theory, sliding bifurcations, canards.
\end{keywords}

\begin{AMS}
37G10, 34E15, 37M99
\end{AMS}

\pagestyle{myheadings}
\thispagestyle{plain}

\section{Introduction}

A piecewise smooth dynamical system \cite{filippov1988differential, MakarenkovLamb12} consists of a finite set of ordinary differential equations
\begin{equation}
\dot{\mathbf{x}}=f_i(\mathbf{x}), \quad \mathbf{x}\in R_i\subset \mathbb{R}^n
\eqlab{pwsdef}
\end{equation}
where the smooth vector fields $f_i$, defined on disjoint open regions $R_i$, are smoothly extendable to the closure of $R_i$. The regions $R_i$ are separated by an $(n-1)$-dimensional set $\Sigma$ called the \textit{switching boundary}, which consists of finitely many smooth manifolds intersecting transversely.  The union of $\Sigma$ and all $R_i$ covers the whole state space $D \subseteq \mathbb{R}^n$. {In this paper, we consider $n=3$. There are almost no results for piecewise smooth dynamical systems with $n>3$. We will consider $n=2$ in \cite{krkri_2014}.}

The study of piecewise smooth dynamical systems is important for a number of reasons. Firstly, they challenge the classical notion of solution in at least two distinct ways. When the normal components of the vector fields either side of $\Sigma$ are in the \textit{same} direction, the gradient of a trajectory is discontinuous, leading to Carath\'eodory solutions \cite{cortez2008}. In this case, the dynamics is described as \textit{crossing} or \textit{sewing}. But when the normal components of the vector fields either side of $\Sigma$ are in the \textit{opposite} direction, a vector field \textit{on} $\Sigma$ needs to be defined. The precise choice is not unique and crucially depends on the nature of the problem under consideration. One possibility is the use of differential inclusions. Another choice is to adopt the Filippov convention \cite{filippov1988differential}, where a \textit{sliding} vector field is defined on $\Sigma$. In this case, the dynamics is described as \textit{sliding}. For both Carath\'eodory and Filippov solutions, trajectories and separatrices, as well as notions of topological equivalence and bifurcation, all need revision and extension \cite{filippov1988differential}. 

Secondly, piecewise smooth dynamical systems are of great significance in applications \cite{Bernardo08}, ranging from problems in mechanics (friction, impact) and biology (genetic regulatory networks) to variable structure systems in control engineering where the idea of sliding mode control \cite{Utkin77} has been widely adopted.

It is therefore perhaps not surprising that the theory of the dynamics of piecewise smooth systems is fragmented and, certainly at the time of writing, lacks the coherence and clarity of understanding that characterizes a lot of the work done on smooth systems \cite{Guckenheimer97}. Nevertheless there are many areas of piecewise smooth systems where progress has been made. In particular, it is now known \cite{jeffrey_geometry_2011} that there are eight distinct codimension-1 \textit{sliding bifurcations} in $\mathbb R^3$. A sliding bifurcation occurs where the relative direction of the normal component of vector fields either side of $\Sigma$ is reversed under parameter variation. Crossing and sliding interchange, leading to fundamentally different dynamics. 

The approach used by \cite{jeffrey_geometry_2011} was geometric, exploiting known results from singularity theory. The fundamental objects involved in these sliding bifurcations are: the fold, the cusp and the two-fold. It was subsequently shown \cite{desroches_canards_2011} that these two-folds contain dynamics that is very similar to canards \cite{Benoit81}. An in-depth study \cite{Desroches13} of a planar slow-fast piecewise linear caricature of the van der Pol system showed that the ensuing limit cycles, dubbed \textit{quasi-canards}, have a great deal in common with van der Pol canards, in particular explosive growth under parameter variation. Another piecewise linear system was considered in \cite{Prohens13}, where the connection to canards was also made. It should be emphasized that in all three studies \cite{Desroches13, desroches_canards_2011, Prohens13}, the so-called critical manifold of the slow-fast system is piecewise smooth and the presence of folded points is not necessary to ensure the existence of canards. In fact, the canards themselves are piecewise smooth, involving sliding in  \cite{desroches_canards_2011} and crossing in  \cite{Desroches13,Prohens13}. 

The connection between the dynamics in smooth systems and in piecewise smooth systems is also of great interest, since behaviour in one type of system is often assumed to be ``close" to that in the other. For example, piecewise smooth functions \cite{Carmona08} are often used as caricatures of nonlinear functions \cite{Michelson86}. A nonlinear function is replaced by piecewise linear approximations and a set of simpler linear problems is solved instead \cite{casey06}. More recently, in a reversal of this trend, in many practical applications it is unfeasible to deal with a large number of switches between different dynamics and so piecewise smooth systems have been replaced by smooth counterparts \cite{ISI:000174961100007,ISI:000246954500016,ISI:000246954500026}. 

But is it true that the behaviour in a piecewise smooth system is ``close" to that in a corresponding smooth system, and if so, how close? An early theoretical result in this area for the case $n=2$ is due to Sotomayor and Teixeira \cite{Sotomayor96}. In the case of sliding using the Filippov convention, they proposed a regularization which involved replacing the switching manifold $\Sigma$ with a boundary layer of width $2\varepsilon$. Outside this layer, the regularization agreed with the piecewise smooth vector fields either side of $\Sigma$. Inside this layer, a suitable monotonic function was chosen, such that the regularization was differentiable everywhere. The case $n=3$ was considered by \cite{Llibre97} and the general case in \cite{llibre_sliding_2008}.  In \cite{Llibre07}, it was shown that the regularization process developed in \cite{Sotomayor96} produces a singular problem for which the discontinuous set is a critical manifold and that, for a class of vector fields, the sliding vector field coincides with the reduced problem of the corresponding singular problem. Other regularizations may possess some or all of these properties but, to date, no results have appeared in the literature.

But what about bifurcations? Are the bifurcations in piecewise smooth systems ``close" to bifurcations in a corresponding smooth system? Bonet and Seara \cite{reves_regularization_2014} considered the case of the fold, which is responsible for the sliding bifurcation known as \textit{grazing-sliding} (as well as other piecewise smooth phenomena). Here they encountered a fundamental difficulty in answering these questions. The fold gives rise to a nonhyperbolic point in the critical manifold. But Fenichel theory \cite{fen1, fen2, fen3, jones_1995} requires hyperbolicity. So Bonet and Seara \cite{reves_regularization_2014} needed to extend Fenichel theory close to the fold point, which they achieved using asymptotic methods. They showed that topological features of the piecewise smooth bifurcations were preserved under regularization. 

{A widely used approach in geometric singular perturbation theory to deal with loss of hyperbolicity is the \textit{blow up method}. Originally due to Dumortier and Roussarie \cite{dumortier_1991,dumortier_1993,dumortier_1996}, the method has been developed subsequently by Krupa and Szmolyan \cite{krupa_extending_2001}. It involves the application of a map that blows up nonhyperbolic points to a higher dimensional cylinder, then uses rescalings of the resulting vector field to desingularize the problem and obtain non-trivial dynamics on the cylinder (see the survey article by Alvarez \textit{et al.} \cite{Alvarez11} for more information on blow up methods). Krupa and Szmolyan applied their blow up method in planar slow-fast systems in \cite{krupa_extending_2001,krupa_extending_2001_nonlinearity} and were able to extend slow manifolds beyond fold, transcritical and pitchfork singularities. In \cite{krupa_extending_2001} the authors also described the geometry of planar canards. Since then the method has been applied to many problems. For example, it has been successfully applied in biochemistry \cite{kosiuk_scaling_2011,2014arXiv1403.5658K} and to study canards in $\mathbb R^3$ \cite{krupa_local_2010, szmolyan_canards_2001}.} Note that the term \textit{blow up} is also used in other areas of mathematics and can have different meanings, depending on the context. In particular, it is used in \cite{Llibre07,llibre_sliding_2008,Llibre97} to describe sliding motion, using the singular limit of a slow-fast system, but these references do not consider potential nonhyperbolic points. 

%
%


{In this paper, our main aim is to apply the blow up method \cite{dumortier_1991,dumortier_1993,dumortier_1996}, as formulated by Krupa and Szmolyan \cite{krupa_extending_2001}, to examine how (singular) canards associated with a two-fold behave under the regularization proposed by Sotomayor and Teixeira \cite{Sotomayor96}.} En route, we will recover a main result of Bonet and Seara \cite{reves_regularization_2014} in a simpler way. 

Our paper is structured as follows. Following preliminaries and the problem statement in section \secref{prelim}, we present the normalized equations of motion near a two-fold singularity of a piecewise smooth system in section \secref{normalizedEqs}. We then present, in section \secref{slidingVectorField}, the sliding vector field in this case and show how \textit{singular canards} occur naturally in the piecewise smooth system. In section \secref{regularize}, we present the regularization of our piecewise smooth system, following Sotomayor and Teixeira \cite{Sotomayor96}. In section \secref{foldLines}, we apply the blow up method to the case of a fold (line). In this way, we demonstrate the power of this approach whilst recovering results of Bonet and Seara \cite{reves_regularization_2014}. The main section of the paper is section \secref{twoFold}, where we apply the blow up method to the case of a two-fold, to show how canards seen in the piecewise smooth problem can persist under regularization. In \secref{numerics}, we present some numerical experiments showing the presence of secondary canards. Finally, we discuss future work in section \secref{future} and conclude in section \secref{conclusion}.

\section{Preliminaries and problem statement}
\seclab{prelim}
In this section, we introduce our notation and set up the problem. Let $\textbf x=(x,y,z)\in \mathbb R^3$ and consider an open set $\mathcal U$ and a smooth function $f=f(\textbf x)$ having $0$ as a regular value. Then $\Sigma\subset \mathcal U$ defined by $\Sigma=f^{-1}(0)$ is a smooth $2D$ manifold. The manifold $\Sigma$ will be our \textit{switching manifold}. It separates the set $\Sigma_+ =\{(x,y,z)\in \mathcal U\vert f(x,y,z)>0\}$ from the set $\Sigma_-=\{(x,y,z)\vert f(x,y,z)<0\}$. We introduce local coordinates so that $f(x,y,z)=y$ and $\Sigma=\{(x,y,z)\in \mathcal U\vert y=0\}$. 

We consider two smooth vector-fields $X^+$ and $X^-$ that are smooth on $\overline{\Sigma}_+$ and $\overline{\Sigma}_-$, respectively, and define the piecewise smooth vector-field $X=(X^-,X^+)$ by
\begin{eqnarray*}
  X(\textbf x) 
  =\left\{ \begin{array}{cc}
                                   X^-(\textbf x)& \text{for}\quad  \textbf x\in \Sigma_-\\
                                   X^+(\textbf x)& \text{for}\quad \textbf x\in \Sigma_+
                                  \end{array}\right.
\end{eqnarray*}
Then, as mentioned above, $\Sigma$ is divided into two types of region: crossing and sliding:
\begin{itemize}
 \item $\Sigma_{cr}\subset \Sigma$ is the \textit{crossing region} where $(X^+ f(x,0,z))(X^-f(x,0,z)) =X_2^+(x,0,z) X_2^-(x,0,z) >0$.
 \item $\Sigma_{sl}\subset \Sigma$ is the \textit{sliding region} where $(X^+ f(x,0,z))(X^-f(x,0,z)) =X_2^+(x,0,z) X_2^-(x,0,z)<0$.
\end{itemize}
Here $X^{\pm} f=\nabla f\cdot X^{\pm}$ denotes the Lie-derivative of $f$ along $X^{\pm}$. Since $f(x,y,z)=y$ in our coordinates we have simply that $X^{\pm} f =X_2^{\pm}$. On $\Sigma_{sl}$ we follow the Filippov convention \cite{filippov1988differential} and define the sliding vector-field as a convex combination of $X^+$ and $X^-$
\begin{eqnarray}
 X_{sl}(\textbf x) = \sigma X^+(\textbf x)+ (1-\sigma) X^-(\textbf x),\eqlab{XSliding}
\end{eqnarray}
where $\sigma \in (0,1)$ is defined so that the vector-field $X_{sl}(\textbf x)$ is tangent to $\Sigma_{sl}$:
\begin{eqnarray*}
 \sigma = \frac{X^-f(x,0,z)}{X^-f(x,0,z)-X^+f(x,0,z)}.\eqlab{lambdaSliding}
\end{eqnarray*}

From the above, it is clear that, in general, trajectories in $\Sigma_{\pm}$ can reach $\Sigma$ in finite time (backward or forward). Hence, since $X_{sl}(\textbf x)$ can have equilibria (usually called \textit{pseudoequilibira}, or sometimes \textit{quasiequilibira}), it is possible for trajectories to reach these equilibria in finite time, unlike in smooth systems. An orbit of a piecewise smooth system can be made up of a concatenation of arcs from $\Sigma$ and $\Sigma_{\pm}$.

The boundaries of $\Sigma_{sl}$ and $\Sigma_{cr}$ where $X^+ f = X_2^+=0$ or $X^-f = X_2^-=0$ are singularities called \textit{tangencies}. In what follows, we define two different types of generic tangencies: the fold and the two-fold. We will consider the cusp singularity in future work.
\begin{definition}
A point $q\in \Sigma$ is a fold singularity if 
\begin{eqnarray}
X^{+} f(q)=0,\quad X^+(X^+f)(q)\ne 0,\quad \text{and}\quad X^-f(q)\ne 0,\eqlab{foldPlus}
\end{eqnarray}
or 
\begin{eqnarray}
X^{-} f(q)=0,\quad X^-(X^-f)(q)\ne 0,\quad \text{and}\quad X^+f(q)\ne 0.\eqlab{foldNegative}
\end{eqnarray}
A point $p\in \Sigma$ is a two-fold singularity if both $X^{+}f(p)=0$ and $X^-f(p)=0$, as well as $X^+(X^+f)(p)\ne 0$ and $X^-(X^-f)(p)\ne 0$ and if 
the vectors $X^+(p)$ and $X^-(p)$ are not parallel. 
\end{definition}

We have
\begin{proposition}
The two-fold singularity is the transversal intersection of two lines $l^+$ and $l^-$ of fold singularities satisfying \eqref{foldPlus} and \eqref{foldNegative} respectively. 
\end{proposition}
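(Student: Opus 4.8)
The plan is to work entirely on the switching manifold $\Sigma=\{y=0\}$, which I parametrize by $(x,z)$, and to realize each fold line there as a regular level set. Since $f(x,y,z)=y$ we have $X^{\pm}f=X_2^{\pm}$, so the two candidate fold lines are the tangency sets $l^{+}=\{(x,0,z):X_2^{+}(x,0,z)=0\}$ and $l^{-}=\{(x,0,z):X_2^{-}(x,0,z)=0\}$. I would introduce the scalar functions $h^{\pm}(x,z):=X_2^{\pm}(x,0,z)$ on $\Sigma$, so that $l^{\pm}=\{h^{\pm}=0\}$, and note that the two-fold point $p$ lies on both since $X^{+}f(p)=X^{-}f(p)=0$.

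First I would establish that $l^{+}$ and $l^{-}$ are smooth curves through $p$ via the implicit function theorem. The key computation is that, because $X_2^{+}(p)=0$, the nondegeneracy condition unfolds as
\[
X^{+}(X^{+}f)(p)=X_1^{+}(p)\,\partial_x X_2^{+}(p)+X_2^{+}(p)\,\partial_y X_2^{+}(p)+X_3^{+}(p)\,\partial_z X_2^{+}(p)=\bigl\langle (X_1^{+},X_3^{+})(p),\,\nabla h^{+}(p)\bigr\rangle,
\]
the middle term dropping out. Since $X^{+}(X^{+}f)(p)\neq 0$ this inner product is nonzero, which forces $\nabla h^{+}(p)\neq 0$; hence $0$ is a regular value of $h^{+}$ near $p$ and $l^{+}$ is a smooth one-dimensional manifold in a neighbourhood of $p$. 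The identical argument applied to $X^{-}(X^{-}f)(p)\neq 0$ produces $l^{-}$.

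Next I would check that every point of $l^{+}$ close enough to $p$ is genuinely a fold of type \eqref{foldPlus}, and symmetrically for $l^{-}$ and \eqref{foldNegative}. Along $l^{+}$ the equality $X^{+}f=X_2^{+}=0$ holds by construction, while the remaining requirements $X^{+}(X^{+}f)\neq0$ and $X^{-}f\neq0$ are open conditions holding at $p$; by continuity of $X^{\pm}$ and their first derivatives they persist on a neighbourhood of $p$ in $\Sigma$, hence along the arc of $l^{+}$ near $p$. This identifies $l^{+}$ and $l^{-}$ as lines of fold singularities intersecting at $p$.

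The main obstacle is the claimed transversality of $l^{+}$ and $l^{-}$ at $p$. Because the tangent line to $l^{\pm}$ is the orthogonal complement of $\nabla h^{\pm}(p)$ in $T_p\Sigma$, transversality is equivalent to linear independence of the conormals $\nabla h^{+}(p)$ and $\nabla h^{-}(p)$. The hypotheses give related but weaker statements: the projected direction $(X_1^{\pm},X_3^{\pm})(p)$ is transverse to $l^{\pm}$ precisely because $\langle (X_1^{\pm},X_3^{\pm})(p),\nabla h^{\pm}(p)\rangle=X^{\pm}(X^{\pm}f)(p)\neq0$, and the assumption that $X^{+}(p)$ and $X^{-}(p)$ are not parallel says that these two projected directions (both lying in $T_p\Sigma$, since $X_2^{\pm}(p)=0$) are independent. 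The delicate point is that independence of the field directions does not by itself force independence of the gradients $\nabla h^{\pm}(p)$---these are a priori separate data---so I expect transversality to be the genuine crux, to be secured either by passing to the normalized coordinates of section \secref{normalizedEqs} or by reading transversality as part of the nondegeneracy that defines the two-fold.
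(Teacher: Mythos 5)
Your construction of the two fold curves is precisely the paper's argument. The printed proof is only the sentence that this is ``a relatively simple application of the implicit function theorem''; the fuller version (left in the source but suppressed) introduces coordinates adapted to $(X_1^{+},X_3^{+})(p)$ and solves $X_2^{+}(x,0,z)=0$ by the implicit function theorem, using exactly your observation that, since $X_2^{+}(p)=0$, the nondegeneracy $X^{+}(X^{+}f)(p)\neq 0$ reduces to $\bigl\langle (X_1^{+},X_3^{+})(p),\nabla h^{+}(p)\bigr\rangle \neq 0$ and hence forces $\nabla h^{+}(p)\neq 0$; it likewise invokes continuity to propagate the defining inequalities of \eqref{foldPlus} and \eqref{foldNegative} along the curves. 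So on existence and smoothness of $l^{\pm}$ you and the paper coincide.

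Your hesitation about transversality is not excessive caution: it is a genuine gap, and it lies in the statement rather than in your argument. The hypotheses in the paper's definition of a two-fold do \emph{not} imply that $l^{+}$ and $l^{-}$ cross transversally, and the paper's proof (in either form) never derives it. Concretely, near $p=0$ take
\begin{equation*}
X^{+}=(1,\,z,\,1),\qquad X^{-}=(0,\,z-x^{2},\,1),\qquad f=y .
\end{equation*}
Then $X^{\pm}f(p)=0$, $X^{+}(X^{+}f)(p)=X_3^{+}(p)=1\neq 0$, $X^{-}(X^{-}f)(p)=X_3^{-}(p)=1\neq 0$, and $X^{+}(p)=(1,0,1)$, $X^{-}(p)=(0,0,1)$ are not parallel, so all conditions of the definition hold. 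Moreover $l^{+}=\{y=0,z=0\}$ and $l^{-}=\{y=0,z=x^{2}\}$ consist, away from $p$, of genuine fold points satisfying \eqref{foldPlus} and \eqref{foldNegative} respectively, and they meet only at $p$ --- but tangentially, not transversally. This confirms your diagnosis: the hypotheses only make each projected direction $(X_1^{\pm},X_3^{\pm})(p)$ transverse to its \emph{own} fold line and the two directions mutually independent, which leaves the conormals $\nabla h^{+}(p)$ and $\nabla h^{-}(p)$ free to be parallel. The correct reading is your second alternative: transversality --- equivalently, linear independence at $p$ of the restrictions to $\Sigma$ of $\nabla (X^{+}f)$ and $\nabla(X^{-}f)$ --- must be taken as part of the genericity defining the two-fold, as in the literature the paper follows, and it is this assumption that licenses the coordinates of section \secref{normalizedEqs} in which $l^{+}$ and $l^{-}$ become the coordinate axes.
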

\begin{proof}
This is a relatively simple application of the implicit function theorem. 

\end{proof}
%
%
Following this proposition it is therefore possible to introduce a new smooth set of coordinates, which we continue to denote by $\textbf{x}=(x,y,z)$, so that  $l^+$ and $l^-$ become the $x$ and $z$-axis respectively, namely $$l^+=\{\textbf{x}\in \mathcal U\vert y=0=z\}\quad \text{and} \quad l^-=\{\textbf{x}\in \mathcal U\vert x=0=y\},$$ possibly restricting $\mathcal U$ further. The two-fold singularity $p$ is then at the origin: $$p=(0,0,0).$$ We shall also continue to denote the new vector-field by $X=(X^-,X^+)$. Conditions \eqsref{foldPlus}{foldNegative} also imply that for this new vector-field the following inequalities hold
\begin{eqnarray*}
 X_3^+\vert_{l^+} \ne 0,\,X_1^-\vert_{l^-} \ne 0.
\end{eqnarray*}
 In particular:
 \begin{align}
  X_3^+(p) \ne 0,\,X_1^-(p)\ne 0.\eqlab{X1PositiveX3NegativeNEQ0}
 \end{align}

 For a fold, it is important to distinguish between the \textit{visible} and \textit{invisible} cases.
\begin{definition}\cite[Definition 2.1]{jeffrey_geometry_2011}
A fold singularity $q$ with $X^{+} f(q)=0$ or $X^{-} f(q)=0$ is \textnormal{visible} if
 \begin{eqnarray*}
\mbox{{$X^{+ }(X^{+ } f)(q) >0\quad \text{or} \quad X^{-}(X^{- } f)(q) <0,\quad \text{respectively}$}},
 \end{eqnarray*}
and \textnormal{invisible} if
\begin{eqnarray*}
\mbox{{$X^{+ }(X^{+ } f)(q) <0\quad \text{or}\quad X^{-}(X^{- } f)(q) >0,\quad \text{respectively}.$}}
\end{eqnarray*}
\end{definition}
\begin{definition} \cite[Definition 2.3]{jeffrey_geometry_2011}
 Similarly we say that the two-fold singularity $p$ is 
 \begin{itemize}
  \item \textnormal{visible} if the fold lines $l^+$ and $l^-$ are both visible;
  \item \textnormal{invisible-visible} if $l^+$ ($l^-$) is visible and $l^-$ ($l^+)$ is invisible;
  \item \textnormal{invisible} if $l^+$ and $l^-$ are both invisible. 
 \end{itemize}
\end{definition}

\section{Normalized equations of motion near a two-fold sliding bifurcation}\seclab{normalizedEqs}
In this section, we derive a normal form for the equations of motion near a two-fold singularity, in such a way that the sliding and crossing regions remain fixed under parameter variation. 

By Taylor expanding $X^{\pm}$ about the origin $p$ we consider the following systems:
\begin{eqnarray*}
 \dot x &=&X^+_1(p)+O(x+y+z),\\
 \dot y&=&\partial_y X^+_2(p)y+\partial_z X^+_2(p)z +R^+(x,y,z),\\
 \dot z&=&X^+_3(p)+O(x+y+z),
\end{eqnarray*}
for $y>0$ and
\begin{eqnarray*}
 \dot x &=&X^-_1(p)+O(x+y+z),\\
 \dot y&=&\partial_y X^-_2(p)y+\partial_z X^+_2(p)x +R^-(x,y,z),\\
 \dot z&=&X^-_3(p)+O(x+y+z),
\end{eqnarray*}
for $y<0$, with the quadratic $R^{\pm}=O(2)$ satisfying
\begin{eqnarray*}
 R^+(x,0,0)\equiv 0,\quad R^-(0,0,z)\equiv 0.
\end{eqnarray*}
Following \eqref{X1PositiveX3NegativeNEQ0} we introduce $(\tilde x,\tilde z)$  where
\begin{eqnarray*}
 x&=& \frac{\tilde x}{X_1^-(p)},\\
 z&=& \frac{\tilde z}{X_3^+(p)},
\end{eqnarray*}
which potentially reverses the orientation. Dropping the tildes, this gives the following equations:
\begin{eqnarray*}
 \dot x &=&c+O(x+y+z),\\
 \dot y&=&ay+bz +\mathcal O((y+z)(x+y+z)),\nonumber\\
 \dot z&=&1+O(x+y+z),\nonumber
\end{eqnarray*}
for $y>0$ and
\begin{eqnarray*}
 \dot x &=&1+O(x+y+z),\\
 \dot y&=&\alpha y-\beta  x +\mathcal O((x+y)(x+y+z)),\nonumber\\
 \dot z&=&\gamma +O(x+y+z),\nonumber
\end{eqnarray*}
for $y<0$.
The constants $b$ and $\beta$ are both non-zero (since the lines $y=0=z$ and $x=0=y$ are fold singularities). The signs of $b$ and $\beta$ determine the visibility of the folds but they also determine regions of sliding and crossing. In order to simplify the sequel, we introduce the following scalings to ensure that each region retains its original sliding or crossing characteristics under parameter variation. Hence
\begin{eqnarray*}
 x&=& \text{sign}(\beta) \tilde x,\\
 z&=& \text{sign}(b) \tilde z,\\
c&=&\beta^{-1}\tilde c,\\
 \gamma &=&b^{-1}\tilde \gamma,
\end{eqnarray*}
to obtain the system described in the following proposition (again, dropping the tildes):
\begin{proposition}\proplab{visibleInvisble}
 Consider a piecewise smooth system $X=(X^-,X^+)$ with a two-fold singularity $p$ where $X^+(p)$ and $X^-(p)$ are independent. Then within a sufficiently small neighborhood of $p$ the system can be transformed into the following \textnormal{normal form}:
\begin{eqnarray}
 \dot x &=&\vert \beta\vert^{-1} c+O(x+y+z),\eqlab{XPositive}\\
 \dot y&=&ay+\vert b\vert z +\mathcal O((y+z)(x+y+z)),\nonumber\\
 \dot z&=&\text{sign}(b)+O(x+y+z),\nonumber
\end{eqnarray}
for $y>0$ and
\begin{eqnarray}
 \dot x &=&\text{sign}(\beta)+O(x+y+z),\eqlab{XNegative}\\
 \dot y&=&\alpha y-\vert \beta\vert x +\mathcal O((x+y)(x+y+z)),\nonumber\\
 \dot z&=&\vert b \vert^{-1}\gamma +O(x+y+z),\nonumber
\end{eqnarray}
for $y<0$ and $c-\gamma \ge 0$. By possibly changing the direction of time, we can take $c+\gamma\ge 0$. The two-fold singularity is placed at the origin and it is the transverse intersection of two lines of fold singularities 
\begin{align}
l^+:&\,y=0=z,\eqlab{foldLines1}\\
l^-:&\,x=0=y,\nonumber
\end{align}
corresponding to tangency from above and below, respectively. Furthermore:
\begin{itemize}
 \item $l^+$ is visible (invisible) for $b>0$ ($b<0$);
 \item $l^-$ is visible (invisible) for $\beta >0$ ($\beta<0$);
\end{itemize}
and hence the two-fold is 
\begin{itemize}
 \item visible if $b>0$ and $\beta>0$;
 \item visible-invisible if $b>0$ and $\beta<0$ (or $b<0$ and $\beta>0$);
 \item invisible if $b<0$ and $\beta<0$. 
\end{itemize}
Moreover, 
\begin{eqnarray}
 \Sigma_{sl}:\,y=0,xz>0,\eqlab{SigmaSliding}\\
 \Sigma_{cr}:\,y=0,xz<0.\nonumber
\end{eqnarray}
$\Sigma_{sl} = \Sigma_{sl}^-\cup \Sigma_{sl}^+$ where $\Sigma_{sl}^-:\,y=0,x<0,z<0$ and $\Sigma_{sl}^+:\,y=0,x>0,z>0$ are stable and unstable sliding regions, respectively. Similarly, $\Sigma_{cr} = \Sigma_{cr}^-\cup \Sigma_{cr}^+$ where $\Sigma_{cr}^-:\,y=0,x>0,z<0$ and $\Sigma_{cr}^+:\,y=0,x<0,z>0$ are regions with crossing downwards and upwards respectively.

\end{proposition}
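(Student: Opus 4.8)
The plan is to obtain the normal form by a sequence of explicit, orientation-tracking changes of coordinates, and then to read off the visibility and the sliding/crossing structure directly from the leading-order coefficients. First I would invoke the preceding proposition to place the two fold lines along the coordinate axes, so that $l^+=\{y=z=0\}$, $l^-=\{x=y=0\}$ and the two-fold sits at the origin. Taylor expanding $X^\pm$ at $p$ and using that $X_2^+$ vanishes identically on $l^+$ (hence $\partial_x X_2^+(p)=0$ and $R^+(x,0,0)\equiv 0$), and symmetrically for $X_2^-$ on $l^-$, pins down the displayed linear parts together with the vanishing of the quadratic remainders along the axes. The nondegeneracy \eqref{X1PositiveX3NegativeNEQ0} guarantees $X_1^-(p),X_3^+(p)\neq 0$, so the rescaling $x=\tilde x/X_1^-(p)$, $z=\tilde z/X_3^+(p)$ (with a positive time scaling) normalizes the transverse drifts $\dot x|_{y<0}$ and $\dot z|_{y>0}$ to unit speed, leaving the two tangential drifts as the parameters $c$ and $\gamma$. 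I would then absorb the signs of the two fold coefficients through $x\mapsto\mathrm{sign}(\beta)\,x$, $z\mapsto\mathrm{sign}(b)\,z$ and the matching $c=\beta^{-1}\tilde c$, $\gamma=b^{-1}\tilde\gamma$. The reason for performing this at the level of the sign, rather than folding it into the previous rescaling, is exactly the stated aim that the switching structure stay fixed: it forces the coefficients of $z$ in $\dot y|_{y>0}$ and of $x$ in $\dot y|_{y<0}$ to be $|b|$ and $-|\beta|$ for all parameter values, so the loci of $\Sigma_{sl}$ and $\Sigma_{cr}$ do not move under parameter variation. This produces \eqref{XPositive} and \eqref{XNegative}.

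With the normal form in hand the geometric conclusions are short computations. For visibility I would differentiate $y$ twice at a fold point: on $l^+$ one has $\dot y=0$, so $\ddot y=a\dot y+|b|\dot z+O(\cdot)=|b|\,\mathrm{sign}(b)=b$ to leading order, whence $l^+$ is visible exactly when $b>0$; the symmetric computation on $l^-$ gives $\ddot y=-|\beta|\,\mathrm{sign}(\beta)=-\beta$, so $l^-$ is visible exactly when $\beta>0$, yielding the three cases. For the switching structure I would evaluate $X_2^\pm$ on $y=0$, where $X_2^+=|b|z+O(z(x+z))$ and $X_2^-=-|\beta|x+O(x(x+z))$, so that $X_2^+X_2^-$ has the sign of $-xz$ near the origin; this gives $\Sigma_{sl}:\,xz>0$ and $\Sigma_{cr}:\,xz<0$ as in \eqref{SigmaSliding}. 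The four sub-regions then follow from the individual signs: both fields point into $\Sigma$ (stable sliding) precisely when $X_2^+<0$ and $X_2^->0$, i.e. $z<0$ and $x<0$, identifying $\Sigma_{sl}^-$; the remaining three sign combinations give $\Sigma_{sl}^+$, $\Sigma_{cr}^-$ and $\Sigma_{cr}^+$.

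The reduction of parameters to $c-\gamma\ge 0$ I would obtain from the discrete symmetry reflecting $y$ and interchanging the two fold directions, $(x,y,z)\mapsto(z,-y,x)$ together with the relabelling $X^+\leftrightarrow X^-$. One checks this map is orientation preserving, returns the system to the normal form \eqref{XPositive}--\eqref{XNegative}, and merely exchanges the two tangential drifts, $c\leftrightarrow\gamma$ (while exchanging the roles of $l^+$ and $l^-$, which leaves the visibility classification invariant). Hence, after possibly applying it, we may assume $c\ge\gamma$.

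The one genuinely delicate point, and the step I expect to be the main obstacle, is arranging $c+\gamma\ge 0$ by reversal of time. Under $t\mapsto -t$ one has $X^\pm\mapsto -X^\pm$, which negates the fold coefficients $b,\beta$ and the transverse drifts while leaving the curvature $\ddot y$—and hence the visibility type—unchanged; to return to the normal form one must therefore accompany the time reversal by the reflection(s) needed to restore the sign conventions $\dot z|_{y>0}=\mathrm{sign}(b)$ and $\dot x|_{y<0}=\mathrm{sign}(\beta)$. The care required is precisely in tracking how these forced reflections interact with the earlier sign-scalings and with the orientation-reversals already hidden in the factors $1/X_1^-(p)$, $1/X_3^+(p)$, so that $c-\gamma\ge 0$ is preserved while the sign of $c+\gamma$ is flipped. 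Getting this competing bookkeeping of signs to close consistently is the fiddly core of the argument on which the clean final normalization rests; the remaining assertions of the proposition are then immediate from the computations above.
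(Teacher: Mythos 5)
Most of your proposal coincides with the paper's own argument: the Taylor expansion adapted to the fold lines, the rescaling by $1/X_1^-(p)$, $1/X_3^+(p)$, the sign-absorbing scalings $x\mapsto \text{sign}(\beta)x$, $z\mapsto \text{sign}(b)z$ with $c=\beta^{-1}\tilde c$, $\gamma=b^{-1}\tilde\gamma$, the visibility computation (your $\ddot y$ is exactly the paper's $X^{\pm}(X^{\pm}f)$ evaluated on the fold lines), the sliding/crossing classification from the sign of $X_2^+X_2^-$, and the map $(x,y,z)\mapsto(z,-y,x)$ for $c-\gamma\ge 0$ (your bookkeeping there, a clean swap $b\leftrightarrow\beta$, $c\leftrightarrow\gamma$, $a\leftrightarrow\alpha$, is in fact more accurate than the paper's wording).

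The genuine gap is the step you yourself single out as the ``fiddly core'' and then never perform: arranging $c+\gamma\ge 0$ by time reversal. Declaring the sign bookkeeping delicate is not the same as doing it, so your proof is incomplete precisely at its announced crux; and if you carry the bookkeeping out, it closes the wrong way. Reversing time turns the $\dot y$-coefficients into $-\vert b\vert z$ and $+\vert \beta\vert x$ and the drifts into $-\text{sign}(b)$, $-\text{sign}(\beta)$, and the only reflections restoring all the sign conventions of \eqref{XPositive}--\eqref{XNegative} are $(x,z)\mapsto(-x,-z)$, possibly composed with the axis swap $(x,y,z)\mapsto(z,-y,x)$. Composing $t\mapsto -t$ with $(x,z)\mapsto(-x,-z)$ sends $(a,b,c,\alpha,\beta,\gamma)\mapsto(-a,b,c,-\alpha,\beta,\gamma)$, and adding the axis swap merely interchanges the two parameter triples; in every case $c+\gamma$ is unchanged, so no combination of time reversal and normal-form-restoring reflections flips its sign. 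This is forced rather than accidental: $-(c+\gamma)=\lambda_++\lambda_-$ is the trace of the linearization of the desingularized sliding field \eqref{slidingVectorFieldNewTime}, and that field is \emph{identical} for $X$ and $-X$, because reversing time also interchanges $\Sigma_{sl}^-$ and $\Sigma_{sl}^+$ (a faux singular canard of $X$ is a faux, not a primary, singular canard of $-X$). Hence $\text{sign}(c+\gamma)$ is an invariant of the system modulo coordinate changes and time reversal, and the step you deferred cannot be completed as you describe. For what it is worth, the paper's own one-line proof of this point (``multiply the vector field by $\text{sign}(c+\gamma)$'') glosses over the same difficulty, since the time-reversed field violates the built-in sign conventions and renormalizing restores the original sign of $c+\gamma$; but your proposal neither completes nor repairs this step, it only postpones it.
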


\begin{proof}
To take $c+\gamma\ge 0$, we simply multiply the vector-field by $\text{sign}(c+\gamma)$, potentially, this changes the direction of time. To ensure $c-\gamma \ge 0$ we transform $(x,y,z)\mapsto (z,-y,x)$, reverse $b$ and $\beta$ and change the signs of $a$ and $\alpha$. 

 The visibility of $l^+$ is determined by the sign of 
 \begin{eqnarray*}
  X^+(X^+f)(x,0,0) =\vert b\vert \text{sign}(b)+\mathcal O(x)= b+\mathcal O(x).
 \end{eqnarray*}
The statement about the visibility of $l^+$ therefore follows by taking $x$ sufficiently small. Similarly, the statement about the visibility of $l^-$ follows from
 \begin{eqnarray*}
  X^-(X^-f)(0,0,z) =-\vert \beta \vert \text{sign}(\beta)+\mathcal O(z)=-\beta+\mathcal O(z),
 \end{eqnarray*}
  taking $z$ sufficiently small.

The region $\Sigma_{sl}$ \eqref{SigmaSliding} is a sliding region because
\begin{eqnarray*}
 X_2^+(x,0,z)X_2^-(x,0,z) = (\vert b\vert +\mathcal O(x+z))(-\vert \beta \vert +\mathcal O(x+z))xz <0.
\end{eqnarray*}
$\Sigma_{sl}^-$ is stable sliding because $X_2^+(x,0,z) = (\vert b\vert +\mathcal O(x+z))z<0$ there. Similarly, $\Sigma_{cr}^-$ is a region with crossing downwards.
\end{proof}

We illustrate the different two-fold singularities and the division of $\Sigma$ into sliding and crossing regions in \figref{sliding}. We emphasize again that our normal form is such that these regions remain fixed for all parameter values.
\begin{figure}[h!] 
\begin{center}
\subfigure[Visible two-fold: $b>0$ and $\beta>0$]{\includegraphics[width=.49\textwidth]{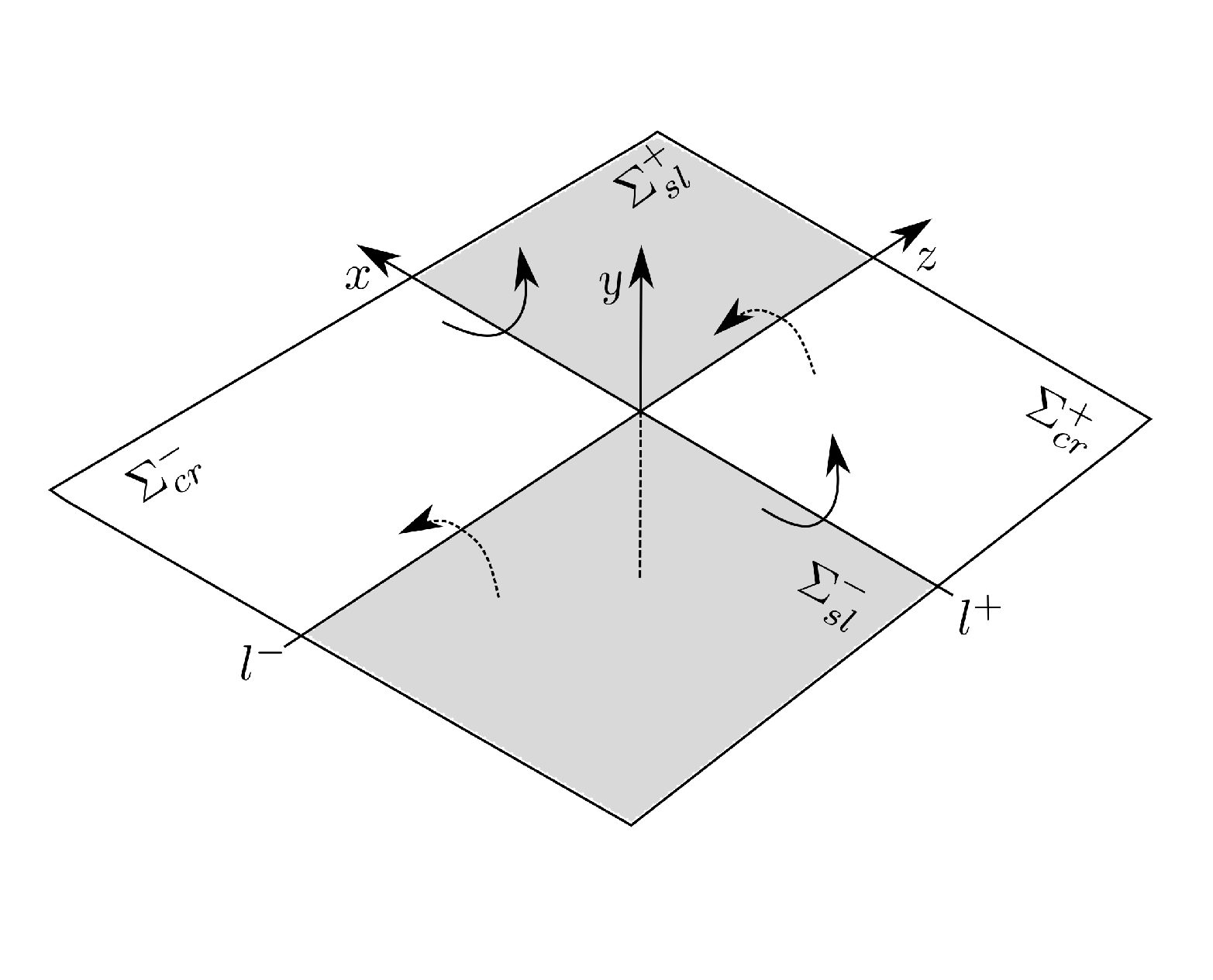}}
\subfigure[Invisible two-fold: $b<0$ and $\beta<0$]{\includegraphics[width=.49\textwidth]{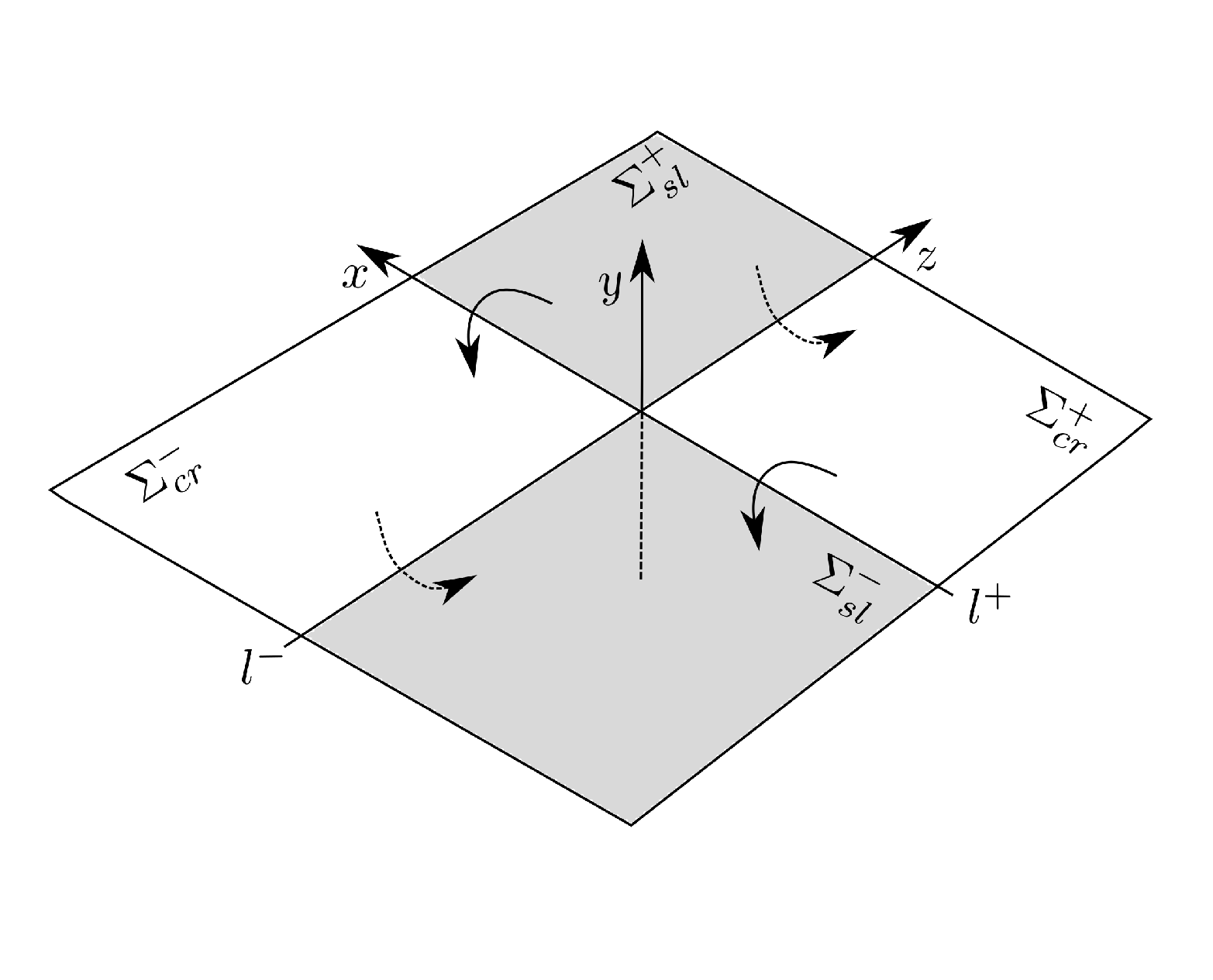}}
\subfigure[Visible-invisible two-fold: $b>0$ and $\beta<0$]{\includegraphics[width=.49\textwidth]{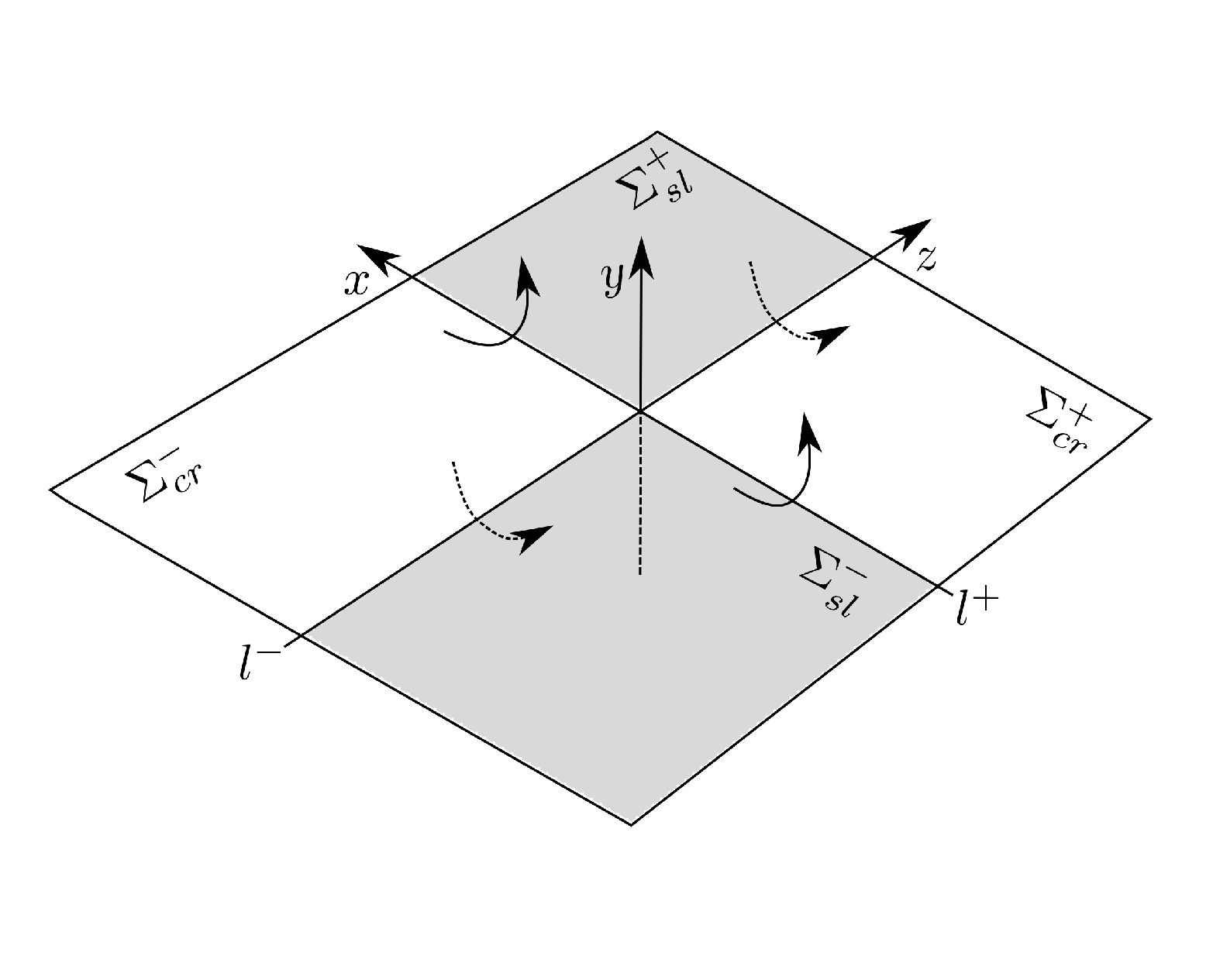}}
\end{center}
 \caption{Illustration of the different types of two-fold singularities. The dotted trajectories indicate tangencies of trajectories of $X^-$ with the fold line $l^-$. Similarly, the full trajectories illustrate the tangencies of $X^+$ with the fold line $l^+$. The case $b<0$ and $\beta>0$ is symmetrically related to the case shown in (c). }
\figlab{sliding}
\end{figure}

 \section{The sliding vector-field and singular canards}\seclab{slidingVectorField}
The sliding vector-field within $\Sigma_{sl}$ is given by
\begin{eqnarray}
 \dot x&=&\sigma X_1^+(x,0,z)+(1-\sigma )X_1^-(x,0,z),\eqlab{slidingEqns}\\
 \dot y&=&0,\nonumber\\
 \dot z&=&\sigma X_3^+(x,0,z)+(1-\sigma )X_3^-(x,0,z),\nonumber
\end{eqnarray}
with
\begin{eqnarray}
 \sigma &=& \frac{X^-f(x,0,z)}{X^-f(x,0,z)-X^+f(x,0,z)}=\frac{-\vert \beta \vert x + \mathcal O(z(x+z))}{-\vert \beta \vert x-\vert b\vert z+\mathcal O((x+z)^2)}.\eqlab{lambda}
\end{eqnarray}
The denominator
\begin{eqnarray*}
X^-f(x,0,z)-X^+f(x,0,z)&=&-\vert \beta \vert x-\vert b\vert z+\mathcal O((x+z)^2)
\end{eqnarray*}
is positive within $\Sigma_{sl}^-$, provided $x$ and $z$ are sufficiently small. It is negative within $\Sigma_{sl}^+$ and only vanishes within $\overline{\Sigma}_{sl}$ on the two-fold singularity $p$. We can therefore multiply the sliding vector-field by $\vert X^-f(x,0,z)-X^+f(x,0,z)\vert$, corresponding to a time re-parametrisation, within $\Sigma_{sl}$. The sliding vector-field within $\Sigma_{sl}^{\mp}$ is then given by
 \begin{eqnarray}
  \dot x&=&\mp c x\mp \vert b\vert \text{sign}(\beta) z+\mathcal O((x+z)^2)\eqlab{slidingVectorField}\\
  \dot y&=&0,\nonumber\\
  \dot z&=&\mp \vert \beta\vert \text{sign}(b) x\mp \gamma z+\mathcal O((x+z)^2).\nonumber
 \end{eqnarray}
 Similar equations were derived in \cite[p. 279]{filippov1988differential}.
%
 As we will see, trajectories can go from the stable sliding region $\Sigma_{sl}^-$ to the unstable sliding region $\Sigma_{sl}^+$, or vice versa. Hence these trajectories resemble canards in slow-fast systems \cite{Benoit81}. However to emphasize that we are dealing with an underlying piecewise smooth system, which is not slow-fast, we introduce the concept of \textit{singular canards} in the following definition:
 \begin{definition}
 A singular canard is a trajectory of the sliding equations \eqref{slidingVectorField} which is contained within $\Sigma_{sl}$ having a smooth continuation through the two-fold singularity $p$.
 Following \cite{szmolyan_canards_2001}, we say that the singular canard is a \textit{primary} singular canard if it goes from the \textit{stable sliding region} $\Sigma_{sl}^-$ to the unstable sliding region $\Sigma_{sl}^+$ in forward time. If it goes from the \textit{unstable sliding region} $\Sigma_{sl}^+$ to the stable sliding region $\Sigma_{sl}^-$ then we say it is a \textit{faux} singular canard.
\end{definition}

We will elaborate on the relation with the canard literature in section \secref{comparison}.
To describe singular canards, we take the equations on $\Sigma_{sl}^-$ \eqref{slidingVectorField}:
\begin{eqnarray}
  \dot x&=&- c x- \vert b\vert \text{sign}(\beta) z+\mathcal O((x+z)^2)\eqlab{slidingVectorFieldNewTime}\\
  \dot z&=&-\vert \beta \vert \text{sign}(b) x- \gamma z+\mathcal O((x+z)^2),\nonumber
 \end{eqnarray}
and note that the orbits of \eqref{slidingVectorFieldNewTime} agree with those of \eqref{slidingVectorField} - one just has to reverse the direction of time within $\Sigma_{sl}^+$ for them to agree as trajectories. Then the two-fold singularity $p$ appears as an equilibrium of the equations \eqref{slidingVectorFieldNewTime}.
By identifying trajectories  of \eqref{slidingVectorFieldNewTime} with those of \eqref{slidingVectorField}, we obtain the following lemma:
 \begin{lemma}
 A primary/faux singular canard agrees as a set with a trajectory of \eqref{slidingVectorFieldNewTime} which is forward/backwards asymptotic to the two-fold singularity $p$. 
\end{lemma}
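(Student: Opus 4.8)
The plan is to read the lemma off the time-reparametrization that already relates \eqref{slidingVectorField} and \eqref{slidingVectorFieldNewTime}, together with the single new observation that $p$ is an equilibrium of \eqref{slidingVectorFieldNewTime}. First I would record the sign of the reparametrization factor. Recall that \eqref{slidingVectorField} was obtained from the Filippov field \eqref{slidingEqns} by multiplying by $\rho:=|X^-f(x,0,z)-X^+f(x,0,z)|$, and that $X^-f-X^+f=-|\beta|x-|b|z+\mathcal O((x+z)^2)$ is strictly positive on $\Sigma_{sl}^-$ and strictly negative on $\Sigma_{sl}^+$, vanishing within $\overline\Sigma_{sl}$ only at $p$. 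Hence on $\Sigma_{sl}^-$ the factor $\rho$ equals $X^-f-X^+f$, whereas on $\Sigma_{sl}^+$ it equals $-(X^-f-X^+f)$.

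Next I would compare the two fields region by region. By construction \eqref{slidingVectorFieldNewTime} is \eqref{slidingVectorField} written with the upper signs and then extended smoothly over all of $\{y=0\}$. Thus \eqref{slidingVectorFieldNewTime} coincides with \eqref{slidingVectorField} on $\Sigma_{sl}^-$ and equals minus \eqref{slidingVectorField} on $\Sigma_{sl}^+$; equivalently, the two systems have the same orbits as sets, with the time orientation reversed on $\Sigma_{sl}^+$. I would also note that the right-hand side of \eqref{slidingVectorFieldNewTime} vanishes at the origin, so $p$ is an equilibrium, and since $\rho\to 0$ at $p$ the orbits of \eqref{slidingVectorFieldNewTime} reach $p$ only asymptotically, in infinite time.

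With these two facts the lemma follows by tracking the direction of time. Let $\Gamma=\Gamma^-\cup\{p\}\cup\Gamma^+$ be a primary singular canard, with $\Gamma^{\mp}\subset\Sigma_{sl}^{\mp}$. Along $\Gamma^-$ the forward flow of \eqref{slidingVectorField} moves toward $p$; since \eqref{slidingVectorFieldNewTime} agrees with \eqref{slidingVectorField} there, $\Gamma^-$ is a forward orbit of \eqref{slidingVectorFieldNewTime} asymptotic to $p$. Along $\Gamma^+$ the forward flow of \eqref{slidingVectorField} moves away from $p$, but because \eqref{slidingVectorFieldNewTime} reverses time on $\Sigma_{sl}^+$, the same set $\Gamma^+$ is again a forward orbit of \eqref{slidingVectorFieldNewTime} asymptotic to $p$. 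The smooth continuation through $p$ forces $\Gamma^-$ and $\Gamma^+$ to share a tangent line at $p$, which is necessarily an eigendirection of the linearization of \eqref{slidingVectorFieldNewTime} at $p$; hence $\Gamma$ is a single smooth invariant curve of \eqref{slidingVectorFieldNewTime}, and as a set it is a trajectory forward asymptotic to $p$. The faux case is identical after reversing every time direction, producing a trajectory backward asymptotic to $p$.

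The main obstacle I anticipate is not hard analysis but careful bookkeeping: reconciling the finite-time passage of the canard through $p$ in the original Filippov dynamics with the infinite-time asymptotics under the desingularized field \eqref{slidingVectorFieldNewTime}, and making precise the sense in which the two half-orbits $\Gamma^{\pm}$ — which under \eqref{slidingVectorFieldNewTime} are both asymptotic to the equilibrium $p$ — combine, via the smooth-continuation hypothesis, into a single trajectory that agrees as a set with the canard.
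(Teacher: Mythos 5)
Your proposal is correct and follows essentially the same route as the paper: the paper states this lemma as an immediate consequence of its preceding observation that orbits of \eqref{slidingVectorFieldNewTime} agree with those of \eqref{slidingVectorField} up to a time reversal within $\Sigma_{sl}^+$, together with the fact that $p$ is an equilibrium of \eqref{slidingVectorFieldNewTime}, which is precisely the sign bookkeeping and asymptotic-approach argument you spell out. Your added care about the two half-orbits $\Gamma^{\pm}$ joining smoothly at the equilibrium is a faithful elaboration of what the paper leaves implicit, not a different method.
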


To study singular canards we therefore consider \eqref{slidingVectorFieldNewTime} and its linearization about $(x,z)=(0,0)$.
 \begin{proposition}\proplab{EqSlidingVectorField}
 The two-fold singularity $p$ is an equilibrium of \eqref{slidingVectorFieldNewTime} with associated eigenvalues:
 \begin{align}
  \lambda_{\pm} = -\frac12 (c+\gamma)\pm \frac12 \sqrt{(c-\gamma)^2 + 4b\beta}.\eqlab{lambdapm}
 \end{align}
The eigenvectors corresponding to $\lambda_{\pm}$ are spanned by
\begin{align}
v_{\pm} = \begin{pmatrix}1 \\
                                                                 -\chi_{\pm}
                                                               \end{pmatrix} \eqlab{vpm}
\end{align}
 where
                                                               \begin{align}
\chi_{\pm} = \frac{\text{sign}(\beta)}{2\vert b\vert}\left({c-\gamma}\pm \sqrt{(c-\gamma)^2+4b\beta}\right).\eqlab{chipm}
                                                               \end{align}
A singular canard corresponds to a trajectory tangent at the origin to an eigenvector with $\chi_{\pm}<0$. It is a primary/faux singular canard if the associated eigenvalue $\lambda_{\pm}$ is negative/positive. 
For  $(c-\gamma)^2+4b\beta>0$ we have the following three cases (S), (SN), and (N):  
 \begin{itemize}
  \item[(S)] For $0\le c+\gamma < \sqrt{(c-\gamma)^2+4b\beta}$ the equilibrium is a saddle with $\lambda_{-}<0<\lambda_+$.
  \item[(SN)] For $c+\gamma = \sqrt{(c-\gamma)^2+4b\beta}$ the equilibrium is a saddle-node with $\lambda_-<\lambda_+=0$.
   \item[(N)] For $c+\gamma > \sqrt{(c-\gamma)^2+4b\beta}$ the equilibrium is a stable node with $\lambda_{-}<\lambda_+ <0$. 
 \end{itemize}
 Given that $c-\gamma\ge 0$, the signs of $\chi_{\pm}$ are described by the following:
  \begin{itemize}
\item (visible) If $b>0$ and $\beta>0$ then $\chi_-<0<\chi_+$.
\item (visible-invisible) If $b>0$ and $\beta<0$ then $\chi_+<\chi_{-}<0$. If $b<0$ or $\beta>0$ then $0<\chi_-<\chi_{+}$. 
  \item (invisible) If $b<0$ and $\beta<0$ then $\chi_+<0<\chi_-$.
    \end{itemize}
 \end{proposition}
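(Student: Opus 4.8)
The plan is to reduce everything to the linearization of \eqref{slidingVectorFieldNewTime} at the origin and then to do careful sign bookkeeping. First I would read off the Jacobian
\[
A=\begin{pmatrix}-c & -\vert b\vert\,\text{sign}(\beta)\\ -\vert\beta\vert\,\text{sign}(b) & -\gamma\end{pmatrix},
\]
and note the key simplification that its off-diagonal product equals $\vert b\vert\,\vert\beta\vert\,\text{sign}(b)\,\text{sign}(\beta)=b\beta$, so that $\det A=c\gamma-b\beta$ and the trace is $-(c+\gamma)$. The characteristic equation $\lambda^2+(c+\gamma)\lambda+(c\gamma-b\beta)=0$ then gives \eqref{lambdapm} at once, using $(c+\gamma)^2-4c\gamma=(c-\gamma)^2$. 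For the eigenvectors I would substitute $v_\pm=(1,-\chi_\pm)^{\top}$ into the first row of $(A-\lambda_\pm I)v_\pm=0$, obtaining $\vert b\vert\,\text{sign}(\beta)\,\chi_\pm=c+\lambda_\pm$; since $\text{sign}(\beta)^{-1}=\text{sign}(\beta)$ and $c+\lambda_\pm=\frac12(c-\gamma)\pm\frac12\sqrt{(c-\gamma)^2+4b\beta}$, this is exactly \eqref{chipm}.

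Next I would establish the canard characterization. By \eqref{SigmaSliding} the sliding region is $\{xz>0\}$, and the line through the origin in the direction $v_\pm=(1,-\chi_\pm)$ consists of points $(t,-\chi_\pm t)$ whose coordinate product is $-\chi_\pm t^2$; hence this eigendirection lies in $\Sigma_{sl}$ precisely when $\chi_\pm<0$. By the stable/unstable (and, in the boundary case $\lambda_+=0$, centre) manifold theorem, the one-dimensional invariant curve of \eqref{slidingVectorFieldNewTime} through $p$ tangent to $v_\pm$ is a smooth orbit passing through $p$; when $\chi_\pm<0$ it lies in $\Sigma_{sl}^-$ on one side of $p$ and in $\Sigma_{sl}^+$ on the other, so it is a singular canard, and conversely any singular canard must limit onto $p$ tangent to an eigendirection. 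The sign of $\lambda_\pm$ decides whether this orbit is forward or backward asymptotic to $p$, so by the preceding Lemma it is primary when $\lambda_\pm<0$ and faux when $\lambda_\pm>0$.

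To obtain the trichotomy (S), (SN), (N) I would use the standing assumption $c+\gamma\ge0$ together with the reality hypothesis $(c-\gamma)^2+4b\beta>0$. Writing $\lambda_\pm=-\frac12(c+\gamma)\pm\frac12\sqrt{(c-\gamma)^2+4b\beta}$, the root $\lambda_-$ is always negative, while the sign of $\lambda_+$ is governed by comparing $c+\gamma$ with $\sqrt{(c-\gamma)^2+4b\beta}$: strictly smaller gives $\lambda_+>0$ (saddle), equality gives $\lambda_+=0$ (saddle-node), and strictly larger gives $\lambda_+<0$ (stable node), which is immediate.

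Finally, for the signs of $\chi_\pm$ I would set $s=c-\gamma\ge0$ and $D=\sqrt{s^2+4b\beta}>0$, so that $\chi_\pm=\frac{\text{sign}(\beta)}{2\vert b\vert}(s\pm D)$. The arithmetic hinges on the identity $(D-s)(D+s)=4b\beta$: since $D+s>0$ we get $\text{sign}(D-s)=\text{sign}(b\beta)$, hence $s+D>0$ always while $\text{sign}(s-D)=\text{sign}(b\beta)$. Combining these with the prefactor sign $\text{sign}(\beta)$ yields the claimed orderings in each visibility case by a short case check; for instance in the visible case $b,\beta>0$ the prefactor is positive, $s+D>0$ and $s-D<0$ give $\chi_-<0<\chi_+$, and the remaining cases follow in the same way. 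The main obstacle is not any single computation but the bookkeeping of the three sign factors $\text{sign}(b)$, $\text{sign}(\beta)$ and $\text{sign}(b\beta)$, together with the justification, via the invariant-manifold theorem, that the $\mathcal O(2)$ terms leave the tangency characterization of the canards intact.
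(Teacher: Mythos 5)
Your proposal is correct and follows essentially the same route as the paper's proof: read off the Jacobian $A$, use trace and determinant (with the off-diagonal product equal to $b\beta$) to get \eqref{lambdapm}, substitute $v_\pm=(1,-\chi_\pm)^{\top}$ into the first row of the eigenvector equation to get \eqref{chipm} (your relation $\vert b\vert\,\text{sign}(\beta)\chi_\pm=c+\lambda_\pm$ is in fact the correct one --- the paper's own proof has a sign typo there), and then carry out the sign bookkeeping for the trichotomy and for $\chi_\pm$, with the tangency/canard characterization handled by the stable/unstable manifold theorem exactly as the paper does. The only blemish is your clause ``$\text{sign}(s-D)=\text{sign}(b\beta)$'', which contradicts the correct identity $\text{sign}(D-s)=\text{sign}(b\beta)$ that you state immediately before it and then apply correctly in the visible case ($s-D<0$); fix that typo and the argument is complete.
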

 

\begin{proof}
 The coefficient matrix
 \begin{eqnarray}
 A= \begin{pmatrix}
   -c & -\vert b\vert \text{sign}(\beta)\\
   -\vert \beta \vert \text{sign}(b) &-\gamma
  \end{pmatrix}, \eqlab{AMatrix}
 \end{eqnarray}
has eigenvalues $\lambda_{\pm}$ \eqref{lambdapm}.
The curve $\sqrt{(c-\gamma)^2+4b\beta}= c+\gamma\ge 0$ separates parameter regions where the origin is a stable node from regions where it is a saddle (see \figref{ParameterEqType1}). 

Inserting \eqref{vpm} into the equation for the eigenvectors gives the following equation for $\chi_{\pm}$:
\begin{align}
-c+\vert b\vert \text{sign}(\beta)\chi_{\pm} = \lambda_{\pm},\eqlab{eqnchipm}
\end{align}
and therefore
\begin{align*}
\chi_{\pm} = \text{sign}(\beta)\vert b\vert^{-1} (c-\lambda_{\pm}).
\end{align*}
from which \eqref{chipm} follows. 
To verify the signs of $\chi_{\pm}$ in the proposition we use \eqref{chipm} to conclude that
\begin{align*}
 \chi_{\mp}\text{sign}(\beta) \gtrless 0,
\end{align*}
in the visible ($\beta>0$) and invisible ($\beta<0$) cases. In the visible-invisible case we have that
\begin{align*}
 \chi_{\pm}\text{sign}(\beta)> 0.
\end{align*}
since $c-\gamma>0$. 


\end{proof}
\begin{remark}\remlab{excludingCases}
 We exclude the case $(c-\gamma)^2 +4b\beta< 0$ for the visible-invisible two-fold, since this does not give rise to singular canards. 
{The case $(c-\gamma)^2+4b\beta=0$ is degenerate, and we shall not consider it further in this paper. }

{We shall also not consider the degenerate (SN) case further. It is similar to the folded saddle-node bifurcation in smooth slow-fast dynamical systems \cite{krupa_local_2010}. It seems likely that the techniques used there could be employed to study the regularization of this degeneracy.}
\end{remark}

\begin{figure}[h!] 
\begin{center}
{\includegraphics[width=.9\textwidth]{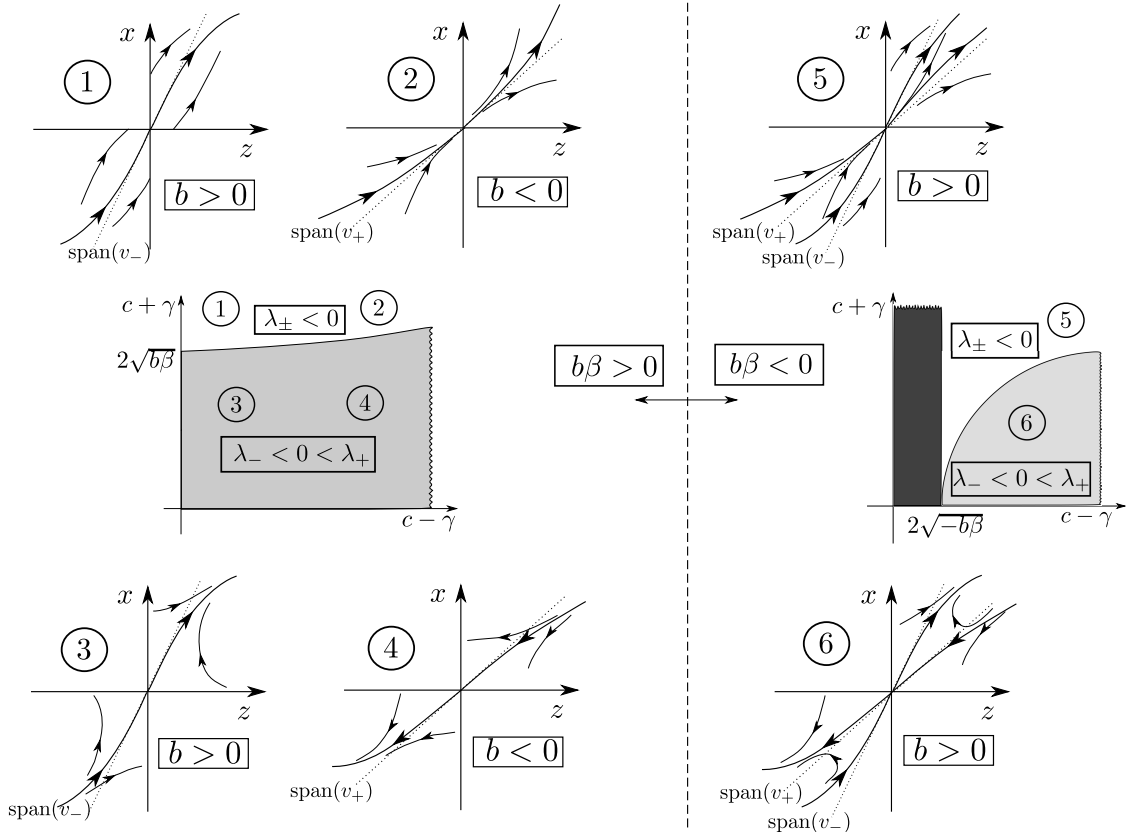}}
\end{center}
 \caption{The different types of two-fold singularity: $b\beta>0$ (to the left of the vertical dotted line) and $b\beta <0$ (to the right). In the embedded parameter diagrams, the two-fold singularity is a saddle in the grey regions, a stable node in the white regions and a focus in the black region (where does not give rise to canards). The phase portraits are described in \propref{singularCanards}. Similar diagrams are presented in \cite[Figs. 111 - 117]{filippov1988differential} and also in \cite{teixeira_bifurcations_1993}, but without the connection to canards.}
\figlab{ParameterEqType1}
\end{figure}
\begin{remark}
The constants $\chi_{\pm}$ also satisfy the following equation 
\begin{align}
\vert b\vert \text{sign}(\beta)\chi_{\pm}^2-(c-\gamma) \chi_{\pm}-\vert \beta \vert \text{sign}(b)=0,\eqlab{chieqn}
\end{align}
which shall appear later on.
\end{remark}

For case (N), it is clear from \propref{EqSlidingVectorField} that singular canards can {be either unique or non-unique}. 
In the classical canard literature \cite{brons-krupa-wechselberger2006:mixed-mode-oscil,szmolyan_canards_2001}, there is a notion of strong and weak canards,  which will also be useful here. 
\begin{definition}
 In case (N), if the singular canard (\propref{EqSlidingVectorField}) corresponds to a \textit{strong} (\textit{weak}) eigendirection
 then we call it a \textit{strong} (\textit{weak}) singular canard.  
 \end{definition}
 
 The following elementary lemma (compare with \figref{ParameterEqType1}) then describes the uniqueness of the singular canards:
\begin{lemma}\lemmalab{uniqueness}
\begin{itemize}
\item Case (S): All singular canards are unique. 
 \item Case (N):  Strong singular canards are unique whereas weak singular canards are non-unique. 
 \end{itemize}
\end{lemma}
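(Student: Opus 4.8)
The plan is to read everything off the phase portrait of the planar hyperbolic equilibrium $p$ of \eqref{slidingVectorFieldNewTime} established in \propref{EqSlidingVectorField}, using the preceding lemma: a singular canard coincides \emph{as a set} with an orbit of \eqref{slidingVectorFieldNewTime} that is forward/backward asymptotic to $p$ and tangent there to an eigenvector $v_\pm$ with $\chi_\pm<0$. Since in both (S) and (N) the equilibrium is hyperbolic (the (SN) and focus cases having been removed in \remref{excludingCases}), I would first invoke the (un)stable-manifold theorems to reduce the counting of such orbits to the linear structure of the matrix $A$ in \eqref{AMatrix}.

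In case (S) we have $\lambda_-<0<\lambda_+$, so $p$ is a saddle. Here I would use the stable- and unstable-manifold theorems: there is a unique one-dimensional stable manifold $W^s$ tangent to $v_-$, and a unique one-dimensional unstable manifold $W^u$ tangent to $v_+$. Any orbit asymptotic to $p$ and tangent to an eigendirection must lie on $W^s$ (forward asymptotic, $\lambda_-<0$, hence primary) or on $W^u$ (backward asymptotic, $\lambda_+>0$, hence faux). The sign information of \propref{EqSlidingVectorField} selects which eigendirection has $\chi_\pm<0$ and so enters $\Sigma_{sl}$; because each invariant manifold is unique, the corresponding singular canard is unique. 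In the visible-invisible subcase with $\chi_+<\chi_-<0$ both branches qualify, giving one primary and one faux canard, but each is still an individually unique invariant curve.

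In case (N) we have $\lambda_-<\lambda_+<0$, a stable node, with $v_-$ the strong and $v_+$ the weak eigendirection. For the strong canard I would appeal to the strong-stable-manifold theorem: there is a unique invariant curve $W^{ss}$ tangent to $v_-$ along which orbits decay at the fast rate $\sim e^{\lambda_- t}$, so the strong singular canard (which requires $\chi_-<0$) is unique. For the weak canard I would use the elementary node fact that every orbit off $W^{ss}$ approaches $p$ tangent to the weak eigendirection $v_+$; consequently, whenever $\chi_+<0$ there is a one-parameter family of orbits in $\Sigma_{sl}$ tangent to $v_+$ at $p$, each continuing smoothly through $p$, so the weak singular canard is non-unique.

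The main obstacle is not the linear count but verifying that the candidate orbits genuinely remain inside $\Sigma_{sl}=\{xz>0\}$ and continue smoothly through $p$, rather than escaping into the crossing region across the fold lines $l^\pm$. For the weak family this is exactly the ``sector of canards'' issue: one must check that the open sector of the node bounded by the two eigenlines lies in $\Sigma_{sl}$, which is controlled by the signs of $\chi_\pm$ from \eqref{chipm} (cf.\ \figref{ParameterEqType1}). I would settle this directly for the linearisation and then note that hyperbolicity lets the conclusion persist for the full nonlinear field, the careful nonlinear analysis of the weak sector being precisely what is carried out later in \secref{twoFold}.
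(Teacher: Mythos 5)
Your proposal is correct and follows essentially the same route the paper takes: the paper states this lemma as an elementary consequence of the phase portraits of the hyperbolic equilibrium $p$ of \eqref{slidingVectorFieldNewTime} (cf.\ \propref{EqSlidingVectorField} and \figref{ParameterEqType1}), and its proof of \propref{singularCanards} invokes precisely the stable-manifold theorem for case (S) and the strong/weak eigendirection dichotomy for the node in case (N), including your key verification that the relevant eigendirections (and the weak sector) lie in $\Sigma_{sl}\cup\{p\}$ via the signs of $\chi_{\pm}$.
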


We then conclude:
\begin{proposition}\proplab{singularCanards}
Assume that $b\beta\ne 0$ and $(c+\gamma)^2+4b\beta >0$. We then have the following six different hyperbolic cases:
\begin{itemize}
 \item Case (N) ($\lambda_-<\lambda_+<0$): 
 \begin{itemize}
 \item In the visible case, there exists one strong singular canard. It is a primary singular canard and it coincides as set with the unique trajectory that is tangent to $v_-$ ($\chi_-<0$) at the origin of \eqref{slidingVectorFieldNewTime}. 
 (Region \textnormal{\circled{1}} in \figref{ParameterEqType1}.)
 \item In the invisible case, $\Sigma_{sl}$ is filled with primary singular canards. They are all tangent to $v_+$ ($\chi_+<0$) at the origin of \eqref{slidingVectorFieldNewTime}.  (Region \textnormal{\circled{2}} in \figref{ParameterEqType1}.)
 \item In the visible-invisible case, there exists a sector of primary singular canards if $b>0$. The sector is restricted by the $x$-axis and the strong singular canard that is tangent to $v_{-}$ at the origin. All other trajectories are weak singular canards that are primary and tangent to $v_+$ at the origin.  (Region \textnormal{\circled{5}} in \figref{ParameterEqType1}.) If $b<0$ then there are no singular canards.
 \end{itemize}
 \item Case (S) ($\lambda_-<0<\lambda_+$): 
 \begin{itemize}
 \item In the visible case, there exists one singular canard. It is a primary singular canard and it coincides as a set with the stable manifold ($\lambda_-<0$) that is tangent to $v_-$ ($\chi_-<0$) at the origin of \eqref{slidingVectorFieldNewTime}. (Region \textnormal{\circled{3}} in \figref{ParameterEqType1}.)
 \item In the invisible case, there exists one singular canard. It is a faux singular canard and it coincides as a set with the unstable manifold ($\lambda_+>0$) that is tangent to $v_+$ ($\chi_+<0$) at the origin of \eqref{slidingVectorFieldNewTime}. (Region \textnormal{\circled{4}} in \figref{ParameterEqType1}.) 
 \item In the visible-invisible case, there exist one primary singular canard and one faux singular canard if $b>0$. They correspond as sets to the stable and unstable manifolds and are tangent to $v_-$ and $v_+$ respectively at the origin of \eqref{slidingVectorFieldNewTime}. (Region \textnormal{\circled{6}} in \figref{ParameterEqType1}.) If $b<0$ then there are no singular canards.
 \end{itemize}
\end{itemize}
\end{proposition}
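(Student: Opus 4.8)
The plan is to obtain all six assertions by combining the eigen--data of \propref{EqSlidingVectorField} with \lemmaref{uniqueness}, using the canard criterion already stated in \propref{EqSlidingVectorField}: a singular canard runs along the eigendirection $v_\pm$ exactly when $\chi_\pm<0$ (so that the corresponding eigenline lies in $\{xz>0\}=\Sigma_{sl}$), it is \emph{primary} when $\lambda_\pm<0$ and \emph{faux} when $\lambda_\pm>0$, and it is \emph{strong} or \emph{weak} according as it is tangent to $v_-$ or $v_+$. Hence, once the hyperbolic type \textnormal{(S)} or \textnormal{(N)} and the visibility type have been fixed, the number, nature, and tangency of the canards can simply be read off from the sign of $\lambda_\pm$ together with the sign pattern of $\chi_\pm$ recorded in \propref{EqSlidingVectorField}, while uniqueness follows from \lemmaref{uniqueness}.

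With this dictionary in hand, five of the six cases are immediate. For the visible and invisible nodes (Regions \textnormal{\circled{1}} and \textnormal{\circled{2}}) exactly one of $\chi_\pm$ is negative: in the visible case it is $\chi_-$, giving a single strong primary canard tangent to $v_-$, unique by \lemmaref{uniqueness}; in the invisible case it is $\chi_+$, so $v_-$ lies outside $\Sigma_{sl}$, every orbit approaches $p$ tangent to $v_+$, and the resulting weak primary canards are non--unique and fill $\Sigma_{sl}$. For the visible and invisible saddles (Regions \textnormal{\circled{3}} and \textnormal{\circled{4}}) again exactly one $\chi_\pm<0$, and the corresponding eigendirection is the stable manifold ($\lambda_-<0$, primary) or the unstable manifold ($\lambda_+>0$, faux); uniqueness is automatic for a saddle. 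For the visible--invisible saddle with $b>0$ (Region \textnormal{\circled{6}}) both $\chi_\pm<0$, but as the two eigenvalues have opposite sign the canards are precisely the stable and unstable manifolds, tangent to $v_-$ and $v_+$ respectively, and both are unique. Finally, whenever $b<0$ in the visible--invisible case \propref{EqSlidingVectorField} gives $0<\chi_-<\chi_+$, so neither eigenline meets $\Sigma_{sl}$ and no canards exist.

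The only case calling for genuine geometric work is the visible--invisible node with $b>0$ (Region \textnormal{\circled{5}}), where \propref{EqSlidingVectorField} yields $\chi_+<\chi_-<0$: both eigenlines lie in $\Sigma_{sl}$ and both eigenvalues are negative. Here I would study the phase portrait of the stable node \eqref{slidingVectorFieldNewTime}, with coefficient matrix \eqref{AMatrix}, restricted to $\Sigma_{sl}=\{xz>0\}$. The strong eigenline $v_-$ contributes the unique strong primary canard and forms one edge of the canard region; every other orbit approaches $p$ tangent to the weak direction $v_+$, producing a one--parameter family of weak primary canards. To locate the region they sweep out, I would pass to eigencoordinates $(u,w)$, in which $\dot u=\lambda_- u,\ \dot w=\lambda_+ w$, and decide exactly which weak orbits remain in $\{xz>0\}$ all the way to $p$ rather than crossing one of the fold lines \eqref{foldLines1} into $\Sigma_{cr}$. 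This shows that the weak canards fill the open sector bounded on one side by the strong canard $v_-$ and on the other by a fold line, and that the complementary orbits leave $\Sigma_{sl}$ through $\Sigma_{cr}$, so that $\Sigma_{sl}$ splits into the canard sector and a non--canard region as claimed.

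The main obstacle is exactly this last identification of the sector boundaries. The delicate point is that \emph{every} weak canard has the same tangent $v_+$ at $p$, so the sector is invisible at the level of the linear tangent data and must instead be traced through the invariant node foliation: one has to follow each weak leaf and determine at which fold line \eqref{foldLines1} it meets $\partial\Sigma_{sl}$. Once the bounding fold line is pinned down by the eigencoordinate computation, the statement for Region \textnormal{\circled{5}} follows, and with it the whole proposition, since every remaining assertion is a direct transcription of the signs of $\lambda_\pm$ and $\chi_\pm$ from \propref{EqSlidingVectorField} combined with \lemmaref{uniqueness}.
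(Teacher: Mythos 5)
Your proposal is correct and, for five of the six cases, it is essentially the paper's own argument: read off from \propref{EqSlidingVectorField} which eigenlines lie in $\Sigma_{sl}\cup\{p\}$ (the condition $\chi_\pm<0$), use the sign of $\lambda_\pm$ to classify primary versus faux, and invoke \lemmaref{uniqueness} together with the stable manifold theorem for uniqueness; the paper's proof does exactly this. Where you genuinely differ is the visible-invisible node (Region \circled{5}), which the paper's proof dispatches with the single phrase ``repeating the arguments for the visible and invisible cases above''. You rightly point out that this repetition is not automatic: in the pure invisible case all of $\Sigma_{sl}$ consists of canards because the flow enters $\Sigma_{sl}^-$ through both fold lines, whereas in the visible-invisible case orbits can escape through the visible fold before reaching $p$, so a genuine sector determination is needed --- a step the paper's proof skips. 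Your deferred eigencoordinate computation does close this gap, and carrying it out is short: by \eqref{slidingVectorFieldNewTime} with $b>0$, $\beta<0$, on $\{x<0,\,z=0\}$ one has $\dot z=\vert\beta\vert \vert x\vert>0$, so orbits cross $l^+$ outward, while on $\{x=0,\,z<0\}$ one has $\dot x=\vert b\vert z<0$, so orbits cross $l^-$ inward; since the boundary between escaping and non-escaping orbits must be invariant, and the sector between the two eigenlines is manifestly non-escaping, the escaping set is exactly the sector between the $x$-axis and the strong canard, and the weak primary canards fill the complementary sector between the strong canard and $l^-$ (the $z$-axis). In particular, the bounding fold line you left undetermined comes out as $l^-$, not $l^+$: the $x$-axis together with the strong canard delimits the \emph{non}-canard sector, which is the reading under which the proposition's final sentence ``all other trajectories are weak singular canards'' is consistent; neither your sketch nor the paper's one-line proof pins this down, but your method is the one that does.
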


\begin{proof}
For there to exist a singular canard associated with the eigendirections described in \propref{EqSlidingVectorField} the span of $v_{\pm}$ must be contained within $\Sigma_{sl}\cup \{p\}$.  For case (S), the statements in the proposition then follow from \propref{EqSlidingVectorField} and the stable manifold theorem. For case (N), in the visible case, only the span of $v_{-}$ is contained within $\Sigma_{sl}\cup \{p\}$. The vector $v_{-}$ corresponds to the strong eigendirection and there is therefore a unique trajectory tangent to $v_-$ at the origin. For case (N), in the invisible case, it is only the span of $v_{+}$ which is contained within $\Sigma_{sl}\cup \{p\}$ and the vector $v_{+}$ corresponds to the weak eigendirection. Therefore all orbits within $\Sigma_{sl}$ are tangent to $v_+$ at the origin and they are all primary singular canards. 
%

For both cases (N) and (S) in the visible-invisible case with $b\beta<0$, we have that both $v_{\pm}$ are contained within $\Sigma_{sl}\cup \{p\}$ if $b>0$. If $b<0$ then neither of the vectors are contained within $\Sigma_{sl}\cup \{p\}$. Repeating the arguments for the visible and invisible cases above completes the proof of the statements about the visible-invisible case in both cases (N) and (S).
\end{proof}

\section{Regularization}\seclab{regularize}

It is natural to ask how the canards seen in the two-fold singularity $p=(0,0,0)$ can survive regularization. There is a number of ways that the original piecewise smooth system vector field $X=(X^-,X^+)$ can be regularized. We follow the approach of Sotomayor and Teixeira \cite{Sotomayor96}. We define a $C^k$-function ($1\le k\le \infty$) $\phi(y)$ which satisfies:
\begin{eqnarray}
 \phi(y)=\left\{\begin{array}{cc}
                 1 & \text{for}\quad y\ge 1,\\
                 \in (-1,1)& \text{for}\quad y\in (-1,1),\\
                 -1 & \text{for}\quad y\le -1,\\
                \end{array}\right.\eqlab{phiFunc}
 \end{eqnarray}
 where 
 \begin{align}
 \phi'(y)>0 \quad \text{within}\quad y\in (-1,1).\eqlab{phiProperties}
 \end{align}
 The regularized vector-field $X_\epsilon ({\bf x})$ is then given by
\begin{eqnarray}
 X_\epsilon ({\bf x}) =
 \frac12 X^+({\bf x})(1+\phi(\epsilon^{-1} y)) +\frac12 X^-({\bf x})(1-\phi(\epsilon^{-1}y)).\eqlab{Xeps}
\end{eqnarray}
Note that $X_\epsilon({\bf x})=X^{\pm}({\bf x})$ for $y\gtrless \pm \epsilon$. The region $y\in (-\epsilon,\epsilon)$ is the \textit{region of regularization}. By re-scaling 
\begin{align}
\tilde y=\epsilon^{-1} y\eqlab{yTilde}
\end{align}
the region of regularization becomes $\tilde y \in (-1,1)$. We now drop the tilde and re-scale time according to $\tau = 2 \epsilon^{-1}t$ to obtain the following set of equations {from \eqref{Xeps}}:
\begin{eqnarray}
x' &=&\epsilon \left(X_1^+(x,0,z)(1+\phi(y))+X_1^-(x,0,z)(1-\phi(y))+O(\epsilon)\right)\eqlab{slowFastEquations1}\\
&=&\epsilon \left((\vert \beta\vert^{-1} c+\mathcal O(x+z))(1+\phi(y))+(\text{sign}(\beta)+\mathcal O(x+z))(1-\phi(y))+O(\epsilon)\right),\nonumber\\
y'&=&X_2^+(x,0,z) (1+\phi(y)) +X_2^-(x,0,z)(1-\phi(y)) +O(\epsilon)\nonumber\\
&=&(\vert b\vert+\mathcal O(x+z)) {z} (1+\phi(y)) +(-\vert \beta\vert +\mathcal O(x+z))x(1-\phi(y)) +O(\epsilon),\nonumber\\
 z'&=&\epsilon \left(X_3^+(x,0,z) (1+\phi(y))+X_3^-(x,0,z)(1-\phi(y))+O(\epsilon)\right)\nonumber\\
 &=&\epsilon \left((\text{sign}(b)+\mathcal O(x+z)) (1+\phi(y))+(\vert b\vert^{-1} \gamma+\mathcal O(x+z))(1-\phi(y))+O(\epsilon)\right),\nonumber
\end{eqnarray}
with $()'=\frac{d}{d\tau}$. This is a slow-fast system. {The $y$ variable is fast with $\mathcal O(1)$ velocities whereas $x$ and $z$ are slow variables with $\mathcal O(\epsilon)$ velocities. Time $\tau$ is the \textit{fast time} and time $t$ is the \textit{slow time}. In this paper, we apply and extend Fenichel's theory of singular perturbations to study these regularized equations \eqref{slowFastEquations1}. Fenichel's theory allows us to go from a description of the singular limit $\epsilon=0$ to a description for $\epsilon>0$. This approach has the advantage that it is geometric. So, for example, we are able to solve persistence problems by invoking transversality. In this paper, it will also be important to identify the singular limit $\epsilon=0$ with the original piecewise smooth system. }

The key to the subsequent analysis in this paper is the following result ({a similar result is given in \cite[Theorem 1.1]{llibre_sliding_2008}, in a slightly different form}):
\begin{theorem}\thmlab{criticalManifold}
 There exists a critical manifold 
 \begin{align}
 S_0:\quad &y=h_0(x,z),\,\quad \text{for}\quad (x,z)\ne 0,\eqlab{h0Function}\\
 &\mbox{{$y\in [-1,1],\quad \,\,\,\,\, \text{for}\quad (x,z)=0,\nonumber$}}
 \end{align}
 of \eqref{slowFastEquations1} for $\epsilon=0$. On the critical manifold the motion of the slow variables $x$ and $z$ is described by reduced equations which coincide with the sliding equations \eqref{slidingEqns}. 
\end{theorem}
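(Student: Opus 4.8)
The plan is to treat \eqref{slowFastEquations1} as a standard fast--slow system and to identify the critical manifold as the equilibrium set of the layer (fast) equation at $\epsilon=0$. Writing $G^{\pm}(x,z):=X_2^{\pm}(x,0,z)$, the $\epsilon=0$ layer equation is $y'=G^+(x,z)\bigl(1+\phi(y)\bigr)+G^-(x,z)\bigl(1-\phi(y)\bigr)$, and $S_0$ is precisely its zero set for frozen slow variables $(x,z)$. First I would rearrange the zero condition into $\phi(y)=-\frac{G^++G^-}{G^+-G^-}$, valid wherever $G^+-G^-\neq0$. From the normal form one reads off $G^+(x,z)=(|b|+\mathcal O(x+z))z$ and $G^-(x,z)=(-|\beta|+\mathcal O(x+z))x$, so on the sliding region $\Sigma_{sl}$ (where $xz>0$, see \eqref{SigmaSliding}) the quantities $G^+$ and $G^-$ have opposite signs. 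Consequently $|G^++G^-|<|G^+-G^-|$, so the right-hand side lies strictly inside $(-1,1)$.

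Second, I would invert $\phi$. By \eqref{phiFunc} and \eqref{phiProperties}, $\phi$ restricts to a $C^k$, strictly increasing bijection of $(-1,1)$ onto $(-1,1)$ with $\phi'>0$, so there is a unique $y=h_0(x,z)\in(-1,1)$ solving the zero condition, and $h_0=\phi^{-1}\!\left(-\frac{G^++G^-}{G^+-G^-}\right)$ is $C^k$ on $\Sigma_{sl}$ by the chain rule, the argument being smooth and $G^+-G^-$ nonvanishing there. This yields the graph \eqref{h0Function}. Since $h_0\in(-1,1)$ strictly, one also obtains $\partial_y y'=(G^+-G^-)\phi'(h_0)\neq0$ along $S_0$, so the manifold is normally hyperbolic away from the origin; I would record this as it is what later licenses Fenichel theory. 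I would also remark that over the crossing region $xz<0$ the two terms $G^{\pm}$ share a sign, the argument of $\phi^{-1}$ leaves $(-1,1)$, and no critical point exists, so $S_0$ lives exactly over $\Sigma_{sl}$.

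For the reduced dynamics I would pass to the slow time, divide the $x'$ and $z'$ equations of \eqref{slowFastEquations1} by $\epsilon$, set $\epsilon=0$, and substitute $y=h_0(x,z)$. The weights become $\tfrac12(1+\phi(h_0))$ and $\tfrac12(1-\phi(h_0))$, and the key algebraic identity is $\tfrac12\bigl(1+\phi(h_0)\bigr)=\tfrac12\left(1-\frac{G^++G^-}{G^+-G^-}\right)=\frac{-G^-}{G^+-G^-}=\frac{G^-}{G^--G^+}=\sigma$, with $\sigma$ as in \eqref{lambdaSliding} (recall $X^{\pm}f=G^{\pm}$). Hence $\tfrac12(1-\phi(h_0))=1-\sigma$, and the reduced equations coincide with the sliding equations \eqref{slidingEqns}, up to the overall positive time rescaling inherited from the construction, which does not affect orbits.

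The main obstacle is conceptual rather than computational: it is the breakdown of the graph representation at the two-fold point $p$, where $G^+=G^-=0$. There the layer equation reads $y'\equiv0$ for every $y$, so the entire fast fibre $\{(0,y,0):y\in[-1,1]\}$ consists of equilibria, giving the second line of \eqref{h0Function}; moreover $-\frac{G^++G^-}{G^+-G^-}=-\frac{|b|z-|\beta|x+\cdots}{|b|z+|\beta|x+\cdots}$ depends on the direction of approach, so distinct rays into $\Sigma_{sl}$ limit to distinct points of the segment and $h_0$ admits no continuous extension to $p$. Establishing the graph and the matching of the reduced and sliding flows on $\Sigma_{sl}$ is routine once the sign condition $G^+G^-<0$ is in hand; it is precisely this loss of normal hyperbolicity at $p$, where the manifold fattens into a whole fibre, that remains unresolved here and that motivates the blow-up analysis of the later sections.
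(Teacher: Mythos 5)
Your proposal is correct and follows essentially the same route as the paper's own proof: solve the layer equation's equilibrium condition for $\phi(y)=-\frac{X_2^++X_2^-}{X_2^+-X_2^-}$, use the sign condition $X_2^+X_2^-<0$ on $\Sigma_{sl}$ together with the strict monotonicity of $\phi$ on $(-1,1)$ to invert and obtain $y=h_0(x,z)$, and then identify $\tfrac12(1\pm\phi(h_0))$ with $\sigma$ and $1-\sigma$ so that the reduced flow coincides with the sliding equations \eqref{slidingEqns}. Your added observations (emptiness of the equilibrium set over the crossing region, normal hyperbolicity away from the folds, and the fattening of the fibre at $p$) are consistent with, and slightly more explicit than, the paper's treatment.
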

\begin{proof}
The critical manifold $S_0$ is the set of equilibria of \eqref{slowFastEquations1}$\vert_{\epsilon=0}$. {For $x=0,\,z=0$ we obtain $y\in [-1,1]$. Otherwise the set of equilibria can be described by the following equation}
\begin{align*}
 \frac{1-\phi(y)}{1+\phi(y)} = -\frac{X_2^+(x,0,z)}{X_2^-(x,0,z)},
\end{align*}
and so
\begin{eqnarray}
 \phi(y) =-\frac{X_2^+(x,0,z)+X_2^-(x,0,z)}{X_2^+(x,0,z)-X_2^-(x,0,z)}=-\frac{(\vert b\vert+\mathcal O(x+z)) {z} +(-\vert \beta\vert +\mathcal O(x+z))x}{(\vert b\vert +\mathcal O(x+z))z-(-\vert \beta \vert +\mathcal O(x+z))x}. \eqlab{slowManifold}
\end{eqnarray}
Hence
\begin{align}
 1+\phi(y) &= 2\sigma,\eqlab{phiyAndLambda}\\
 1-\phi(y) &=2(1-\sigma),\nonumber
\end{align}
where $\sigma$ is given by \eqref{lambda}. {When $(x,0,z)\in \Sigma_{sl}$ then $\phi(y)\in (-1,1)$ according to \eqref{slowManifold}. Now since $\phi'(y)\ne 0$ for $\phi(y)\in (-1,1)$ the equation \eqref{slowManifold} can be solved for $y$ giving rise to the function $h_0=h_0(x,z)$ in \eqref{h0Function}. The function $h_0$ has a smooth extension onto $(x,0,z)\in \overline{\Sigma}_{sl}\backslash \{p\}$.}

For the second part of the theorem, we insert \eqref{phiyAndLambda} into \eqref{slowFastEquations1}, undo the rescaling of time and return to the original slow time in \eqref{Xeps}, and then set $\epsilon=0$. Then we obtain the reduced problem for $x$ and $z$:
\begin{align}
 \dot x &=\sigma X_1^+(x,0,z)+(1-\sigma) X_1^-(x,0,z),\eqlab{layerEqns}\\
 \dot z&=\sigma X_3^+(x,0,z)+(1-\sigma) X_3^-(x,0,z).\nonumber
\end{align}
These equations coincide with the sliding equations in \eqref{slidingEqns}.
\end{proof}
\begin{remark}\remlab{S0vsSigmaSl}
 If we return to our original $y$ variable using \eqref{yTilde} then the critical manifold $S_0$ becomes a graph $y=\epsilon h(x,z)$ within $(x,z)\ne 0$. For $\epsilon=0$ the whole of $S_0$ therefore collapses to $\overline{\Sigma}_{sl}:\,y=0$.
\end{remark}
\subsection{Fenichel's theory and the singular canards revisited}
Since the reduced equations \eqref{layerEqns} coincide with the sliding equations \eqref{slidingEqns}, the analysis in section \secref{slidingVectorField} can be carried over to the singular limit of the regularized system \eqref{Xeps}. In particular, we obtain singular canards on the critical manifold $S_0$ (which collapses to $\Sigma_{sl}$ according to \remref{S0vsSigmaSl}). The main focus of the paper is to investigate the fate of the singular canards for small $\epsilon$.

 {Points of $S_0$ are equilibria of the system \eqref{slowFastEquations1} for $\epsilon=0$. The property of normally hyperbolicity, on which Fenichel's geometric singular perturbation theory rests, relates to properties of these equilibria. }

\begin{definition}\defnlab{hyperbolicityDefinition}
{The critical manifold $S_0$ is \textnormal{normally hyperbolic} at $\textbf{x}\in S_0$ if the linearization about the equilibrium $\textbf{x}$ of \eqref{slowFastEquations1}$\vert_{\epsilon=0}$ has as many eigenvalues with zero real part as there are slow variables.  }
\end{definition}

{Fenichel's theory \cite{fen1,fen2,fen3,jones_1995} shows that the compact normally hyperbolic parts of $S_0$ perturb to invariant slow manifolds, diffeomorphic and $\mathcal O(\epsilon)$-close to the critical manifolds, for $\epsilon$ sufficiently small. 
By linearizing about an equilibrium $\textbf{x}=(x,y,z)\in S_0$ for our limiting system \eqref{slowFastEquations1}$\vert_{\epsilon=0}$ we obtain the following condition for normal hyperbolicity
\begin{eqnarray}
 \phi'(y)(\vert b\vert z+\vert \beta \vert x+\mathcal O((x+z)^2))\ne 0.\eqlab{hyperbolicity}
\end{eqnarray}
Where this condition is violated, the linearization of an equilibrium of \eqref{slowFastEquations1}$\vert_{\epsilon=0}$ will have three zero eigenvalues which therefore exceeds the number of slow variables. 
}

Setting $x=0$ in \eqref{slowManifold} gives $S_0\cap \{x=0,z\ne 0\}$ as
\begin{eqnarray}
 \phi(y) &=& -1,\eqlab{phiyEqNeg1}
\end{eqnarray}
for $z\ne 0$.
On the other hand, setting $z=0$, gives $S_0\cap \{x\ne 0,z=0\}$ as
\begin{eqnarray}
 \phi(y) &=& 1,\eqlab{phiyEqPos1}
\end{eqnarray}
for $x\ne 0$. 
Since $\phi$ is at least $C^1$ we conclude that $\phi'(y)=0$ on the lines 
\begin{align}
\tilde l^-=\{(x,y,z)\vert x=0,\,y=-1,\,z\ne 0\},\quad \tilde l^+ =\{(x,y,z)\vert x\ne 0,y=1,\,z=0\}.\eqlab{lines}
\end{align}
Using \eqref{hyperbolicity} we therefore conclude the following:
\begin{proposition}\proplab{nonHyperbolicLines}
 The fold lines $l^-$ and $l^+$ in the original piecewise smooth system give rise to lines $\tilde l^-$ and $\tilde l^+$ \eqref{lines}, respectively, of non-hyperbolic points on the critical manifold $S_0$. The two-fold singularity $p=(0,0,0)$ in the original piecewise smooth system gives rise to a non-hyperbolic line 
 \begin{align}
 \tilde p:\,y\in [-1,1],\,x=0,\,z=0,\,\epsilon=0,\eqlab{tildep}
 \end{align}
 in the extended phase space $(x,y,z,\epsilon)$.
\end{proposition}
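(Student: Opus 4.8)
The plan is to read off non-hyperbolicity directly from the criterion \eqref{hyperbolicity}. For the layer problem \eqref{slowFastEquations1}$\vert_{\epsilon=0}$ the slow variables $x$ and $z$ are frozen, so the linearization about any equilibrium $\textbf{x}\in S_0$ has two trivial zero eigenvalues along the frozen directions together with a single normal eigenvalue equal to $\phi'(y)(\vert b\vert z+\vert \beta\vert x+\mathcal O((x+z)^2))$. By \defnref{hyperbolicityDefinition}, normal hyperbolicity holds exactly when this eigenvalue is nonzero, i.e. when \eqref{hyperbolicity} holds, so the proposition reduces to locating the three loci on which one of the two factors in \eqref{hyperbolicity} vanishes.

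First I would treat the fold lines. To see that $\tilde l^{\pm}$ lie on $S_0$, I would use \eqref{phiyEqNeg1} and \eqref{phiyEqPos1}: putting $x=0$ (respectively $z=0$) in the defining relation \eqref{slowManifold} forces $\phi(y)=-1$ (respectively $\phi(y)=+1$). Combined with the smooth extension of $h_0$ onto $\overline{\Sigma}_{sl}\backslash\{p\}$ provided by \thmref{criticalManifold}, this shows that the extension attains the value $y=-1$ along $l^-$ (where $x=0$, $z\ne 0$) and the value $y=+1$ along $l^+$ (where $z=0$, $x\ne 0$); these boundary curves of $S_0$ are precisely $\tilde l^-$ and $\tilde l^+$ of \eqref{lines}. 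On them $\phi(y)=\pm 1$, and since $\phi$ equals the constant $+1$ on $y\ge 1$ and the constant $-1$ on $y\le -1$ by \eqref{phiFunc} while being at least $C^1$, its derivative must vanish at the endpoints: $\phi'(\pm 1)=0$. Hence the first factor of \eqref{hyperbolicity} vanishes along $\tilde l^{\pm}$ and normal hyperbolicity fails there.

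Next I would treat the two-fold. By \thmref{criticalManifold}, the single point $p=(0,0,0)$ does not correspond to a point of $S_0$ but to the whole segment $x=z=0$, $y\in[-1,1]$; adjoining the layer parameter yields the line $\tilde p$ of \eqref{tildep} in the extended phase space $(x,y,z,\epsilon)$ at $\epsilon=0$. On $\tilde p$ we have $x=z=0$, so the second factor $\vert b\vert z+\vert \beta\vert x+\mathcal O((x+z)^2)$ in \eqref{hyperbolicity} vanishes identically, regardless of the value of $\phi'(y)$; in fact all three eigenvalues of the linearization then vanish. Every point of $\tilde p$ is therefore non-hyperbolic.

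The only genuinely delicate step is justifying that $\tilde l^{\pm}$ belong to $S_0$ rather than being mere limits. The graph $y=h_0(x,z)$ is a priori defined only on the open sliding region, where $\phi(y)\in(-1,1)$ and $\phi'(y)\ne 0$ make \eqref{slowManifold} invertible; at $l^{\pm}$ the right-hand side of \eqref{slowManifold} reaches the endpoints $\pm 1$ and this inversion degenerates. I would close this gap by appealing to the smooth extension of $h_0$ to $\overline{\Sigma}_{sl}\backslash\{p\}$ asserted in \thmref{criticalManifold} and verifying that it takes the boundary values $y=-1$ on $l^-$ and $y=+1$ on $l^+$, so that $\tilde l^{\pm}\subset S_0$. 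Once this is established, both remaining claims follow by direct substitution into \eqref{hyperbolicity}.
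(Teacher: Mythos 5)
Your proof is correct and takes essentially the same route as the paper: the paper obtains the proposition directly from the hyperbolicity criterion \eqref{hyperbolicity}, from \eqsref{phiyEqNeg1}{phiyEqPos1} (setting $x=0$, respectively $z=0$, in \eqref{slowManifold}), and from the fact that $C^1$-smoothness of $\phi$ together with $\phi\equiv\pm 1$ outside $(-1,1)$ forces $\phi'(\pm 1)=0$, while the line $\tilde p$ is non-hyperbolic because the factor $\vert b\vert z+\vert\beta\vert x+\mathcal O((x+z)^2)$ vanishes at $x=z=0$, giving three zero eigenvalues. Your added care in verifying that $\tilde l^{\pm}\subset S_0$ via the smooth extension of $h_0$ to $\overline{\Sigma}_{sl}\backslash\{p\}$ only makes explicit a point the paper leaves implicit.
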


Therefore Fenichel's theory cannot be used to explain how we leave the sliding region, in particular the fate of singular canards for $\epsilon\ne 0$. 

Fenichel's theory applies away from the fold lines $\tilde l^-$ and $\tilde l^+$ of \eqref{lines} and the line $\tilde p$ \eqref{tildep}. In particular we know that the perturbed invariant manifolds inherit the stability of $S_0$, which is determined by the sign of \eqref{hyperbolicity}. Our critical manifold $S_0$ divides into an attracting part: $S_{a}:\,x<0,z<0$, a repelling part $S_r:\,x>0,z>0$, the two fold lines \eqref{lines} and the line $\tilde p$ \eqref{tildep} so that: 
$$\mbox{{$S_0=S_a\cup S_r\cup \tilde l^- \cup \tilde l^+\cup \tilde p$}}.$$ 
In the following proposition, we collect the results of the application of Fenichel's theory \cite{fen1,fen2,fen3,jones_1995}:
\begin{proposition}\proplab{fenichelProposition}
 Let $\mathcal U^-\subset \{(x,z)\}$ and $\mathcal U^+\subset \{(x,z)\}$ be compact regions completely contained within the fourth ($x<0,z<0$) and first quadrants ($x>0,z>0$), respectively. Then the critical manifolds 
 \begin{align*}
 S_a\vert_{\mathcal U^-}:&\quad y=h_0(x,z),\,(x,z)\in \mathcal U^-,\\
 S_r\vert_{\mathcal U^+}:&\quad y=h_0(x,z),\,(x,z)\in \mathcal U^+,
 \end{align*}
 perturb to invariant slow manifolds 
 \begin{align*}
 S_{a,\epsilon}:&\quad y=h_\epsilon(x,z,\epsilon)=h_0(x,z)+\mathcal O(\epsilon),\,(x,z)\in \mathcal U^-,\\
 S_{r,\epsilon}:&\quad y=h_\epsilon(x,z,\epsilon)=h_0(x,z)+\mathcal O(\epsilon),\,(x,z)\in \mathcal U^+
 \end{align*}
 for $\epsilon\le \epsilon_0(\mathcal U^-,\mathcal U^+)$ sufficiently small. In general, $S_{a,\epsilon}$ and $S_{r,\epsilon}$ are non-unique and they are all $\mathcal O(e^{-c/\epsilon})$-close for some $c>0$ independent of $\epsilon$. The flow on $S_{a,\epsilon}$ and $S_{r,\epsilon}$ is $\epsilon$-close to the flow of the reduced problem \eqref{layerEqns}.
\end{proposition}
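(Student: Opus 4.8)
The plan is to verify that the hypotheses of Fenichel's theorem hold uniformly on the compact pieces $S_a\vert_{\mathcal U^-}$ and $S_r\vert_{\mathcal U^+}$ and then to quote the relevant conclusions directly. Nearly all the groundwork is already done: \thmref{criticalManifold} produces $S_0$ as a graph $y=h_0(x,z)$ together with the normal-hyperbolicity quantity \eqref{hyperbolicity}, and \propref{nonHyperbolicLines} locates precisely where hyperbolicity fails, namely on $\tilde l^\pm$ and $\tilde p$. Since $\mathcal U^-$ and $\mathcal U^+$ are compact and lie strictly in the interiors of the fourth and first quadrants, they stay a fixed positive distance from the axes, hence from $\tilde l^\pm$ and $\tilde p$. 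Consequently the factor $\vert b\vert z+\vert\beta\vert x+\mathcal O((x+z)^2)$ is bounded away from zero there; moreover, by the construction in \thmref{criticalManifold}, $\phi(h_0(x,z))$ stays bounded away from $\pm 1$ on these sets, so $y$ ranges over a compact subinterval of $(-1,1)$ on which $\phi'>0$ is bounded below by a positive constant (using \eqref{phiProperties}). Thus \eqref{hyperbolicity} is uniformly bounded away from zero and $S_a\vert_{\mathcal U^-}$, $S_r\vert_{\mathcal U^+}$ are compact normally hyperbolic manifolds.

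Next I would pin down the eigenvalue structure to confirm the attracting and repelling labels. Linearizing \eqref{slowFastEquations1}$\vert_{\epsilon=0}$ about a point of $S_0$, the two slow directions contribute zero eigenvalues, since the $x'$ and $z'$ equations vanish identically at $\epsilon=0$; this matches the two slow variables required by \defnref{hyperbolicityDefinition}. The single remaining eigenvalue is the fast one, $\partial_y y'=\phi'(y)(\vert b\vert z+\vert\beta\vert x+\mathcal O((x+z)^2))$, i.e.\ exactly the left-hand side of \eqref{hyperbolicity}. On $\mathcal U^-$ (where $x<0,z<0$) this is negative, so $S_a$ is attracting; on $\mathcal U^+$ (where $x>0,z>0$) it is positive, so $S_r$ is repelling, consistent with the names.

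With normal hyperbolicity and the eigenvalue signs in hand, I would invoke Fenichel's theorem \cite{fen1,fen2,fen3,jones_1995}. It provides, for each compact region and for $\epsilon\le \epsilon_0(\mathcal U^-,\mathcal U^+)$, locally invariant slow manifolds $S_{a,\epsilon}$ and $S_{r,\epsilon}$ that are graphs $y=h_\epsilon(x,z,\epsilon)$, are $\mathcal O(\epsilon)$-close to $S_0$ (whence $h_\epsilon=h_0+\mathcal O(\epsilon)$), and carry a flow $\mathcal O(\epsilon)$-close to the reduced flow \eqref{layerEqns}, which coincides with the sliding equations by \thmref{criticalManifold}. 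The non-uniqueness and $\mathcal O(e^{-c/\epsilon})$-closeness of admissible choices is the standard consequence of the spectral gap: because trajectories leave the compact region in time $\mathcal O(1)$ while being contracted or expanded in the fast direction at an exponential rate, any two local slow manifolds differ only by an error decaying like $e^{-c/\epsilon}$, with $c>0$ controlled by the uniform lower bound on $\vert\phi'(y)(\vert b\vert z+\vert\beta\vert x)\vert$ from the first paragraph.

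There is no genuine obstacle here; the proposition is essentially a bookkeeping application of Fenichel theory once \thmref{criticalManifold} and \propref{nonHyperbolicLines} are available. The only point requiring real care is the \emph{uniformity} of the hyperbolicity estimate, namely that the lower bound on \eqref{hyperbolicity}, and hence the single threshold $\epsilon_0(\mathcal U^-,\mathcal U^+)$ and the rate $c$, depend only on the compact sets $\mathcal U^\pm$ and not on individual points. This follows from compactness together with the observation that $y=h_0(x,z)$ stays in a compact subinterval of $(-1,1)$, and it should be stated explicitly so that the single $\epsilon_0$ and $c$ appearing in the statement are justified.
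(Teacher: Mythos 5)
Your proposal is correct and follows exactly the route the paper intends: the paper gives no explicit proof of \propref{fenichelProposition}, stating it simply as a collection of consequences of Fenichel's theory once normal hyperbolicity away from $\tilde l^\pm$ and $\tilde p$ has been established via \eqref{hyperbolicity} and \propref{nonHyperbolicLines}. Your write-up supplies precisely the bookkeeping the paper leaves implicit (uniform lower bounds on the compact sets $\mathcal U^\pm$, the sign of the fast eigenvalue giving the attracting/repelling labels, and the standard spectral-gap argument for the $\mathcal O(e^{-c/\epsilon})$ non-uniqueness), and it contains no gaps.
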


%
%
We illustrate the application of Fenichel's theory in \figref{SaSr2}. 




\begin{figure}[h!] 
\begin{center}
\subfigure[$\epsilon=0$]{\includegraphics[width=.495\textwidth]{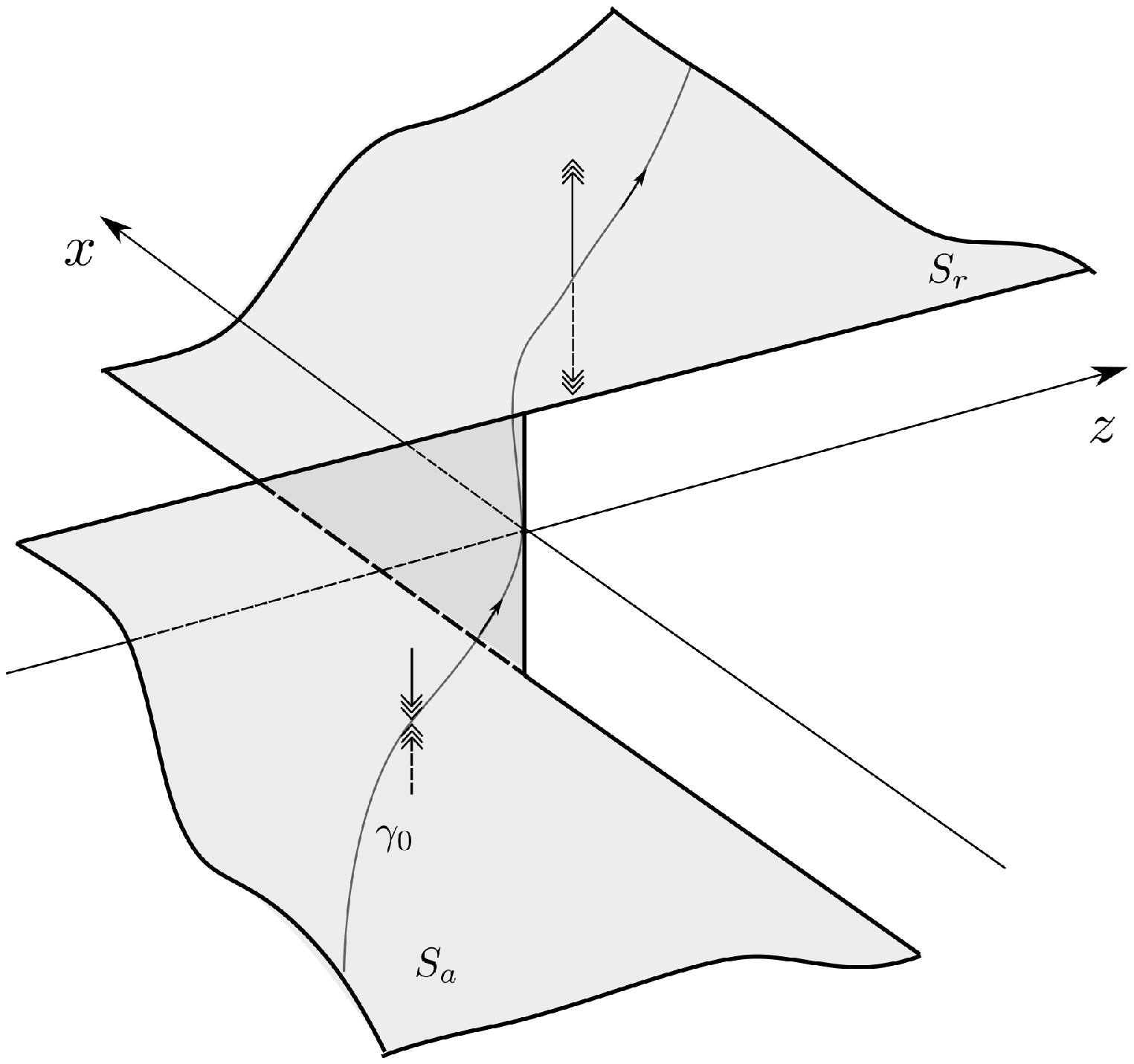}}
\subfigure[$\epsilon\ne 0$]{\includegraphics[width=.495\textwidth]{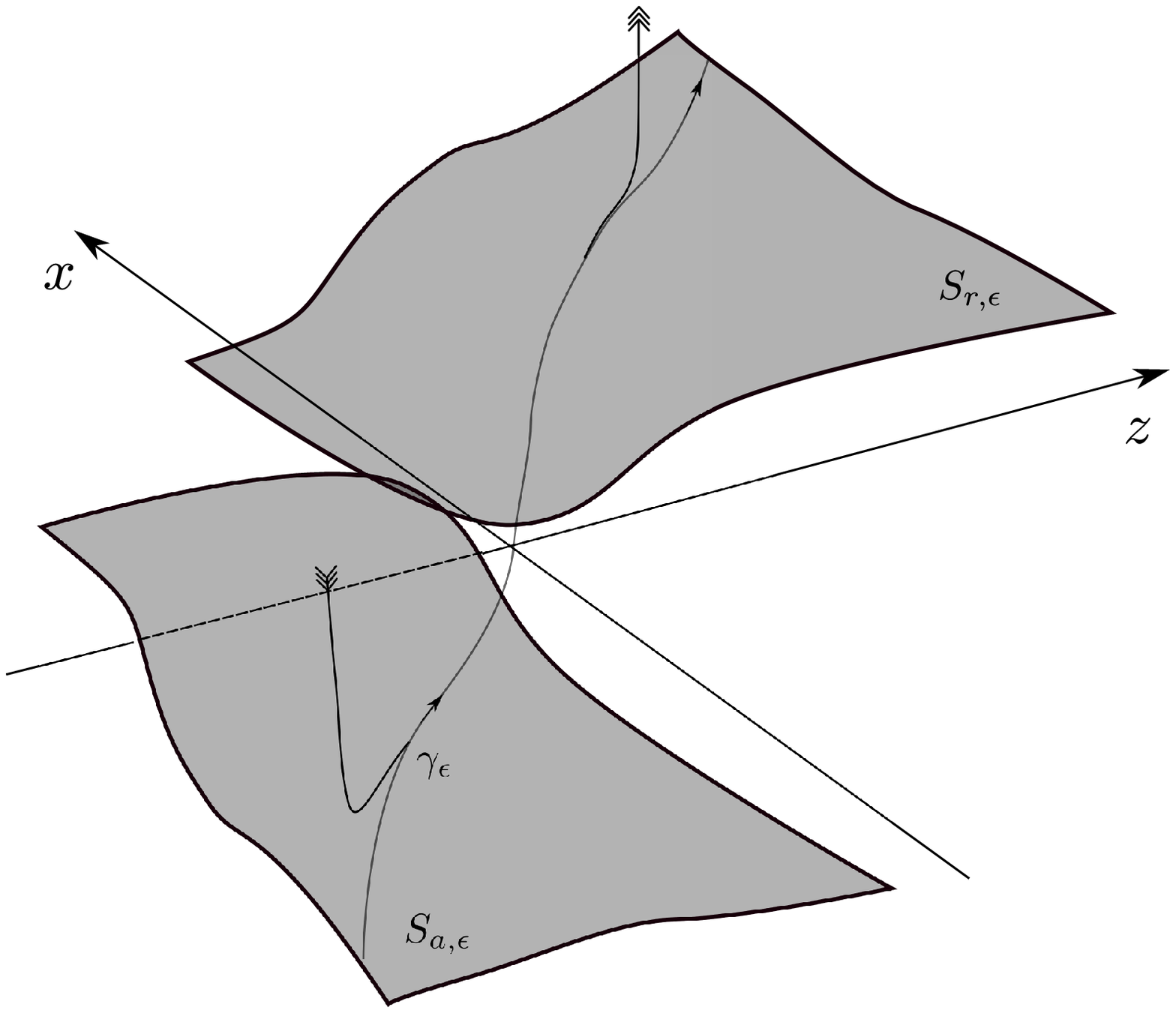}}
\end{center}
 \caption{(a) The normally hyperbolic critical manifolds $S_a$ and $S_r$ and (b) their perturbed versions $S_{a,\epsilon}$ and $S_{r,\epsilon}$. The manifolds $S_{a,\epsilon}$ and $S_{r,\epsilon}$ do not extend as far as $S_{a}$ and $S_{r}$, since Fenichel's theory only applies away from the fold lines. We have illustrated in (a) a singular canard $\gamma_0$ and in (b) a perturbed version $\gamma_\epsilon$ with its ends in $S_{a,\epsilon}$ and $S_{r,\epsilon}$. }
\figlab{SaSr2}
\end{figure}

\subsection{The connection between the results of section \secref{slidingVectorField} and the definition of canards for the regularized system}\seclab{connection}
From \thmref{criticalManifold} it follows that the analysis in section \secref{slidingVectorField} on singular canards can be directly translated into what we continue to call singular canards on the critical manifold for the limiting regularized system. A \textit{canard} for the regularized system \eqref{slowFastEquations1} is a trajectory that has its ends contained within the Fenichel slow manifold $S_{a,\epsilon}$ and $S_{r,\epsilon}$. Canards will be obtained as a perturbation of a singular canard and when this is possible we say that a singular canard \textit{persists} the regularization. Using the blow up method in the formulation of Krupa and Szmolyan \cite{krupa_extending_2001} we are able to continue $S_{a,\epsilon}$ and $S_{r,\epsilon}$ up close to $\tilde p$ by following the singular canards. This is not covered by \propref{fenichelProposition}. Canards will therefore be obtained as transverse intersections of $S_{a,\epsilon}$ and $S_{r,\epsilon}$. We illustrate a perturbed canard $\gamma_\epsilon$ with ends in $S_{a,\epsilon}$ and $S_{r,\epsilon}$ in \figref{SaSr2}. As $S_{a,\epsilon}$ and $S_{r,\epsilon}$ are in general not unique, canards will in general also be non-unique but they will be $\mathcal O(e^{-c/\epsilon})$-close. A canard is also called a primary canard if it goes from $S_{a,\epsilon}$ to $S_{r,\epsilon}$ in forward time. If it goes from $S_{r,\epsilon}$ to $S_{a,\epsilon}$ in forward time, then it is a faux canard.

\subsection{Remark on nomenclature on canards in slow-fast systems in $\mathbb R^3$}\seclab{comparison}
In \cite{brons-krupa-wechselberger2006:mixed-mode-oscil,szmolyan_canards_2001,wechselberger_existence_2005} the authors consider slow-fast systems in $\mathbb R^3$. Their reduced equations (e.g. \cite[Eq. (12)]{szmolyan_canards_2001}) are comparable to our equations \eqref{layerEqns} upon de-singularization. In these (and other) references, cases corresponding to our cases (S), (N) and (SN) also appear and are there referred to as \textit{folded saddle}, \textit{folded node} and \textit{folded saddle-node}, respectively. Here the adjective \textit{folded} is used to highlight the fact that the equilibrium appears on a fold line of the critical manifold. Other authors \cite[Fig. 17]{desroches_canards_2011} have suggested that this nomenclature be adopted for the canards that appear in the two-fold singularity in piecewise smooth systems. However this is misleading for two reasons. First, there is no underlying geometry in the piecewise smooth system that suggests the use of the word \textit{folded}. Second, even in the singular limit, the geometry of the critical manifold is different from that in slow-fast systems \cite{brons-krupa-wechselberger2006:mixed-mode-oscil,szmolyan_canards_2001,wechselberger_existence_2005} (for example, compare \cite[Fig. 2]{szmolyan_canards_2001} with \figref{SaSr2}). Of course, the critical manifold in these references also splits into an attracting critical manifold and a repelling one. But the closure of these manifolds coincides along a fold line. In our case, the closure of $S_a$ and $S_r$ only coincides in the line $\tilde p$ \eqref{tildep}, which when undoing the scaling \eqref{yTilde} and setting $\epsilon=0$, collapses to the two-fold $p$. The two-fold $p$ is the \textit{intersection} of the transverse fold lines $l^{\pm}$ \eqref{foldLines1}. 


 \section{Fold lines}\seclab{foldLines}
In this section we describe the blow up method in the formulation of Krupa and Szmolyan \cite{krupa_extending_2001} and demonstrate its application by considering the slow manifold near the fold line $\tilde l^-:\,x=0,\,y=-1$ but away from the two-fold singularity (the analysis of the fold line $\tilde l^+$ is identical). There are no canards in this section. We focus on the attracting region $S_a$ (corresponding to $\Sigma_{sl}^-$) by taking $z\le -c^{-1}$, $c$ sufficiently large but independent of $\epsilon$ (the case of $z\ge c^{-1}$ can be handled similarly). The case of the visible fold has been covered in \cite{reves_regularization_2014} in a $2D$ system. The purpose of this section is to demonstrate the use of the blow up method by extending the results of \cite{reves_regularization_2014} to our $3D$ system, carefully highlighting the dependency of the third variable, and to describe the invisible fold. In the next section, we apply the blow up method to the two-fold singularity. 
 
In \cite{reves_regularization_2014}, a two-dimensional system $(x,y)$ is considered with a switching manifold at $y=0$ containing a fold point at $(x,y)=0$. A regularizing function $\phi$ is used, which is $C^{k-1}$-smooth but no smoother ($2\le k<\infty$). A careful and lengthy asymptotic analysis is employed to conclude that the $1D$ slow manifold can be continued as an attracting invariant manifold close to the fold point. In the case of a visible fold, it is shown that the slow manifold leaves the slow-fast region at $y=-1$ when $x=\epsilon^{k/(2k-1)}\eta+\mathcal O(\epsilon^{3k/(2k-1)})$, where $\eta$ a non-zero constant that depends on $k$ and $\phi^{(k)}(-1)$ (see Theorem 2.2 and Section 3 of \cite{reves_regularization_2014}). 

In this section, we will use the blow up method to recover the results of \cite{reves_regularization_2014} more quickly and with a geometric insight that is necessarily absent from the asymptotic approach. The extension to the invisible fold case is performed at the same time. Another advantage of this method is that the steps used in this section are very similar to the ones that we need to take to study the two-fold singularity in the next section. 


Suppose, as in \cite{reves_regularization_2014,Sotomayor96}, that $\phi$ is a $C^\infty$-function on $\mathbb R\backslash \{-1,1\}$ but only $C^{k-1}$-smooth, $2\le k <\infty$, without being contained in $C^{n}$ for any $n\ge k$, on the whole of $\mathbb R$. Hence
\begin{align*}
 \phi^{(n)}(\pm 1)=0,\,\quad \text{for}\quad 0\le n\le k-1
\end{align*}
while at least one of the limits $\lim_{y\rightarrow  \pm 1^{\mp}} \phi^{(k)}(y)$ is different from $0$. 
Since we focus on $\tilde l^-$, we suppose that $\lim_{y\rightarrow -1^{+}} \phi^{(k)}(y)\ne 0$. By shifting $y$ by $1$:
\begin{align}
 y\mapsto \tilde y=1+y,\eqlab{yTranslation}
\end{align}
and dropping the tilde on $y$, we write $\phi$ as 
\begin{align}
\phi(y)=-1+\phi^{[k]} y^k+\mathcal O(y^{k+1}), \quad \phi^{[k]}\equiv \frac{\phi^{(k)}(-1)}{k!}>0.\eqlab{phik}
\end{align}
Using \eqref{slowFastEquations1} we obtain the following equations of motion
\begin{align}
 \dot x &= 2\epsilon \text{sign}(\beta) (1+\mathcal O(\epsilon+x+y^k+z)),\eqlab{EqFoldLines}\\
 \dot y&=(\vert b\vert + \mathcal O(x+z))z\phi^{[k]} y^k (1+\mathcal O(y))+2(-\vert \beta \vert+\mathcal O(x+z))x(1+\mathcal O(y^k))+\mathcal O(\epsilon),\nonumber\\
 \dot z &=2\epsilon \vert b\vert^{-1} \gamma (1+\mathcal O(\epsilon+x+y^k+z)),\nonumber\\
 \dot{\epsilon}&=0.\nonumber
\end{align}
\begin{remark}\remlab{Cinfty}
 {As in \cite{reves_regularization_2014}, it is not possible for us to handle the $C^\infty$-case, since $k$ in \eqref{phik} is finite. The existence of a finite $k$ (and a non-zero $\phi^{[k]}$) is crucial to our approach since we can then find a proper scaling for the blow up method. The $C^\infty$-case must therefore be handled by other methods. 
 
 The regularization of Sotomayor and Teixeira  \cite{Sotomayor96}  does not support analytic regularizations, since they necessarily deform the vector-fields outside $\vert y\vert<\epsilon$. }
\end{remark}

\subsection{Blow up transformation}
We now consider the transformed line $\tilde l^-:\,(x,y,z,\epsilon)=(0,0,z,0)$ in the extended phase space. It is non-hyperbolic (see \propref{nonHyperbolicLines}). The blow up method \cite{krupa_extending_2001} introduces a \textit{quasi-homogeneous blow up}, given by 
\begin{align}
\Lambda:\,B\equiv \mathbb R\times \overline{\mathbb R}_+ \times S^2&\rightarrow (x,y,z,\epsilon)\in \mathbb R^2\times \mathbb R_-\times \overline{\mathbb R}_+,\eqlab{qhblowup}\\
(z,r,(\overline x,\overline y,\overline \epsilon))&\mapsto  (x,y,z,\epsilon)=(r^{a_1} \overline x,r^{a_2} \overline y,z,r^{a_3} \overline \epsilon).\nonumber
\end{align}
The number $r$ is the \textit{exceptional divisor} such that when $r=0$ the blown-up coordinates collapse to the non-hyperbolic line. Applying $\Lambda$ therefore has the effect of blowing up the non-hyperbolic line to a cylinder $(z,(\overline x,\overline y,\overline \epsilon))\in \mathbb R \times S^2$. The weights $(a_1,a_2,a_3)$ are chosen so that the blown-up vector field has the exceptional divisor as a common factor. With a time-rescaling it is then possible to remove this common factor and de-trivialize the vector-field on the cylinder $(z,(\overline x,\overline y,\overline \epsilon))\in \mathbb R \times S^2$. {We will determine our weights below.}


Calculations are performed using local coordinates, although as noted in \cite{krupa_extending_2001}, it is almost essential not to use spherical coordinates. The correct choice of local coordinates is based on \textit{directional charts} $\kappa_i$, which in our case will correspond to $4$-dimensional spaces fixed at $\overline x=-1$ and $\overline \epsilon=1$, respectively \cite[Definition 3.1]{szmolyan_canards_2001}. On each chart $\kappa_i$, the vector-field is described using local coordinates that are introduced via a local blow up map $\mu_i$, which will be a \textit{directional blow up} in the direction corresponding to the chart $\kappa_i$, so that the quasi-homogeneous blow up defined in \eqref{qhblowup} becomes a composition: $\Lambda = \mu_i\circ \kappa_i$ \cite{krupa_extending_2001,szmolyan_canards_2001}. In our case, we need to consider only two charts:
\begin{align}
 \kappa_1:\, \overline x=-1.\quad &(z,r,(\overline x,\overline y,\overline \epsilon))\in \{B\,\vert \,\overline x<0\} \mapsto (r_1,y_1,z,\epsilon_1) \in \mathbb R^4,\eqlab{kappa1}\\
 r_1 &=r\left(-\overline{x}\right)^{1/a_1},\,y_1 =(\overline x)^{-a_2/a_1} \bar y,\,\epsilon_1 = (-\overline x)^{-a_3/a_1}\overline \epsilon,\nonumber
 \end{align}
 and
 \begin{align}
 \kappa_2:\, \overline \epsilon=1.\quad &(r,z,(\overline x,\overline y,\overline \epsilon))\in \{B\,\vert \,\overline \epsilon >0\} \mapsto (x_2,y_2,z,r_2)\in \mathbb R^4,\eqlab{kappa2}\\
 x_2 &=(\overline \epsilon)^{-a_1/a_3}\overline x,\,r_2 =r(\overline \epsilon)^{1/a_3},\,y_2 =(\overline \epsilon)^{-a_2/a_3} \bar y,\nonumber
\end{align}
and the local blow up transformations
\begin{align}
 \mu_1&:\,(r_1,y_1,z,\epsilon_1)\mapsto (x,y,z,\epsilon)=(-r_1^{a_1},r_1^{a_2} y_1,z,r_1^{a_3} \epsilon_1),\eqlab{mu1FoldLines}\\
 \mu_2&:\,(x_2,y_2,z,r_2)\mapsto (x,y,z,\epsilon)=(r_2^{a_1}x_2,r_2^{a_2} y_2,z,r_2^{a_3}).\eqlab{mu2FoldLines}
\end{align}
where $(a_1,a_2,a_3)$ are given in \eqref{weights} below. A good way of thinking about obtaining $\mu_1$ and $\mu_2$, is to insert $\overline x=-1$ from $\kappa_1$ and $\overline \epsilon=1$ from $\kappa_2$, respectively, into \eqref{qhblowup}. This then fixes the details of the maps $\kappa_1$ and $\kappa_2$ by the conditions $\Lambda=\mu_1\circ \kappa_1$ and $\Lambda=\mu_2\circ \kappa_2$, respectively. The details of $\kappa_1$ and $\kappa_2$ are not used in the sequel. For this reason the details of the charts are usually omitted by authors and instead simply referred to as $\kappa_1:\, \overline{x}=-1$ and $\kappa_2:\,\overline{\epsilon}=1$.  {Points at infinity in chart $\kappa_2$ correspond to the equator $\overline \epsilon=0$ of the blow up sphere. The purpose of chart $\kappa_1:\,\overline{x}=-1$ is to cover the part of the equator with $\overline x<0$.}

\subsection{On determining the weights $(a_1,a_2,a_3)$}
{The chart $\kappa_2$ is called the \textit{scaling chart} \cite{krupa_extending_2001} and 
corresponds cf. \eqref{mu2FoldLines} to an $\epsilon$-dependent scaling of the variables $(x,y,z)$. The variable $r_2$ is just a parameter since it is a function of $\epsilon$ only. The plane $r_2=0$ in this chart corresponds to the singular line $\tilde l^-$ in the original variables. If we re-scale the old vector-field, we find
\begin{align*}
 \dot x_2 &= 2r_2^{a_3-a_1}\text{sign}(\beta) (1+\mathcal O(z+r_2^{a_1}+r_2^{a_2k}+r_2^{a_3})),\\
 \dot y_2&=r_2^{a_2(k-1)}(\vert b\vert + \mathcal O(z+r_2^{a_1}))z\phi^{[k]} y_2^k (1+\mathcal O(r_2^{a_2}))-2r_2^{a_1-a_2}(\vert \beta \vert+\mathcal O(z+r_2^{a_1}))x_2(1+\mathcal O(r_2^{ka_2}))+\mathcal O(r_2^{a_3}),\nonumber\\
 \dot z &=2r_2^{a_3} \vert b\vert^{-1} \gamma (1+\mathcal O(z+r_2^{a_1}+r_2^{a_2k}+r_2^{a_3})),\nonumber\\
 \dot r_2&=0.\nonumber
\end{align*}
Note that this vector field vanishes when $r_2=0$. However, if we equate powers of $r_2$ in the first two equations, we can de-trivialize the dynamics for $r_2=0$ by dividing the vector-field by the resulting common factor. Since $r_2$ is constant, the division of the right hand sides corresponds to multiplying time by the same factor.
The new vector-field can then be analysed in the limit $r_2=0$ and it is possible to apply regular perturbation theory to capture the interesting case $r_2>0$. 

Equating the powers as described gives the following two linear homogeneous equations 
\begin{align*}
 a_3-a_1=a_2(k-1),\quad a_2(k-1) = a_1-a_2,
\end{align*}
for the three unknown weights $(a_1,a_2,a_3)$. The complete solution is spanned by
\begin{align}
a_1 = 2k,\,a_2 = 2,\,a_3= {2(2k-1)},\eqlab{weights}
\end{align}
with $k\in \mathbb N$. All non-zero solutions give rise to the same equations\footnote{The only difference between the solutions is the relation between $r_2$ and $\epsilon$} for $r_2=0$, which can be written as
\begin{align}
 \dot x_2 &= 2r_2^{2(k-1)}\text{sign}(\beta) (1+\mathcal O(z+r_2^{2k})),\eqlab{EqnsChartK2FoldLinesBeta}\\
 \dot y_2&=r_2^{2(k-1)}(\vert b\vert + \mathcal O(z))z\phi^{[k]} y_2^k -2r_2^{2(k-1)}(\vert \beta \vert+\mathcal O(z))x_2+\mathcal O(r_2^2),\nonumber\\
 \dot z &=2r_2^{2(2k-1)} \vert b\vert^{-1} \gamma (1+\mathcal O(z+r_2^{2k})),\nonumber\\
 \dot r_2&=0.\nonumber
\end{align}
Now we divide the right hand sides by the common factor $r_2^{a_3-a_1}=r_2^{2(k-1)}$ to obtain 
\begin{align}
 \dot x_2 &= 2\text{sign}(\beta) (1+\mathcal O(z+r_2^{2k})),\eqlab{EqnsChartK2FoldLines}\\
 \dot y_2&=(\vert b\vert + \mathcal O(z))z\phi^{[k]} y_2^k -2(\vert \beta \vert+\mathcal O(z))x_2+\mathcal O(r_2^2),\nonumber\\
 \dot z &=2r_2^{2k} \vert b\vert^{-1} \gamma (1+\mathcal O(z+r_2^{2k})),\nonumber\\
 \dot r_2&=0.\nonumber
\end{align}
Equations \eqref{EqnsChartK2FoldLines} describe the dynamics in chart $\kappa_2$}. They will be discussed in greater detail in section \secref{chartK2FoldLines} below.

The change of coordinates between the two charts $\kappa_1$ and $\kappa_2$ is determined by $\kappa_{12}$:
\begin{align}
\kappa_{12}:(x_2,y_2,z,r_2)\mapsto (r_1,y_1,z,\epsilon_1)= (r_2 (-x_2)^{-1/a_1},(-x_2)^{-a_2/a_1}y_2,z,(-x_2)^{-a_3/a_1}),\eqlab{kappa12FoldLines}
 \end{align}
defined for $x_2<0$, and its inverse $\kappa_{21}$:
\begin{align*}
 \kappa_{21}:(r_1,y_1,z,\epsilon_1)\mapsto (x_2,y_2,z,r_2)= (-\epsilon_1^{-a_1/a_3},\epsilon_1^{-a_2/a_3}y_1,z,r_1\epsilon_1^{a_1/a_3}).
\end{align*}
As in \cite{krupa_extending_2001} we will denote invariant objects in the blow up space $B$, defined in \eqref{qhblowup}, using bars, e.g. $\overline M_a, \overline M_r$ where subscript $a,r$ refers to attractive and repelling respectively. In addition, these objects will be given a second subscript corresponding to charts. So for example the manifold $\overline M_a$ will be denoted by $M_{a,i}$ in the chart $\kappa_i$.

%
%

\subsection{Dynamics in chart $\kappa_2$}\seclab{chartK2FoldLines}

We obtained the equations in this chart in \eqref{EqnsChartK2FoldLines} above. These equations with $r_2=0$ were used in \cite{reves_regularization_2014} to construct the inner solution of an asymptotic expansion. 
After the desingularization (the process of going from \eqref{EqnsChartK2FoldLinesBeta} to \eqref{EqnsChartK2FoldLines} by division of $r_2^{2(k-1)}$ and then setting $r_2=0$), the space $r_2=0$ carries non-trivial dynamics. Using Proposition 3.10 of \cite{reves_regularization_2014}, we deduce the existence of a family of trajectories $\gamma_2(z)$ (parametrized by $z$) within $r_2=0$, where
\begin{align}
y_2 = \left(  \frac{2(\vert \beta \vert+\mathcal O(z)) x_2}{\phi^{[k]}(\vert b\vert + \mathcal O(z))z } \right)^{1/k}+\mathcal O(1).\eqlab{gamma2z}
\end{align}
for $x_2\ll 0$.

In the visible case ($\beta>0$), each solution with $z<0$ is obtained as an overflowing center manifold and $\gamma_2(z)$ is therefore unique.
Each trajectory of $\gamma_2(z)$ intersects $y_2=0$ at a point $(x_2,y_2,z,r_2) = (\eta,0,z,0)$ where $\eta=\eta(z,k,\phi^{[k]})\ne 0$. By performing the scaling
\begin{align*}
x_2&= 2^{1/(2k-1)} \left(\vert b\vert^{-1/(2k-1)}\vert \beta\vert^{-(k-1)/(2k-1)}+\mathcal O(z)\right) (-\phi^{[k]} z)^{-1/(2k-1)} \tilde x_2,\\
 y_2&= 2^{2/(2k-1)} \left(\vert b\vert^{-2/(2k-1)} \vert\beta \vert^{1/(2k-1)}+\mathcal O(z)\right) (-\phi^{[k]} z)^{-2/(2k-1)} \tilde y_2
\end{align*}
and introducing new scaled variables $(\tilde x_2,\tilde y_2)$, we obtain the equations 
\begin{align*}
\dot{\tilde x}_2 &=1,\\
\dot{\tilde y}_2 &=-\tilde y_2^k-\tilde x_2,
\end{align*}
(with respect to a new time) considered in \cite[Proposition 3.10]{reves_regularization_2014}. 
The advantage of this scaling is that \eqref{gamma2z} in these coordinates now intersects $\tilde y_2=0$ in $(\tilde x_2,\tilde y_2,z,r_2)=(\tilde \eta,0,z,0)$ where the corresponding $\tilde \eta=\tilde \eta(k)$ only depends upon $k$. Therefore, we can conclude that $\eta$ takes the form:
\begin{align}
  \eta(z,k,\phi^{[k]}) &= 2^{1/(2k-1)} \left(\vert b\vert^{-1/(2k-1)}\vert \beta\vert^{-(k-1)/(2k-1)}+\mathcal O(z)\right) (-\phi^{[k]} z)^{-1/(2k-1)} \tilde \eta(k)\nonumber\\
  &=\mathcal O((-\phi^{[k]} z)^{-1/(2k-1)}).\eqlab{etaExpr}
\end{align}
Consider the sections 
\begin{align}
\Lambda_2^{in}:\,x_2=-\rho^{-1};\quad \Lambda_2^{out}:\,y_2=0, \eqlab{Sigma2inFoldLines}
\end{align}
with $\rho$ small. Let $$\Pi_2:\,\Lambda_2^{in}\rightarrow \Lambda_2^{out},$$ be the transition from $\Lambda_2^{in}$ to $\Sigma_1^{in}$ induced by the forward flow of \eqref{EqnsChartK2FoldLines} in a neighborhood of $\gamma_{2}(z)$. By the previous arguments we have that $\Pi_2(\gamma_2 \cap \Lambda_2^{in})=(\eta,0,z,r_2)$. By regular perturbation theory $\Pi_2$ maps a neighborhood of $\gamma_2 \cap \Lambda_2^{in}$ diffeomorphically onto a neighborhood of $\gamma_2 \cap \Lambda_2^{out}=(\eta,0,z,0)$. 

In the invisible case, $\gamma_2(z)$ is obtained as a non-unique attracting center manifold and a full neighborhood of $(x_2,y_2,z,r_2)=(0,0,z,0)$ contracts towards $\gamma_2(z)$. 
%
\subsection{Dynamics in chart $\kappa_1$}
The advantage of chart $\kappa_1$ is that it enables us to follow $S_{a,\epsilon}$ close to the fold-line and connect with the analysis in the scaling chart $\kappa_2$. In the language of asymptotic methods one can say that the chart $\kappa_1$ enables us to match an inner solution from the scaling chart with an outer solution obtained by Fenichel's theory. 
Inserting \eqref{mu1FoldLines} into \eqref{EqFoldLines} gives the following equations
\begin{align}
 \dot r_1 &=-k^{-1}\text{sign}(\beta) r_1 \epsilon_1,\eqlab{chartK1EquationsFoldLines}\\
 \dot y_1 &= 2k^{-1} \text{sign}(\beta) \epsilon_1 y_1 + (\vert b\vert+\mathcal O(z))z \phi^{[k]} y_1^k (1+\mathcal O(r_1^{a_2}))+2\vert \beta \vert +\mathcal O(z+r_1^2),\nonumber\\
 \dot z &= 2 r_1^{2k} \epsilon_1 \vert b\vert^{-1} \gamma(1+\mathcal O(z+r_1^2)),\nonumber\\
 \dot \epsilon_1 &= a_3 \text{sign}(\beta) \epsilon_1^2,\nonumber
\end{align}
where we have divided by the common factors $r_1^{2(k-1)}$ and $1+\mathcal O(r_1^2)$. There exist two invariant planes, $r_1=0$ and $\epsilon_1=0$, of equations \eqref{chartK1EquationsFoldLines} which intersect in an invariant line:
\begin{align*}
L_{a,1}:\,(r_1,y_1,z,\epsilon_1) = (0,\left(  \frac{2(\vert \beta \vert+\mathcal O(z))}{\phi^{[k]}(\vert b\vert + \mathcal O(z))(-z) } \right)^{1/k},z,0),\,z<0.
\end{align*}
The linearization about each $p\in L_{a,1}$ has three zero eigenvalues and one negative eigenvalue. The $r_1,z$ and $\epsilon_1$-directions are neutral, and the $y_1$-direction is contractive.
Within $r_1=0$ and for $\epsilon_1$ sufficiently small, there exists a center manifold $C_{a,1}$ at $p\in L_{a,1}$ given by
\begin{align}
 C_{a,1}:\,(r_1,y_1,z,\epsilon_1) = (0,\left(\frac{2(\vert \beta \vert+\mathcal O(z))}{\phi^{[k]}(\vert b\vert + \mathcal O(z))(-z) } \right)^{1/k} + \mathcal O(\epsilon_1),z,\epsilon_1). \eqlab{CMa1}
\end{align}
Within $C_{a,1}$, $\dot{z}=0$. The center manifold $C_{a,1}$ is unique as a center manifold of $L_{a,1}$ if $\dot{\epsilon}_1 >0$ and non-unique if $\dot{\epsilon}_1<0$ (see \figref{centerManifoldCa2FoldLine}). Following \eqref{chartK1EquationsFoldLines} $C_{a,1}$ is therefore unique if the fold-line is visible ($\beta>0$) and non-unique if the fold-line is invisible ($\beta<0$). The family of trajectories $\gamma_2(z)$ from the chart $\kappa_2$ can be transformed into the chart $\kappa_1$, using $\kappa_{12}$ \eqref{kappa12FoldLines}:
\begin{align*}
\gamma_1(z)=\kappa_{12}(\gamma_2(z)):\,(r_1,y_1,z,\epsilon_1) = (0,(-x_2)^{-a_2/a_1}y_2,z,(-x_2)^{-a_3/a_1}),
\end{align*}
with $y_2=y_2(x_2,z)$ in \eqref{gamma2z}. 
Therefore the family of trajectories $\gamma_1(z)$ is contained within $r_1=0$. Each trajectory in this family is backward (forward) asymptotic to a point $(0,\left(  \frac{2(\vert \beta \vert+\mathcal O(z))}{\phi^{[k]}(\vert b\vert + \mathcal O(z))(-z) } \right)^{1/k},z,0)$ on the invariant line $L_{a,1}$ for $\beta>0$ ($\beta<0$) (cf. \eqref{EqnsChartK2FoldLines}).  

\begin{figure}[h!] 
\begin{center}
\subfigure[Visible fold-line: unique $C_{a,1}$]{\includegraphics[width=.45\textwidth]{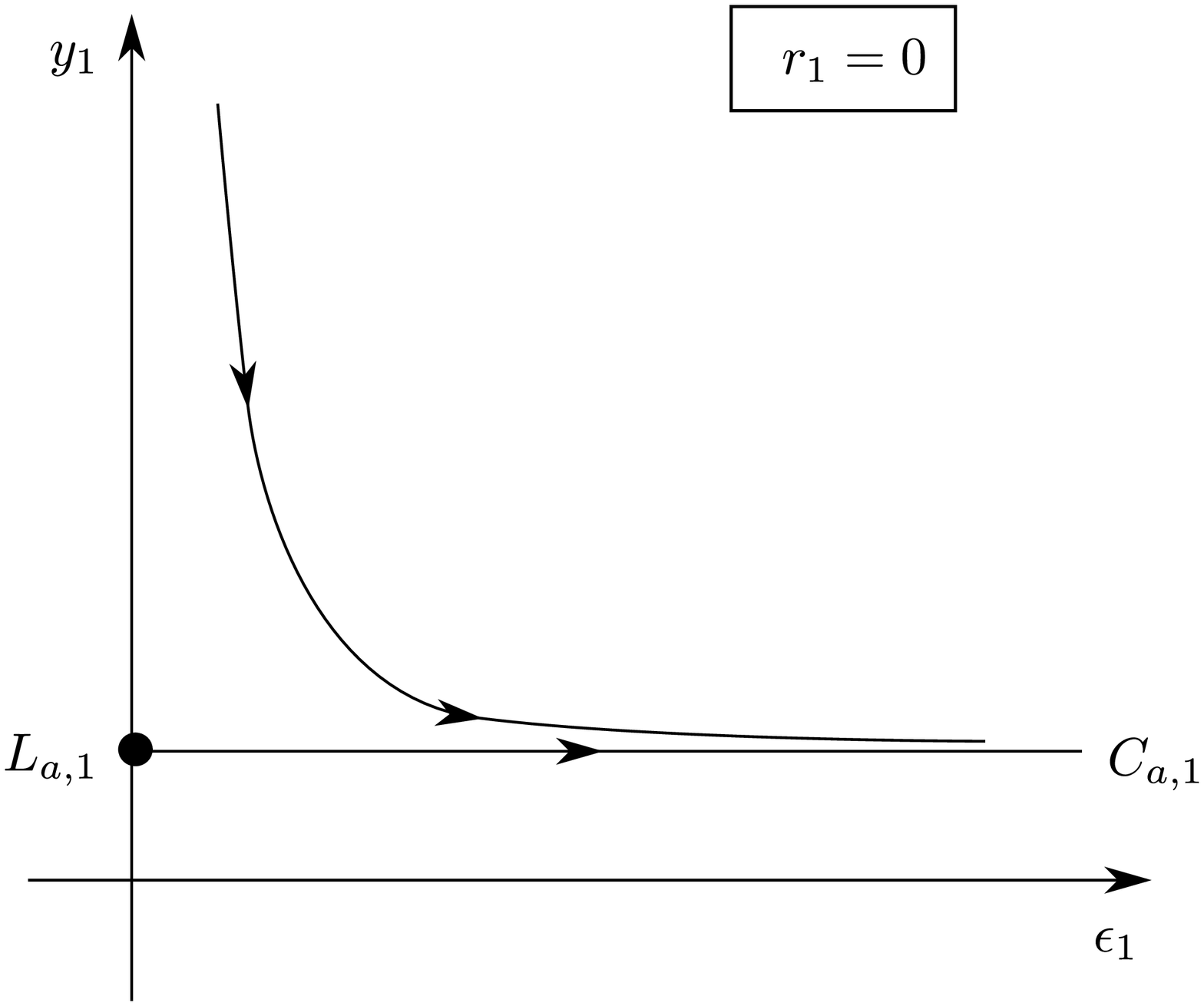}}
\subfigure[Invisble fold-line: non-unique $C_{a,1}$]{\includegraphics[width=.45\textwidth]{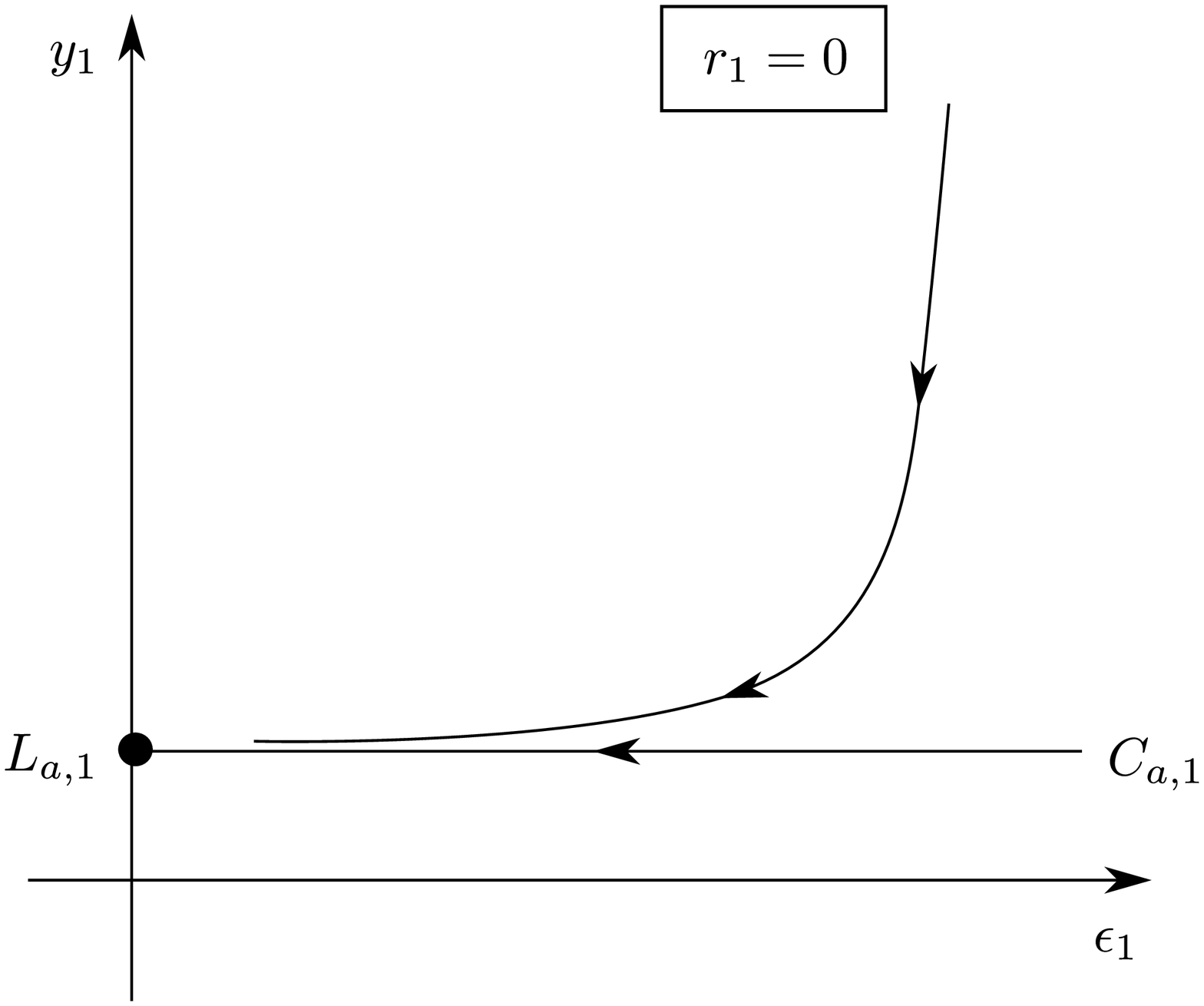}}
\end{center}
\caption{Illustration of the attracting center manifold $C_{a,1}$ \eqref{CMa1} within $r_1=0$ using a projection onto the $(\epsilon_1, y_1)$-plane. (a) The motion on $C_{a,1}$ is away from $L_{a,1}$ (which is a point in this projection) and $C_{a,1}$ is unique. (b) The motion on $C_{a,1}$ is towards $L_{a,1}$ and $C_{a,1}$ is non-unique.  }
\figlab{centerManifoldCa2FoldLine}
\end{figure}

Within $\epsilon_1=0$ we find a manifold $S_{a,1}$ of equilibria given by
\begin{align*}
 S_{a,1}:\,(r_1,y_1,z,\epsilon_1) = (r_1,\left(\frac{2(\vert \beta \vert+\mathcal O(z))}{\phi^{[k]}(\vert b\vert + \mathcal O(z))(-z) } \right)^{1/k} + \mathcal O(r_1),z,0).
\end{align*}
This is the critical manifold $S_{a}$ written in chart $\kappa_1$. The two invariant manifolds $S_{a,1}$ and $C_{a,1}$ are both contained within a $3D$ center manifold $M_{a,1}$ of $L_{a,1}$ given by
\begin{align*}
 M_{a,1}:\,(r_1,y_1,z,\epsilon_1) = (r_1,\left(\frac{2(\vert \beta \vert+\mathcal O(z))}{\phi^{[k]}(\vert b\vert + \mathcal O(z))(-z) } \right)^{1/k} + \mathcal O(r_1+\epsilon_1),z,\epsilon_1).
\end{align*}
The center manifold $M_{a,1}$ is foliated by $\epsilon=r_1^{a_3}\epsilon_1=\text{const}.$ We denote such an invariant foliation by $M_{a,1}(\epsilon)$ in chart $\kappa_1$, and note that it corresponds to the slow manifold $S_{a,\epsilon}$ where this is defined by Fenichel's theory. The manifold $C_{a,1}(\epsilon)$ intersects $\Lambda_1^{in}$.The slow flow on $M_{a,1}$ is determined by
\begin{align*}
 \dot r_1 &=-k^{-1}\text{sign}(\beta) r_1,\\
 \dot z &= 2 r_1^{2k} \vert b\vert^{-1} \gamma(1+\mathcal O(r_1^2)),\\
 \dot \epsilon_1 &= a_3 \text{sign}(\beta) \epsilon_1,
\end{align*}
where we have divided out the common factor $\epsilon_1$. In \figref{centerManifoldFoldLines}, we illustrate the dynamics, using a projection onto $z=\text{const}.<0$.

\begin{figure}[h!] 
\begin{center}
\subfigure[Visible fold-line]{\includegraphics[width=.45\textwidth]{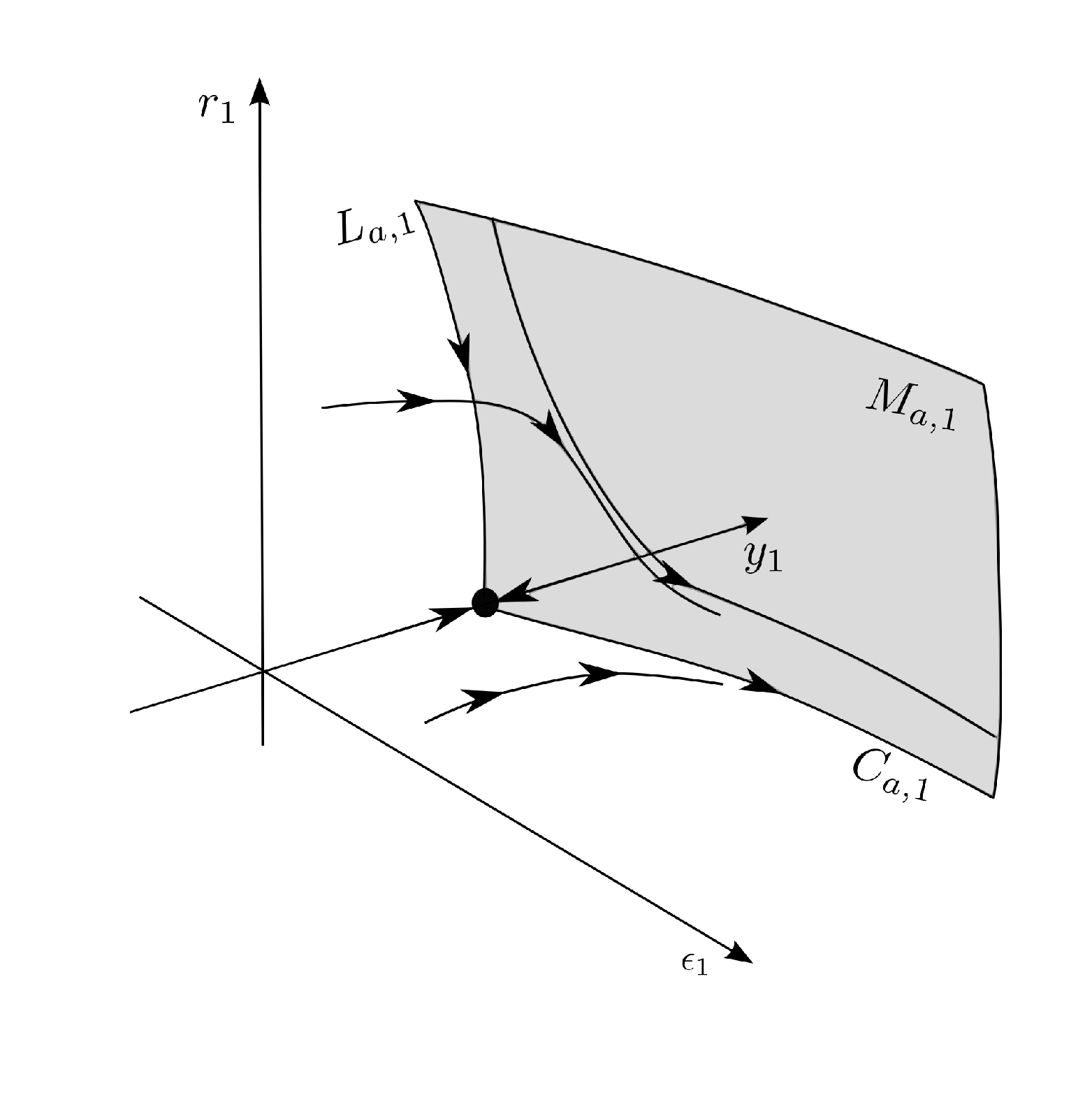}}
\subfigure[Invisible fold-line]{\includegraphics[width=.45\textwidth]{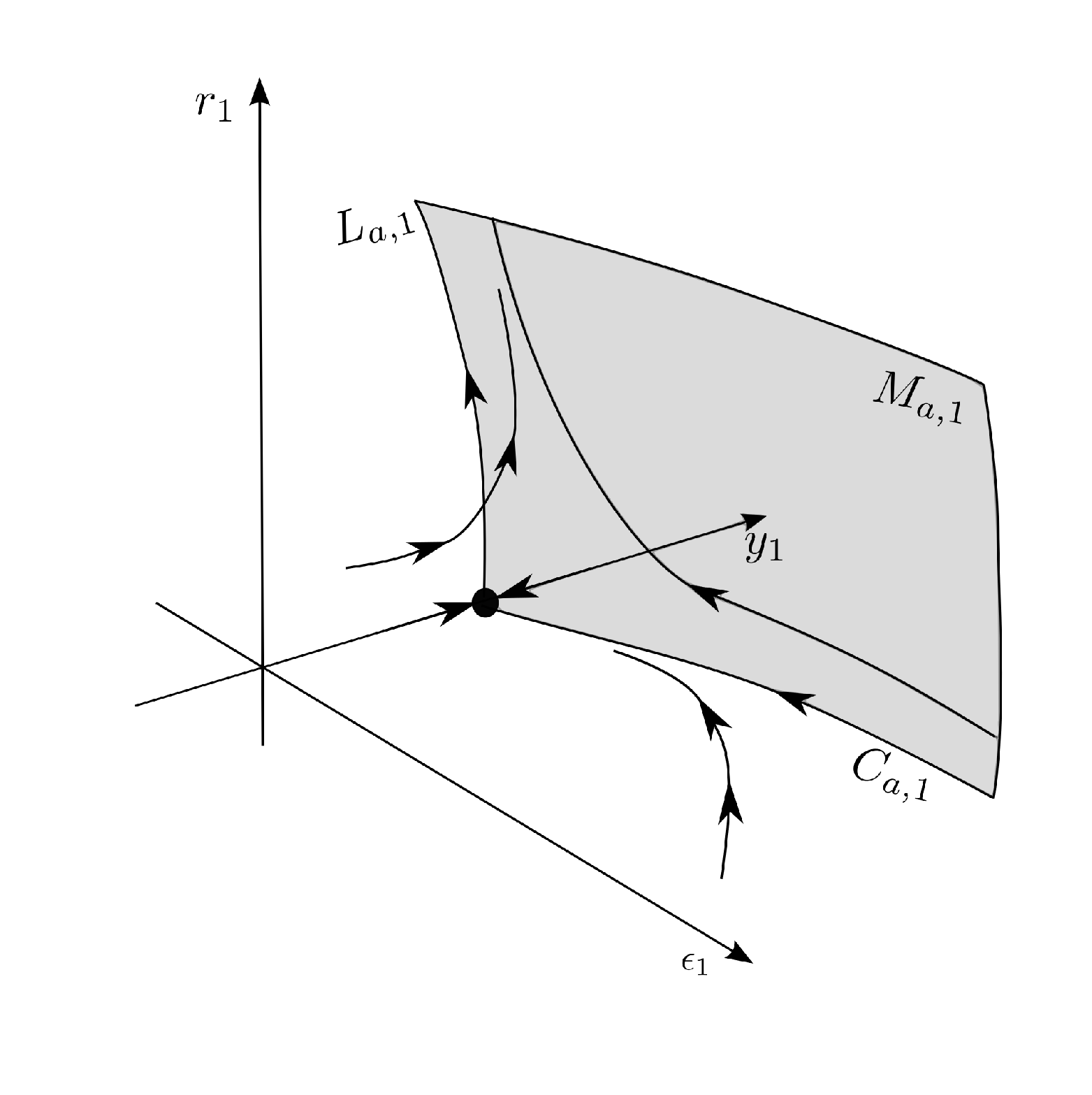}}
\end{center}
\caption{Illustration of the attracting center manifold $M_{a,1}$ using a projection onto $z=\text{const}.<0$. }
\figlab{centerManifoldFoldLines}
\end{figure}

The section $\Lambda_2^{in}$ corresponds to $$\Lambda_1^{in}:\,\epsilon_1=\rho^{a_3/a_1},$$ in chart $\kappa_1$. The manifold $C_{a,1}$ intersects $\Lambda_1^{in}$ in 
\begin{align}
 C_{a,1}\cap \Lambda_1^{in}:\,y_1 = \left(\frac{2(\vert \beta \vert+\mathcal O(z))}{\phi^{[k]}(\vert b\vert + \mathcal O(z))(-z) } \right)^{1/k} + \mathcal O(\rho),\,\epsilon_1 =\rho.\eqlab{Ca1Gamma}
\end{align}
The intersection of $M_{a,1}(\epsilon)$ with $\Lambda_1^{in}$ is $\mathcal O(r_1)$-close to $C_{1,a}\cap \Lambda_1^{in}$.
\subsection{Conclusions on the analysis of the fold-line $\tilde l^-$}
First we consider the visible case ($\beta>0$). Then our conclusions are very similar to those in \cite{krupa_extending_2001}. From the results obtained in the two charts, it can be concluded that the family of trajectories $\gamma_1(z)=\kappa_{12}(\gamma_2(z))$ for $x_2\ll 0$ is contained within the unique center manifold $C_{a,1}$. 
 This is where we need the constant $\rho$ sufficiently small, see \eqref{Sigma2inFoldLines}. Using this solution as guide, we can continue $M_{a,1}(\epsilon)$ into chart $\kappa_2$ as an invariant manifold $M_{a,2}(\epsilon)$ for $r_2$ sufficiently small. 
In particular, since $M_{a,2}(\epsilon)\cap \Lambda_2^{in}$ is $\mathcal O(r_2)$-close to $\gamma_2(z)\cap \Lambda_2^{in}$, and by the properties of $\Pi_2$, we have that $M_{a,2}(\epsilon)\cap \Lambda_2^{out}$ is $\mathcal O(r_2)$-close to $$\gamma_2(z)\cap \Lambda_2^{out}:\,(x_2,y_2,z,r_2) = (\eta,0,z,0).$$
\begin{theorem}
If $\beta>0$ (the visible fold), then $M_{a,2}(\epsilon)$ intersects $y=0$ (corresponding to $y=-1$ in our original coordinates \eqref{yTranslation}) in a curve:
\begin{align}
 x&=\epsilon^{a_1/a_3} (\eta+\mathcal O(\epsilon^{1/a_3}))=\mathcal O(\epsilon^{k/(2k-1)}(-\phi^{[k]}z)^{-1/(2k-1)}), \eqlab{MaIntersectionFoldLines}
\end{align}
with $\eta=\eta(z,k,\phi^{[k]})$ as in \eqref{etaExpr}. 
\end{theorem}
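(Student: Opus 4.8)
The plan is to assemble the chart-by-chart analysis already carried out above and then blow down, translating the exit point computed in the scaling chart $\kappa_2$ back into the original $(x,y,z,\epsilon)$ variables via the map $\mu_2$ in \eqref{mu2FoldLines}. Most of the substance has in fact been established in the preceding subsections; what remains is to track the continued slow manifold through the transition map in $\kappa_2$ and to convert the resulting estimate into the stated scaling using the weights \eqref{weights}.

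First I would fix the entry condition. In chart $\kappa_1$ the Fenichel slow manifold appears as the foliation $M_{a,1}(\epsilon)$ of the three-dimensional center manifold $M_{a,1}$ of the line $L_{a,1}$, and by the expressions above it is $\mathcal O(r_1)$-close to the center manifold $C_{a,1}$ in \eqref{CMa1}. The decisive point is that for $\beta>0$ one has $\dot\epsilon_1>0$ along \eqref{chartK1EquationsFoldLines}, so $C_{a,1}$ is the \emph{unique} center manifold and it contains the transported family $\gamma_1(z)=\kappa_{12}(\gamma_2(z))$. Reading off the intersection \eqref{Ca1Gamma} with $\Lambda_1^{in}$ and changing charts via $\kappa_{12}$ in \eqref{kappa12FoldLines}, I conclude that $M_{a,2}(\epsilon)\cap\Lambda_2^{in}$ is $\mathcal O(r_2)$-close to $\gamma_2(z)\cap\Lambda_2^{in}$, with the sections as in \eqref{Sigma2inFoldLines}.

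Next I would transit the scaling chart and blow down. Since the transition map $\Pi_2:\Lambda_2^{in}\to\Lambda_2^{out}$ induced by the forward flow of \eqref{EqnsChartK2FoldLines} near $\gamma_2(z)$ is a diffeomorphism by regular perturbation theory, the $\mathcal O(r_2)$-closeness at entry is propagated to the exit, so $M_{a,2}(\epsilon)\cap\Lambda_2^{out}$ is $\mathcal O(r_2)$-close to $\gamma_2(z)\cap\Lambda_2^{out}=(\eta,0,z,0)$; in particular $x_2=\eta+\mathcal O(r_2)$ on $\{y_2=0\}$. Applying $\mu_2$ from \eqref{mu2FoldLines} with $r_2=\epsilon^{1/a_3}$ then gives $y=r_2^{a_2}y_2=0$ (so the exit occurs at $y=-1$ in the translated coordinates \eqref{yTranslation}) and $x=r_2^{a_1}x_2=\epsilon^{a_1/a_3}(\eta+\mathcal O(\epsilon^{1/a_3}))$. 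Substituting \eqref{weights}, so that $a_1/a_3=k/(2k-1)$ and $1/a_3=1/(2(2k-1))$, together with the magnitude $\eta=\mathcal O((-\phi^{[k]}z)^{-1/(2k-1)})$ from \eqref{etaExpr}, yields \eqref{MaIntersectionFoldLines}.

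The hard part is the entry step, not the final computation: one must genuinely identify the continued Fenichel manifold with the specific trajectory family $\gamma_2(z)$, and this identification rests entirely on the uniqueness of $C_{a,1}$. That uniqueness holds precisely because $\beta>0$ forces $\dot\epsilon_1>0$; in the invisible case $\dot\epsilon_1<0$, the center manifold is non-unique and a full neighborhood is contracted onto $\gamma_2(z)$, so the sharp $\mathcal O(r_2)$-selection of a single exit curve fails. This is exactly why the theorem is confined to the visible fold and the invisible case must be handled differently.
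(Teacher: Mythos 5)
Your proposal is correct and follows essentially the same route as the paper: the paper's displayed proof is only the final blow-down step via $\mu_2$ with $r_2=\epsilon^{1/a_3}$, but the surrounding subsection carries out exactly your argument --- uniqueness of $C_{a,1}$ for $\beta>0$ (from $\dot\epsilon_1>0$), identification of $M_{a,2}(\epsilon)\cap\Lambda_2^{in}$ as $\mathcal O(r_2)$-close to $\gamma_2(z)\cap\Lambda_2^{in}$, propagation through the diffeomorphism $\Pi_2$ to $\Lambda_2^{out}$, and then the weight substitution from \eqref{weights}. Your closing remark on why the visible case is singled out (uniqueness of the center manifold versus the contraction of a full neighborhood in the invisible case) matches the paper's own discussion.
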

\begin{proof}
Here we have used \eqref{mu2FoldLines} to \textit{blow down}. In particular, we have used that $$r_2=\epsilon^{1/a_3}=\epsilon^{1/(2(2k-1))}.$$
\end{proof}

Using the attractiveness of $M_{a,1}(\epsilon)$, the potential non-uniqueness of $S_{a,\epsilon}$ only manifests itself in exponentially close curves, each of the form \eqref{MaIntersectionFoldLines}.

The invisible case ($\beta<0$) is easier. By the non-uniqueness of $C_{a,1}$, it can be concluded that a full neighborhood of the fold-line $(x_2,y_2,z,\epsilon_2)=(0,0,z,0)$ in chart $\kappa_2$ is contained within $W^s(S_{a,\epsilon})$.



 
\section{The two-fold singularity}\seclab{twoFold}
We now move on to the two-fold singularity, the main focus of our paper. The remainder of the paper will aim to prove the following theorem:
\begin{theorem}\thmlab{mainRes}
 There exists an $\epsilon_0>0$ so that the following statements hold true. 
\begin{itemize}
 \item Every locally unique primary singular canard $\gamma_0$ persists the regularizing perturbation in the sense that it gives rise to a locally unique canard $\gamma_\epsilon$ of \eqref{slowFastEquations1} for $\epsilon\le \epsilon_0$ with the following property: Let $\mathcal V$ be a neighborhood of the two-fold singularity. Then $\gamma_{\epsilon}\cap \mathcal V$ is $\mathcal O(\sqrt{\epsilon})$-close to $\gamma_0 \cap \mathcal V$.
 \item In the cases where there exists a sector of primary singular canards contained within $\Sigma_{sl}$, then there exists a locally unique canard of the regularized system $\gamma_\epsilon$ for $\epsilon\le \epsilon_0$ provided a non-resonance condition
 \begin{align*}
  \lambda_{+}^{-1}\lambda_{-} \notin \mathbb N,
 \end{align*}
 holds, where $\lambda_{\pm}$ are given by \eqref{lambdapm}.  The projection of such a $\gamma_\epsilon$ onto the $(x,z)$-plane is $\mathcal O(\sqrt{\epsilon})$-close to tangency with $v_{+}$ at the two-fold singularity. 
 \end{itemize}
 \end{theorem}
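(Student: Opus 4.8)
The plan is to resolve the non-hyperbolic line $\tilde p$ of \eqref{tildep} by the blow up method of Section~\secref{foldLines}, and then to continue the Fenichel manifolds $S_{a,\epsilon}$ and $S_{r,\epsilon}$ of \propref{fenichelProposition} all the way up to $\tilde p$, so that a canard is detected as a transverse intersection $S_{a,\epsilon}\cap S_{r,\epsilon}$ (as explained in section~\secref{connection}). Since $\tilde p$ is the line $x=z=\epsilon=0$, $y\in[-1,1]$, I would cylindrically blow up the $(x,z,\epsilon)$-directions with $y$ as the coordinate along the line, using a quasi-homogeneous map of the form $(x,z,\epsilon)=(r\bar x,r\bar z,r^2\bar\epsilon)$ with $(\bar x,\bar z,\bar\epsilon)\in S^2$. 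The weight $2$ on $\epsilon$ against the weight $1$ on $x$ and $z$ is forced by balancing the $\mathcal O(\epsilon)$ slow drift in \eqref{slowFastEquations1} against the fold terms in the $y$-equation, and it is exactly this choice that produces the canard scaling $x,z=\mathcal O(\sqrt\epsilon)$ announced in the theorem. As in Section~\secref{foldLines}, I would work in directional charts: the entry/exit charts $\bar x=\pm1$ and $\bar z=\pm1$ match $S_{a,\epsilon}$ and $S_{r,\epsilon}$ to Fenichel's theory and, at the ends $y=\pm1$ of $\tilde p$, connect to the fold-line analysis of Section~\secref{foldLines}; the scaling chart $\bar\epsilon=1$ (so that $r=\sqrt\epsilon$, $x=\sqrt\epsilon\,\bar x$, $z=\sqrt\epsilon\,\bar z$) carries the essential canard dynamics.

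In the scaling chart, after desingularization, the leading-order ($\epsilon=0$) vector field is a three-dimensional system in $(\bar x,\bar z,y)$ whose layer set $\{\dot y=0\}$ is the blown up critical manifold; on it the reduced flow is, by \thmref{criticalManifold}, precisely the linear sliding field \eqref{slidingVectorFieldNewTime}, with equilibrium at the blown up two-fold and with eigenvalues $\lambda_\pm$ and eigenvectors $v_\pm$ of \propref{EqSlidingVectorField}. A canard is then a connecting orbit that follows the attracting sheet, passes the edges $y=\pm1$, and continues onto the repelling sheet, with tangency at the origin governed by the eigenstructure $\{\lambda_\pm,v_\pm\}$. This places the problem in direct analogy with canards of a folded node (case (N)) or folded saddle (case (S)) in smooth slow--fast systems, with $\lambda_+^{-1}\lambda_-$ playing the role of the eigenvalue ratio.

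For the first statement I would argue as follows. A locally unique primary singular canard $\gamma_0$ (the saddle cases, and case (N) visible of \propref{singularCanards}, cf. \lemmaref{uniqueness}) is tangent to a hyperbolic, non-resonant eigendirection, namely $v_-$ with $\lambda_-<0$, or $v_+$ with $\lambda_+>0$ in the saddle. Continuing $S_{a,\epsilon}$ and $S_{r,\epsilon}$ through the charts by an exchange-lemma/tracking argument, the associated connecting orbit survives as a transverse intersection $S_{a,\epsilon}\cap S_{r,\epsilon}$, and transversality yields a locally unique $\gamma_\epsilon$, unique modulo the $\mathcal O(e^{-c/\epsilon})$ nonuniqueness of the slow manifolds. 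Blowing down with $r=\sqrt\epsilon$ then gives the asserted $\mathcal O(\sqrt\epsilon)$-closeness of $\gamma_\epsilon\cap\mathcal V$ to $\gamma_0\cap\mathcal V$.

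The second statement is the hard part and the crux of the whole theorem. Here $\gamma_0$ lies in a sector of weak singular canards (case (N) visible--invisible with $b>0$, region~\textnormal{\circled{5}}, and the invisible case, region~\textnormal{\circled{2}}), all tangent to $v_+$ and non-unique in the singular limit, so no transversality is available a priori. I would control the passage through the scaling chart by reducing the $(\bar x,\bar z)$-flow near the node to its linear normal form with eigenvalue ratio $\lambda_+^{-1}\lambda_-$, and then analyse the transition map between the entry and exit sections, the analog of the folded-node funnel. The non-resonance condition $\lambda_+^{-1}\lambda_-\notin\mathbb N$ is exactly what prevents resonant terms from obstructing this analysis and forces the continued manifolds $S_{a,\epsilon}$ and $S_{r,\epsilon}$ to meet in a single transverse point, giving a locally unique $\gamma_\epsilon$ whose $(x,z)$-projection inherits the dominant weak direction $v_+$ up to $\mathcal O(\sqrt\epsilon)$. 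The main obstacle is precisely this node-passage analysis: establishing uniqueness of the intersection despite the singular sector, identifying the resonances $\lambda_+^{-1}\lambda_-\in\mathbb N$ as the sharp obstruction where secondary canards bifurcate (consistent with the numerics of Section~\secref{numerics}), and carrying this out while simultaneously tracking the exponential nonuniqueness of $S_{a,\epsilon}$ and $S_{r,\epsilon}$ through the blow up.
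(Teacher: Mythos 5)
Your geometric setup is the same as the paper's: the cylindrical blow up of the line $\tilde p$ with weights $(1,1,2)$ on $(x,z,\epsilon)$ and the fast variable unscaled along the line, the directional charts $\kappa_1$ and the scaling chart $\kappa_2$ with $r_2=\sqrt{\epsilon}$, the strategy of continuing $S_{a,\epsilon}$ and $S_{r,\epsilon}$ into the scaling chart and detecting canards as transverse intersections, and the resulting $\mathcal O(\sqrt{\epsilon})$ estimates. (The paper additionally performs the preliminary transformation $w=(1-\phi(y))/(1+\phi(y))$ of \eqref{wTransformation}, but that is a convenience, not a conceptual difference.) The problem is that your argument stops exactly where the actual proof has to start: for the first bullet you appeal to an ``exchange-lemma/tracking argument'' and simply assert transversality, and for the second you propose a folded-node funnel/transition-map analysis which you then yourself identify as ``the main obstacle'' --- i.e.\ you restate the difficulty rather than resolve it. Neither bullet is proved.

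The missing mechanism, which is the core of the paper's proof and handles both bullets at once, is the following. In chart $\kappa_1$ a center-manifold analysis produces the equilibria $p_{a,1,\pm}$ and invariant lines $l_{a,1,\pm}$, and \propref{centerManifoldK2} shows that $C_{a,1}$ is \emph{unique} precisely along primary singular canards (this, not an exchange lemma, is what lets one carry the Fenichel manifolds up to the scaling chart, and it also separates primary from faux canards --- note, in passing, that in case (S) the direction $v_+$ with $\lambda_+>0$ is a \emph{faux} canard, not a locally unique primary one as your proposal suggests). In chart $\kappa_2$ the singular canards are \emph{exact straight-line solutions} $l_{2,\pm}$ \eqref{l2pmEq} of the $r_2=0$ system, and the repelling objects $C_{r,2}$, $M_{r,2}(\epsilon)$ come for free from the time-reversal symmetry $(x,w,z,t)\mapsto(-x,w,-z,-t)$. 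Transversality of $T_{q_{2,\pm}}C_{a,2}$ and $T_{q_{2,\pm}}C_{r,2}$ is then equivalent, by \lemmaref{transversality}, to the non-existence of nonzero solutions of the variational equations \eqref{uvVariationalEqs} along $l_{2,\pm}$ with algebraic growth in both time directions; after rescaling, these variational equations become the Weber equation \eqref{weberEqns} with coefficient $\xi_{\pm}=\lambda_{\pm}^{-1}\lambda_{\mp}$ \eqref{xipm}, whose algebraically growing solutions are Hermite polynomials and exist precisely when $\xi_{\pm}\in\mathbb N$. This single computation gives everything: in case (S), and for the strong canard in case (N), the ratio is never a natural number, so transversality is automatic and the locally unique primary canards persist (first bullet); for the weak canard of a sector the ratio can resonate, and $\lambda_+^{-1}\lambda_-\notin\mathbb N$ is exactly the hypothesis under which transversality holds (second bullet). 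Regular perturbation theory in $r_2=\sqrt{\epsilon}$ then transfers the transverse intersection of $C_{a,2}$ and $C_{r,2}$ to $M_{a,2}(\epsilon)\cap M_{r,2}(\epsilon)$, and blowing down gives the stated closeness. Without the invariant lines, the variational/Weber reduction, and the Hermite characterization of resonance, your proposal has no route to the non-resonance condition or to uniqueness of the intersection, so the gap is essential rather than technical.
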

 We directly deduce the following important corollary:
\begin{corollary}
The regularized system only fully retains the features of the singular canards in the piecewise smooth system in the cases when the sliding region does not include a full sector of singular canards.
\end{corollary}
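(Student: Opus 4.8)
The plan is to deduce the corollary directly from \thmref{mainRes} by a classification argument, interpreting ``fully retains the features'' as the statement that the set of singular canards and the set of canards of the regularized system stand in a dimension-preserving, local-uniqueness-preserving correspondence. First I would use \propref{singularCanards} together with \lemmaref{uniqueness} to partition the hyperbolic cases into two groups. In regions \circled{1}, \circled{3}, and \circled{6} of \figref{ParameterEqType1}, every primary singular canard is locally unique (the strong canard in case (N), and the primary canards in case (S)), so the sliding region contains no full sector. In regions \circled{2} and \circled{5}, by contrast, $\Sigma_{sl}$ contains a full (one-parameter) sector of non-unique weak primary singular canards, all tangent to $v_+$ at the two-fold.

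In the no-sector cases I would invoke the first bullet of \thmref{mainRes}: each locally unique primary singular canard $\gamma_0$ persists to a locally unique canard $\gamma_\epsilon$ that is $\mathcal{O}(\sqrt\epsilon)$-close to $\gamma_0$ throughout a neighborhood $\mathcal V$ of the two-fold. Since the singular canard is already an isolated trajectory (a curve in $\Sigma_{sl}\cup\{p\}$) and it persists as an isolated curve with its full local geometry and closeness preserved, the correspondence between singular canards and canards is one-to-one and the structure is retained in full.

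For the sector cases I would argue that full retention necessarily fails on dimensional grounds, which is precisely the content of the second bullet of \thmref{mainRes}. In the singular limit the whole sector of canards lives on the degenerate critical manifold, where $S_a$ and $S_r$ both collapse onto $\Sigma_{sl}$ (\remref{S0vsSigmaSl}); but for $\epsilon>0$ the slow manifolds $S_{a,\epsilon}$ and $S_{r,\epsilon}$ are genuine two-dimensional surfaces in the three-dimensional phase space, so that canards arise only as their transverse intersections, that is, as isolated curves rather than as a two-dimensional family of initial data. Thus the one-parameter sector of singular canards cannot reproduce itself; \thmref{mainRes} shows that at most a single canard survives, tangent to $v_+$ up to $\mathcal O(\sqrt\epsilon)$, and even this requires the non-resonance condition $\lambda_+^{-1}\lambda_-\notin\mathbb N$ (at resonance one instead finds the secondary canards of \secref{numerics}). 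The collapse from a sector to an isolated canard is exactly the loss of features asserted by the corollary.

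Combining the two cases yields the dichotomy: full retention holds precisely when $\Sigma_{sl}$ contains no full sector of singular canards. The main obstacle is not analytic, since the hard work is already packaged into \thmref{mainRes}, but conceptual: one must fix a precise meaning of ``retains the features'' under which the statement is both true and non-vacuous. I would make this precise by measuring retention through the dimension and local uniqueness of the canard set, so that the genuine distinction between an isolated persisting canard and a collapsing sector becomes the crux of the argument.
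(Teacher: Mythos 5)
Your proposal is correct and follows essentially the same route as the paper, which states no separate proof but deduces the corollary directly from \thmref{mainRes}: locally unique primary singular canards persist one-to-one (first bullet), while a full sector of weak singular canards collapses to at most one canard under the non-resonance condition (second bullet), so full retention holds exactly in the no-sector cases. Your added precision about what ``retains the features'' means, and the dimensional explanation of the collapse via the transverse intersection of $S_{a,\epsilon}$ and $S_{r,\epsilon}$, simply makes explicit what the paper leaves implicit (your case list omits region \circled{4}, but that region has no sector and no primary singular canard, so it does not affect the dichotomy).
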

 \begin{remark}
 According to \lemmaref{uniqueness} the locally unique primary singular canards, considered in the first bullet-point of the theorem, is either strong primary singular canards in the case (N) or primary singular canards in case (S).
\end{remark}

To prove the theorem it is useful to introduce the following affine transformation 
\begin{align}
w=\frac{1-\phi(y)}{1+\phi(y)}\in (0,\infty) \quad \text{for} \quad y\in (-1,1),\eqlab{wTransformation}
\end{align}
into \eqref{slowFastEquations1} to obtain:
\begin{eqnarray}
\dot x &=&\epsilon ( \vert \beta\vert^{-1} c+\text{sign}(\beta)w+\mathcal O(\epsilon + x + z)),\eqlab{slowFastEquations2}\\
\dot w &=&f(w)(-(\vert b\vert+\mathcal O(x+z)) {z} + (\vert \beta\vert +\mathcal O(x+z))xw+\mathcal O(\epsilon)),\nonumber\\
 \dot z&=&\epsilon ( \text{sign}(b)+\vert b\vert^{-1} \gamma w+\mathcal O(\epsilon + x + z)),\nonumber
\end{eqnarray}
where $$f(w)=\frac12 (1+w)^2\phi'(\phi^{-1}((1-w)/({1+w})))>0,$$ for $y\in (-1,1)$. We have here also multiplied the vector-field by $\frac12 (1+w)>0$.
Then the critical manifold $S_a$ takes the following form
\begin{eqnarray}
%
S_a:\,w =\frac{(\vert b \vert +\mathcal O(x+z)) z}{(\vert \beta \vert +\mathcal O(x+z))x},\,(x,0,z)\in \Sigma_{sl}^-.
  \eqlab{wSlowManifold}
\end{eqnarray}
Moreover, the (interior of the) non-hyperbolic line $\tilde p$ \eqref{tildep} is transformed into
\begin{align}
 \tilde p:\,x=0,\,z=0,\,\epsilon=0,\,w\in (0,\infty). \eqlab{tildep2}
\end{align}
We will continue to refer to these objects as $S_a$ and $\tilde p$, respectively.

\subsection{Blow-up transformation}
We blow up the transformed non-hyperbolic invariant line $\tilde p$ \eqref{tildep2} (see \propref{nonHyperbolicLines}) using the following blow up transformation
\begin{eqnarray*}
 \Gamma:\,x=r\overline x,\,z=r\overline z,\,\epsilon = r^2\overline \epsilon,
\end{eqnarray*}
with $(\overline x,\overline z,\overline \epsilon)\in S^2$. Note that fast variable $w$ does not transform. As in the case of the fold, we consider the following charts 
$$\kappa_1:\,\overline x=-1,\quad \text{and}\quad \kappa_2:\,\overline \epsilon=1.$$ 
However the corresponding local blow up transformations 
\begin{align}
 \mu_1:\,\,x=-r_1,\,z=r_1z_1,\,\epsilon = r_1^2\epsilon_1,\eqlab{chartK2}
 \end{align}
 and
 \begin{align}
 \mu_2:\,x=r_2x_2,\,z=r_2z_2,\,\epsilon = r_2^2.\eqlab{chartK1}
 \end{align}
 are different. We will continue to use the same notation as we did in section \secref{foldLines}, even though there will be different expressions for $S_{a,1}$, $M_{a,1}$ and $C_{a,1}$ (see below). We believe it is useful to duplicate the notation because it stresses the standardization of the method, emphasizes the similarity of the arguments and leads to related geometric objects.

The change of coordinates between the different charts is given by
\begin{eqnarray*}
\kappa_{12}&:(x_2,w,z_2,r_2)\mapsto (r_1,w,z_1,\epsilon_1)=&(-r_2 x_2,w,\,-x_2^{-1} z_2,\,x_2^{-2}),\\
\kappa_{21}&:(r_1,w,z_1,\epsilon_1)\mapsto (x_2,w,z_2,r_2) =&(-1/\sqrt{\epsilon_1},\,w,\,z_1/\sqrt{\epsilon_1},\,r_1 \sqrt{\epsilon_1}),
\end{eqnarray*}
defined for $x_2<0$. 
\subsection{Dynamics in chart $\kappa_1$}
Inserting \eqref{chartK2} into \eqref{slowFastEquations2} gives the following equations
\begin{eqnarray}
 \dot r_1 &=&-r_1\epsilon_1 F_1(r_1,w,z_1,\epsilon_1),\eqlab{eqnChartK2}\\
 \dot w &=&f(w) \left(-\vert  b\vert z_1-\vert \beta \vert w+\mathcal O(r_1)\right),\nonumber\\
 \dot z_1 &=&\epsilon_1 \left(\text{sign}(b)+\vert b\vert^{-1}\gamma w +\mathcal O(r_1)+F_1(r_1,w,z_1,\epsilon_1)z_1\right),\nonumber\\
 \dot \epsilon_1&=&2 F(r_1,w,z_1,\epsilon_1)\epsilon_1^2,\nonumber
\end{eqnarray}
where
\begin{eqnarray*}
 F_1(r_1,w,z_1,\epsilon_1) = \vert \beta\vert^{-1} c+\text{sign}(\beta) w+\mathcal O(r_1).
\end{eqnarray*}
and we have rescaled time by a factor of $r_1$. 
%

As for the fold, in chart $\kappa_1$,  $\epsilon_1=0$ and $r_1=0$ are two invariant spaces. Their intersection is a line of equilibria $L_{a,1}$ determined by
\begin{align*}
 L_{a,1}:\,(r_1,w,z_1,\epsilon_1)=(0,-\vert \beta \vert^{-1} {\vert b\vert}z_1,z_1,0),\,z_1<0.
\end{align*}
Within $r_1=0$ there exists a $2D$ center manifold $C_{a,1}$ of $L_{a,1}$ which can be written as a graph over $(z_1,\epsilon_1)$: 
\begin{align*}
 C_{a,1}:\,(r_1,w,z_1,\epsilon_1) = (0,-\vert \beta \vert^{-1} {\vert b\vert}z_1+\mathcal O(\epsilon_1),z_1,\epsilon_1),\,z_1<0,\,\epsilon_1\ge 0.
\end{align*}
Within $C_{a,1}$ there exists invariant lines $l_{a,1,\pm}$ given by:
 \begin{eqnarray}
  l_{a,1,\pm}:\,(r_1,w,z_1,\epsilon_1)=(0,-\vert \beta \vert^{-1} {\vert b\vert}\chi_{\pm},\chi_{\pm},\epsilon_1),\,\epsilon_1\ge 0.\eqlab{la2pm}
 \end{eqnarray}
%
 The pair $(w,z_1)=(-\vert \beta \vert^{-1} {\vert b\vert} \chi_{\pm },\chi_{\pm})$ is obtained as a solution of $(\dot w,\dot z_1)=(0,0)$ for $r_1=0$ where $\chi_{\pm}$ are given in \eqref{chipm}.
We only consider $\chi_{\pm}<0$ since this corresponds to the stable sliding region $\Sigma_{sl}^-$. Then also $w=-\vert \beta \vert^{-1} {\vert b\vert} \chi_{\pm }>0$ as it should be according to \eqref{wTransformation}.
%

The space $\epsilon_1=0$ contains a $2D$-manifold $S_{a,1}$ of equilibria given by
\begin{align*}
 S_{a,1}:\,(r_1,w,z_1,\epsilon_1)=(r_1,-\vert \beta \vert^{-1} {\vert b\vert} z_1+\mathcal O(r_1),z_1,0),\,r_1\ge 0,\,z_1<0.
\end{align*}
The invariant manifold $S_{a,1}$ corresponds to the critical manifold $S_{a}$ \eqref{wSlowManifold}. The manifolds $C_{a,1}$ and $S_{a,1}$ are both contained within a $3D$ attracting center manifold $M_{a,1}$ of $L_{a,1}$, which can be written as a graph $w=m(r_1,z_1,\epsilon_1)$ over $(r_1,z_1,\epsilon_1)$:
\begin{align}
M_{a,1}:\, w=m(r_1,z_1,\epsilon_1)=-\vert \beta \vert^{-1} {\vert b\vert}z_1+\mathcal O(r_1+\epsilon_1)
\eqlab{mCenterManifold}
\end{align}
where the function $m$ satisfies the following condition:
\begin{align}
 m(0,\chi_{\pm},\epsilon_1) = -\vert \beta \vert^{-1} {\vert b\vert} \chi_{\pm},\eqlab{mCenterManifoldProperty}
\end{align}
since $M_{a,1}$ also contains the invariant lines $l_{a,1,\pm}$.  
The center manifold $M_{a,1}$ is foliated by invariant sub-manifolds given by $\epsilon=r_1^2\epsilon_1=\text{const}.$ We denote such an invariant sub-manifold by $M_{a,1}(\epsilon)$ in the chart $\kappa_1$ and $\overline M(\epsilon)$ in the blow up space. ${M}_{a,1}(\epsilon)$ corresponds to the slow manifold $S_{a,\epsilon}$ where this is defined by Fenichel's theory. They are potentially distant only by an amount $\mathcal O(e^{-c/\epsilon})$ but this non-uniqueness plays no role in the following. 

As with the analysis of the fold lines in section \secref{foldLines}, the uniqueness/non-uniqueness of $C_{a,1}$ as a center manifold plays an important role here. It depends on the direction of the flow on $C_{a,1}$. 

We would like to connect the flow of the sliding vector-field to the flow on the center manifold. We therefore consider the reduced equations by inserting $w=m(r_1,z_1,\epsilon_1)$ into \eqref{mCenterManifold}. 
Upon division by $2\epsilon_1\vert \beta \vert^{-1}$ we finally obtain the reduced equations on $M_{a,1}$:
\begin{align}
 r_1' &=-r_1 G(r_1,z_1,\epsilon_1),\eqlab{centerManifoldK2ReducedEqns}\\
 z_1'&=-\left(\vert b\vert \text{sign}(\beta)z_1^2 - (c-\gamma) z_1-\vert \beta\vert \text{sign}(b)\right)+\epsilon_1H(r_1,z_1,\epsilon_1)+\mathcal O(r_1),\nonumber\\
 \epsilon_1'&=2\epsilon_1 G(r_1,z_1,\epsilon_1),\nonumber
\end{align}
where
\begin{align*}
G(r_1,z_1,\epsilon_1) =  -\vert b\vert \text{sign}(\beta)z_1+c+\mathcal O(r_1+\epsilon_1).
\end{align*}
Note that $H(0,\chi_{\pm},\epsilon_1)=0$ by \eqref{mCenterManifoldProperty}.
The points
\begin{align}
 p_{a,1,\pm}\equiv (r_1,z_1,\epsilon_1)= (0,\chi_{\pm},0),\eqlab{pa2pm}
\end{align}
are equilibria of these equations provided $\chi_{\pm}<0$. The value of $w$ at $p_{a,1,\pm}$ is $-\vert \beta \vert^{-1} {\vert b\vert} \chi_{\pm}$. In the following we shall also think of $p_{a,1,\pm}$ as $(r_1,w,z_1,\epsilon_1)=(0,-\vert \beta \vert^{-1} {\vert b\vert} \chi_{\pm},\chi_{\pm},0)\in L_{a,1}$. The lines $l_{a,1,\pm}$ \eqref{la2pm}
then emanate from $p_{a,1,\pm}$. The eigenvalues of the Jacobian matrix $$\partial_{(r_1,z_1,\epsilon_1)} \begin{pmatrix}
                                                                                                          r_1' \\
                                                                                                          z_1'\\
                                                                                                          \epsilon_1'
                                                                                                         \end{pmatrix}(p_{a,1,\pm}),$$
 are
 \begin{align}
  \mu_{1,\pm}=\lambda_{\pm},\,\mu_{2,\pm}=\mp \sqrt{(c-\gamma)^2+4b\beta},\mu_{3,\pm}=-2\lambda_{\pm}.\eqlab{EigenvaluesChartK2}
 \end{align}
  The eigenspace associated with the first eigenvalue $\mu_{1,\pm}$ is spanned by a vector $v_{1,\pm}$ contained within the $(r_1,z_1)$-plane, whereas the eigenspaces associated with the remaining two eigenvalues $\mu_{2,\pm}$ and $\mu_{3,\pm}$ are spanned by the vectors $$v_{2}=(0,1,0),$$ and $$v_3=(0,0,1),$$ respectively. The flow in the $\epsilon_1$-direction on $l_{a,1,\pm}$ (and $C_{a,1}$) is determined by the sign of $\mu_{3,\pm}$. The lines correspond to primary singular canards if $\mu_{3,\pm}>0$ and faux singular canards if $\mu_{3,\pm}<0$ (compare with \propref{EqSlidingVectorField}).

Tables 1 and 2 summarize the existence of $p_{a,1,\pm}$ as hyperbolic equilibria for the reduced equations \eqref{centerManifoldK2ReducedEqns} and the properties of their stable and unstable manifolds.


\begin{table}
\centering
\begin{tabular}{|c|c|c|c|c|c|c||}
 \hline 
 \hline
Type & Condition & Equilibrium & $(\mu_1,\mu_2,\mu_3)$ & Stable manifold & Unstable manifold \\
\hline
\hline
V & None & $p_{a,1,-}$ & $(-,+,+)$ & Primary singular canard & $C_{a,1}\supset l_{a,1,-}$ \\
\hline
I & None & $p_{a,1,+}$ & $(+,-,-)$ &  $C_{a,1}\supset l_{a,1,+}$ & Faux singular canard\\
\hline
VI & $b>0$ & $p_{a,1,+}$ & $(+,-,-)$ & $C_{a,1}\supset l_{a,1,+}$ & Singular faux canard\\
 & & $p_{a,1,-}$ & $(-,+,+)$ & Primary singular canard & $C_{a,1}\supset l_{a,1,-}$\\
 \hline
 \hline
\end{tabular}
\caption{The hyperbolic equilibria $p_{a,1,\pm}$ for case (S). Column 1: type of two-fold singularity [visible (V), invisible (I) and visible-invisible (VI)]. Column 2: conditions on parameters. Column 3: which equilibria exist. Column 4: the sign of the eigenvalues $\mu_{1\pm},\mu_{2\pm},\mu_{3\pm}$. Column 5: the stable manifolds of $p_{a,1,\pm}$ as a hyperbolic equilibrium of the reduced equations \eqref{centerManifoldK2ReducedEqns}. Column 6: as column 5, for the unstable manifolds. In visible-invisible case $p_{a,1,\pm}$ co-exist if and only if $b>0$. }\tablab{tblS}
\end{table}

\begin{table}
\centering
\begin{tabular}{|c|c|c|c|c|c|c|||}
 \hline 
 \hline
Type & Condition & Equilibrium & $(\mu_1,\mu_2,\mu_3)$ & Stable manifold & Unstable manifold \\
\hline
\hline
V & None & $p_{a,1,-}$ & $(-,+,+)$ & Strong singular canard & $C_{a,1}\supset l_{a,1,-}$ \\
\hline
I & None & $p_{a,1,+}$ & $(-,-,+)$ &  $S_{a,1}$: sector of & $l_{a,1,+}$\\
  &      &             &           & weak singular canards &  \\
\hline
VI & $b>0$ & $p_{a,1,+}$ & $(-,-,+)$ &  $S_{a,1}$: sector of & $l_{a,1,+}$\\
  &      &             &           & weak singular canards &  \\
    & & $p_{a,1,-}$ & $(-,+,+)$ & Strong singular canard & $C_{a,1}\supset l_{a,1,-}$ \\
 \hline
 \hline
\end{tabular}
\caption{The hyperbolic equilibria $p_{a,1,\pm}$ for case (N). Column headings as in \tabref{tblS}. All canards in case (N) are primary canards.}\tablab{tblN}
\end{table}

We conclude this section with a proposition (compare with \cite[Proposition 4.1]{szmolyan_canards_2001}) that summarizes the findings in chart $\kappa_1$:
\begin{proposition}\proplab{centerManifoldK2}
Within a small neighborhood of the invariant line $L_{a,1}$, the following statements hold true: There exists a $3D$ attracting center manifold $M_{a,1}$ of $L_{a,1}$ for Eqs. \eqref{eqnChartK2} that takes the following form:
\begin{eqnarray}
 w=-\vert \beta \vert^{-1} {\vert b\vert} z_1+\mathcal O(r_1+\epsilon_1).\eqlab{Ma2Graph}
\end{eqnarray}
$M_{a,1}$ is foliated by $M_{a,1}(\epsilon)$ corresponding to $M_{a,1}\cap \{\epsilon=r_1^2\epsilon_1=\text{const}\}.$ 
 The center manifold includes $S_{a,1}$ contained within $\epsilon_1=0$ as a manifold of equilibria and $C_{a,1}$ contained within $r_1=0$ as a center sub-manifold. The former corresponds to the critical manifold $S_a$.  The latter contains the invariant lines $l_{a,1,\pm}$ \eqref{la2pm} if $\chi_{\pm}<0$. The lines $l_{a,1,\pm}$ emanate from $p_{a,1,\pm}$ \eqref{pa2pm} which appear as hyperbolic equilibria of the reduced, de-singularized equations \eqref{centerManifoldK2ReducedEqns}. The manifold $C_{a,1}$ is: 
 \begin{itemize}
  \item Case (S): unique near $p_{a,1,-}$ and non-unique near $p_{2,a,\pm}$ (when these exist);
    \item Case (N): unique near $p_{a,1,\pm}$ (when these exist);
    \end{itemize}
as a center manifold of $L_{a,1}$ within $r_1=0$.


%
\end{proposition}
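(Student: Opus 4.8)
The plan is to extract every assertion from the linearization of the chart-$\kappa_1$ equations \eqref{eqnChartK2} along the line of equilibria $L_{a,1}$, combined with centre manifold theory in the style of \cite{krupa_extending_2001,szmolyan_canards_2001}. First I would linearize the right-hand side of \eqref{eqnChartK2} at a point $(0,-\vert\beta\vert^{-1}\vert b\vert z_1,z_1,0)\in L_{a,1}$. Because $\dot r_1$, $\dot z_1$ and $\dot\epsilon_1$ each carry an overall factor of $\epsilon_1$, all entries of the Jacobian coming from those three rows vanish at $\epsilon_1=0$; the only nontrivial row is $\dot w$, where the bracket $-\vert b\vert z_1-\vert\beta\vert w$ vanishes on $L_{a,1}$, so that $\partial_w\dot w=-\vert\beta\vert f(w)$ and $\partial_{z_1}\dot w=-\vert b\vert f(w)$. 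The characteristic polynomial is then $\lambda^3(\lambda+\vert\beta\vert f(w))$, giving the spectrum $\{0,0,0,-\vert\beta\vert f(w)\}$; since $f(w)>0$ the single nonzero eigenvalue is negative and transverse to $L_{a,1}$ in the $w$-direction, with $r_1,z_1,\epsilon_1$ neutral. The centre manifold theorem therefore yields an attracting $3D$ centre manifold $M_{a,1}$ that is a graph $w=m(r_1,z_1,\epsilon_1)$; imposing $\dot w=0$ at $r_1=\epsilon_1=0$ fixes the leading order and reproduces \eqref{mCenterManifold}.

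Next I would record the invariant substructures. The blow-up weights make $\epsilon=r_1^2\epsilon_1$ a first integral of \eqref{eqnChartK2}: from the explicit $\dot r_1$ and $\dot\epsilon_1$ one checks directly that $\tfrac{d}{dt}(r_1^2\epsilon_1)=0$, which foliates $M_{a,1}$ into the leaves $M_{a,1}(\epsilon)$. Setting $\epsilon_1=0$ freezes $r_1$ and $z_1$ and leaves only the fast $w$-equation, whose zero set is the manifold of equilibria $S_{a,1}$; blowing down through $\mu_1$ \eqref{chartK2} identifies it with the sliding critical manifold $S_a$ \eqref{wSlowManifold}. Setting instead $r_1=0$ isolates the invariant subspace carrying the centre submanifold $C_{a,1}$, on which the reduced desingularized flow is \eqref{centerManifoldK2ReducedEqns}. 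The $z_1$-nullcline of that flow at $r_1=\epsilon_1=0$ is exactly the quadratic \eqref{chieqn}, whose roots are $\chi_\pm$, so the lines $l_{a,1,\pm}$ \eqref{la2pm} are invariant precisely when $\chi_\pm<0$ (which makes $w>0$ as \eqref{wTransformation} demands) and emanate from the equilibria $p_{a,1,\pm}$ \eqref{pa2pm}. A direct linearization of \eqref{centerManifoldK2ReducedEqns} at $p_{a,1,\pm}$ produces the eigenvalues \eqref{EigenvaluesChartK2}: differentiating the quadratic in \eqref{chieqn} gives $\pm\sqrt{(c-\gamma)^2+4b\beta}$ at $z_1=\chi_\pm$, while $G(p_{a,1,\pm})=-\lambda_\pm$ by \eqref{chipm}, so that $(\mu_{1,\pm},\mu_{2,\pm},\mu_{3,\pm})=(\lambda_\pm,\mp\sqrt{(c-\gamma)^2+4b\beta},-2\lambda_\pm)$. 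Under the standing discriminant assumption and with the degenerate (SN) case excluded (cf. \remref{excludingCases}), all three are nonzero, so $p_{a,1,\pm}$ are hyperbolic; their existence with $\chi_\pm<0$ is governed by the sign table already established in \propref{EqSlidingVectorField}.

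The delicate point, which I expect to be the main obstacle, is the uniqueness dichotomy for $C_{a,1}$. A centre manifold attached to a line of equilibria is never unique by the bare centre manifold theorem; uniqueness must instead be argued through the direction of the flow along the neutral $\epsilon_1$-direction, exactly as for the fold line (\figref{centerManifoldCa2FoldLine}). If trajectories on $C_{a,1}$ move away from $\{\epsilon_1=0\}$ the manifold is overflowing and therefore unique, whereas if they move towards it an exponentially-close family of centre manifolds persists. The governing eigenvalue is $\mu_{3,\pm}=-2\lambda_\pm$. In case (S), $\lambda_-<0<\lambda_+$ gives $\mu_{3,-}>0$ (unique near $p_{a,1,-}$) and $\mu_{3,+}<0$ (non-unique near $p_{a,1,+}$); in case (N), $\lambda_-<\lambda_+<0$ gives $\mu_{3,\pm}>0$ and uniqueness near both equilibria, matching the stated dichotomy. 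To make this rigorous I would invoke the uniqueness of overflowing invariant manifolds in the spirit of \cite[Proposition 4.1]{szmolyan_canards_2001}, rather than the centre manifold theorem alone, which is the technical heart of the argument.
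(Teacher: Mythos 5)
Your proposal is correct and follows essentially the same route as the paper: linearization along $L_{a,1}$ with spectrum $\{0,0,0,-\vert\beta\vert f(w)\}$ and the center manifold theorem giving $M_{a,1}$ as a graph, the conserved quantity $\epsilon=r_1^2\epsilon_1$ providing the foliation, the invariant subspaces $\epsilon_1=0$ and $r_1=0$ carrying $S_{a,1}$ and $C_{a,1}$, the lines $l_{a,1,\pm}$ obtained from the quadratic \eqref{chieqn}, the eigenvalues \eqref{EigenvaluesChartK2} of the desingularized reduced flow, and the uniqueness dichotomy for $C_{a,1}$ read off from the sign of $\mu_{3,\pm}=-2\lambda_{\pm}$ (overflowing versus inflowing in the $\epsilon_1$-direction), exactly as in the paper's fold-line analysis and its appeal to \cite{szmolyan_canards_2001}. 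One small inaccuracy: the $\dot z_1$-row of the Jacobian does not vanish identically on $L_{a,1}$, since $\partial_{\epsilon_1}\dot z_1=\text{sign}(b)+\vert b\vert^{-1}\gamma w+F_1 z_1$ is nonzero away from $z_1=\chi_{\pm}$; however, this entry is off-diagonal and nilpotent, so the characteristic polynomial is still $\lambda^3\left(\lambda+\vert\beta\vert f(w)\right)$ and none of your conclusions are affected.
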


It follows from this proposition that every primary (faux) singular canard lies within the unique (non-unique) part of the center manifold $C_{a,1}$ (see \figref{centerManifoldCa2}). Note the similarity between \figref{centerManifoldCa2FoldLine} for the fold and \figref{centerManifoldCa2} for the two-fold. We illustrate the dynamics within $M_{a,1}$ \eqref{Ma2Graph} for case (S) in \figref{centerManifoldMa2S} and for case (N) in \figref{centerManifoldMa2N}. Here $S_{a,1}$ and $C_{a,1}$ are identified as invariant sub-manifolds. In particular the motion on $S_{a,1}$ is compared to the analysis of the piecewise smooth systems. 
\begin{remark}\remlab{noname}
The singular canards described in \propref{singularCanards} for the piecewise smooth system are identified within $S_{a,1}$ as trajectories asymptotic to $p_{a,1,\pm}$, since (a) $S_{a,1}=M_{a,1}\cap \{\epsilon_1=0\}$ is $S_a$ written in chart $\kappa_1$ and (b) the slow flow on $S_a$ coincides with the sliding vector field (cf. \thmref{criticalManifold}). See \figref{centerManifoldMa2S} and \figref{centerManifoldMa2N} and recall also section \secref{connection}. 
 An important consequence of \propref{centerManifoldK2} is the fact that we can continue singular canards (even a whole sector of singular canards) within $S_{a,1}$ into chart $\kappa_2$ using a single trajectory $l_{a,1,\pm}$. This is the reason why we obtain only one canard for $\epsilon$ sufficiently small from a whole sector of weak singular canards in \thmref{mainRes}. 
\end{remark}

\begin{figure}[h!] 
\begin{center}
\subfigure[]{\includegraphics[width=.45\textwidth]{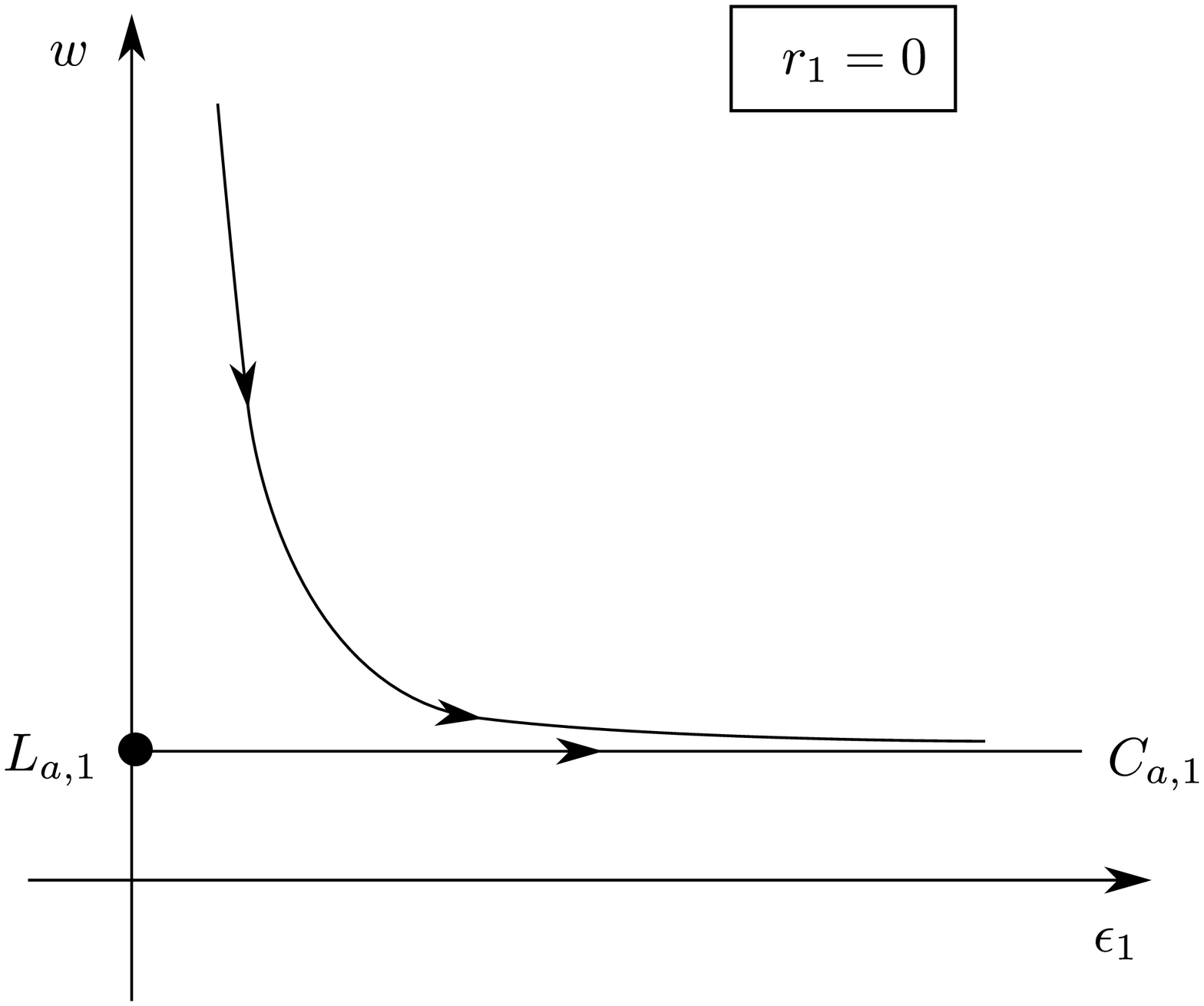}}
\subfigure[]{\includegraphics[width=.45\textwidth]{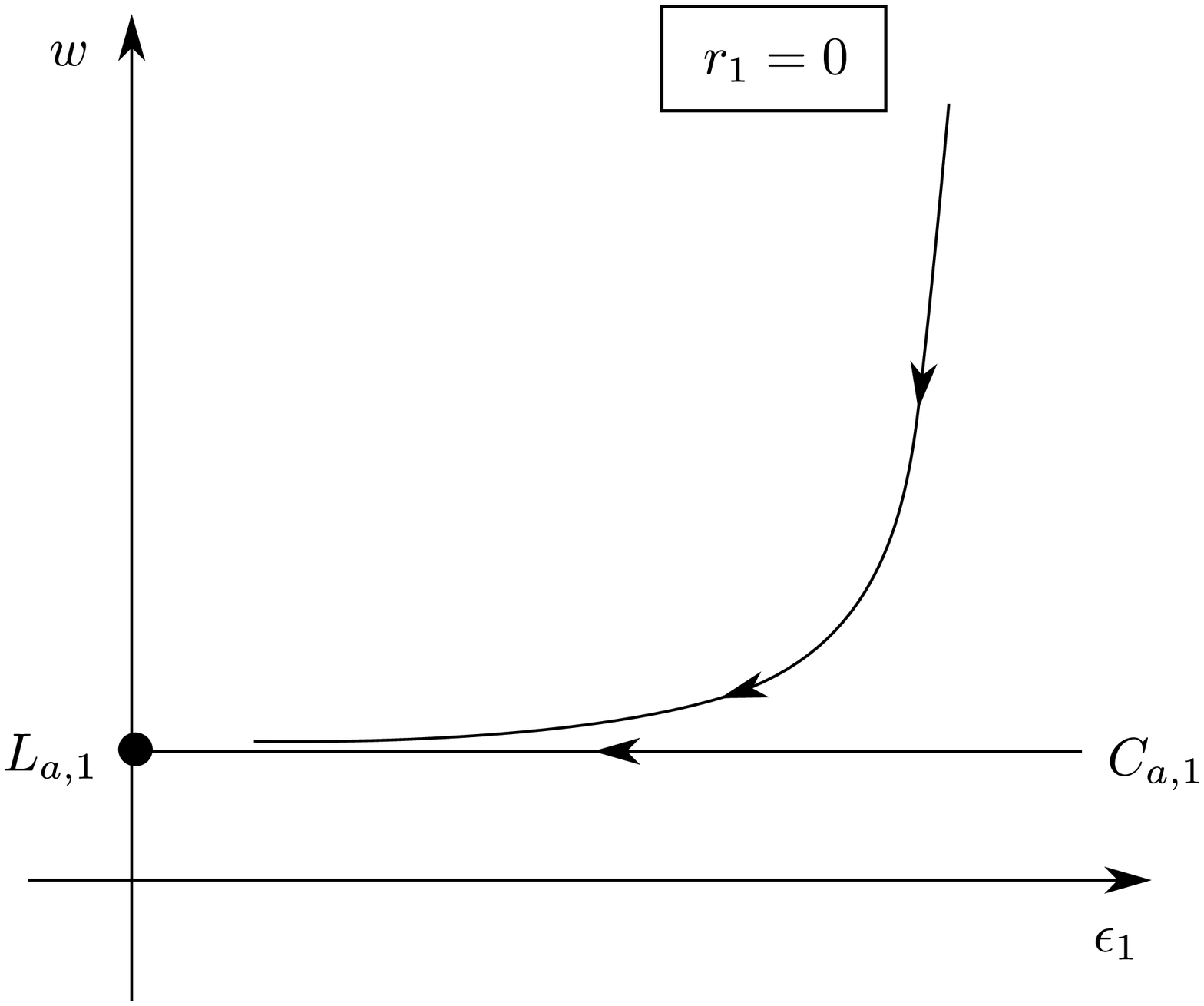}}
\end{center}
\caption{Illustration of the attracting center manifold $C_{a,1}$ within $r_1=0$ using a projection onto the $(\epsilon_1,w)$-plane. (a) The motion on $C_{a,1}$ is away from $L_{a,1}$ (which is a point in this projection) and $C_{a,1}$ is unique. This situation corresponds to a primary singular canard. (b) The motion on $C_{a,1}$ is towards $L_{a,1}$ and $C_{a,1}$ is non-unique. This situation corresponds to a faux singular canard. }
\figlab{centerManifoldCa2}
\end{figure}
\begin{figure}[p!] 
\begin{center}
\subfigure[Visible]{\includegraphics[width=.54\textwidth]{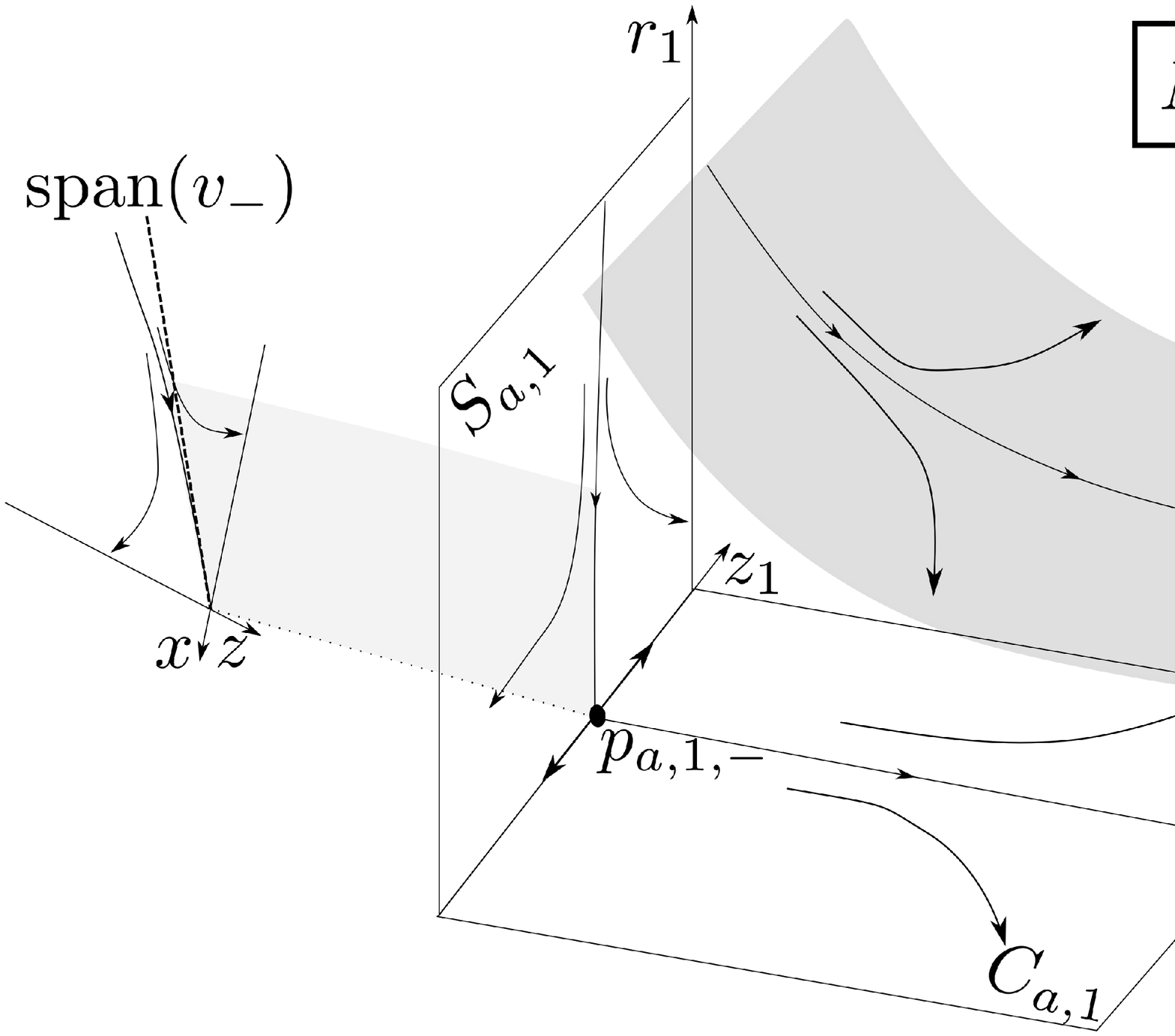}}
\subfigure[Invisible]{\includegraphics[width=.54\textwidth]{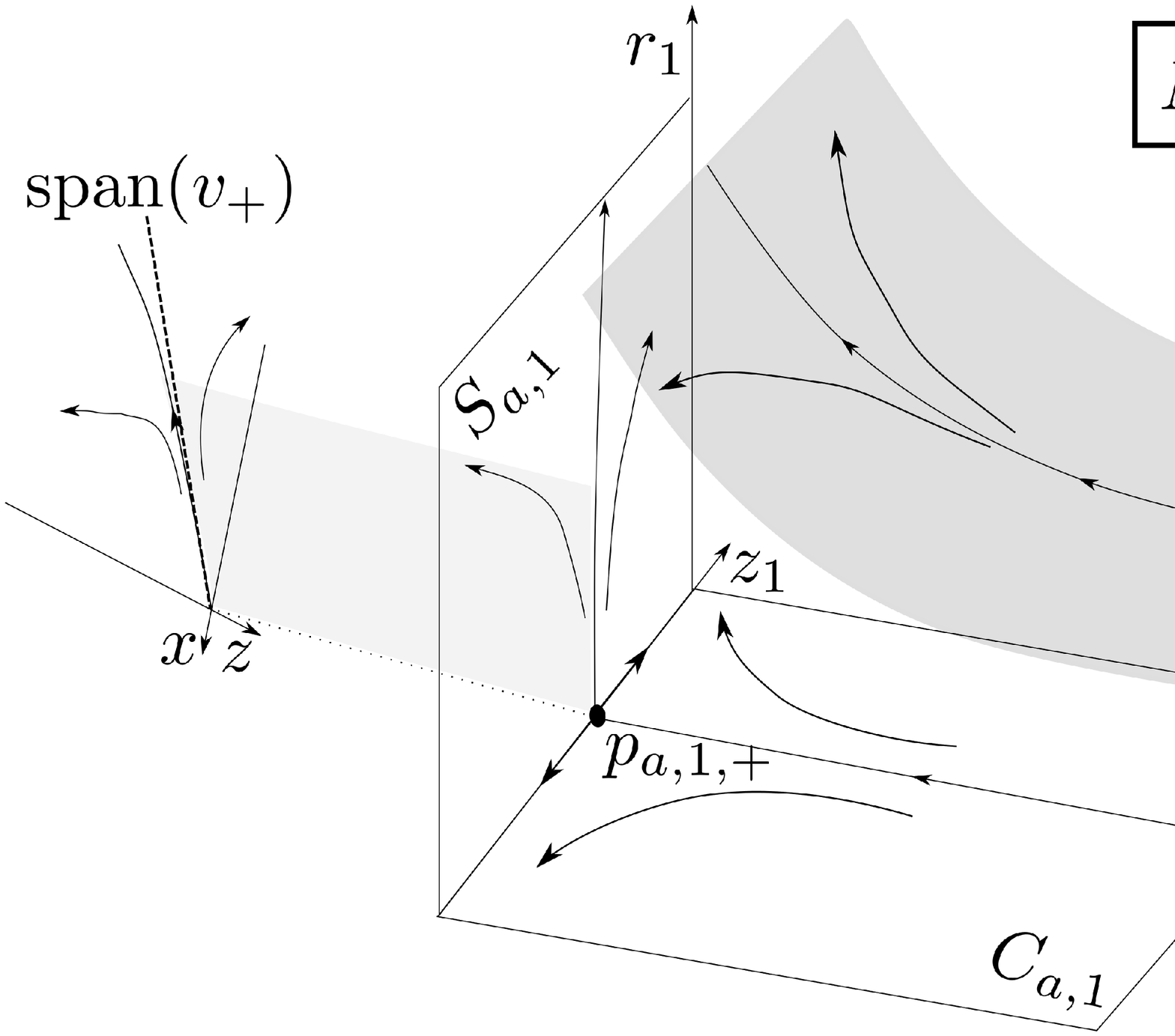}}
\subfigure[Visible-invisible]{\includegraphics[width=.54\textwidth]{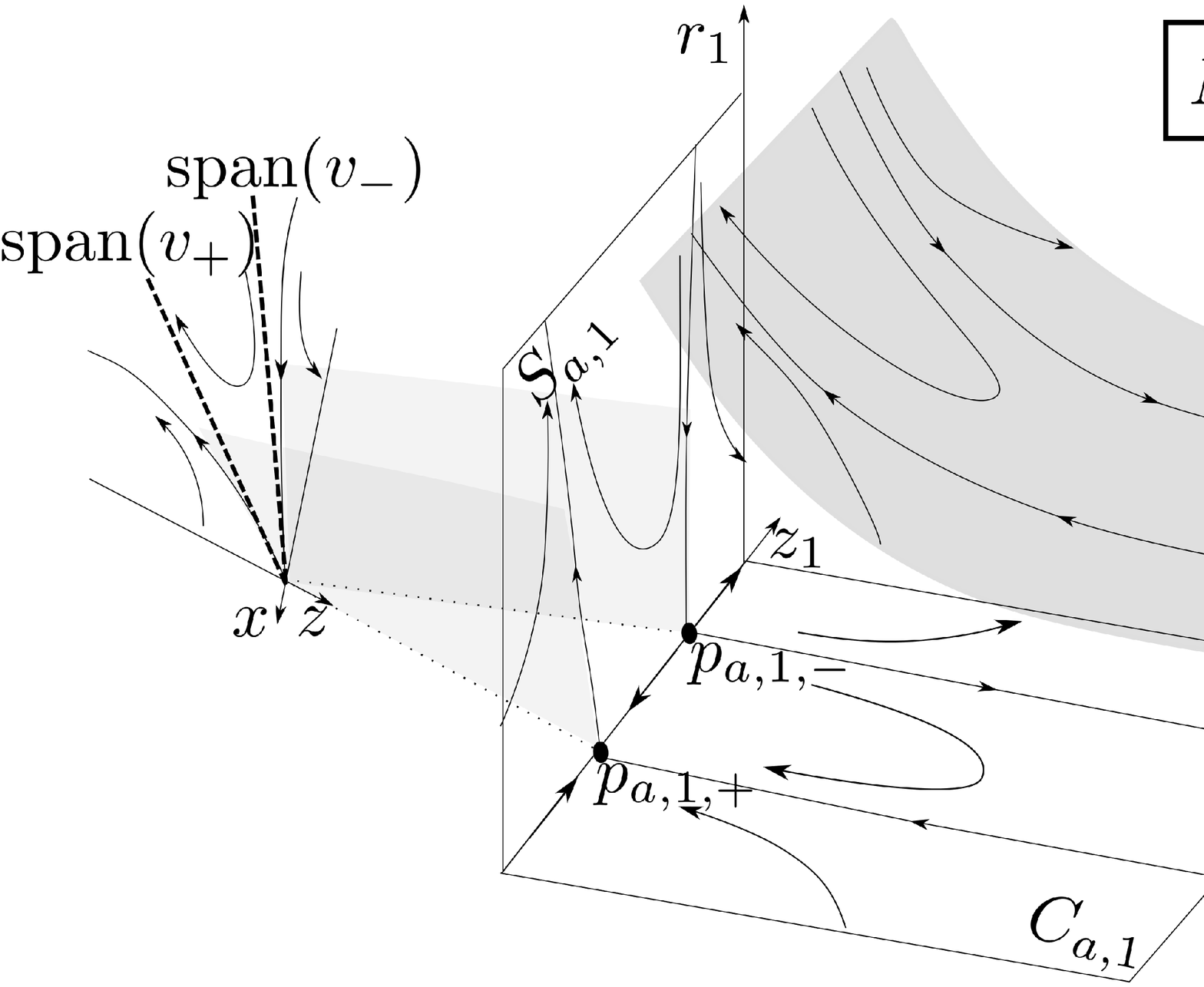}}
\end{center}
\caption{Case (S): Illustrations of the de-singularized dynamics within the attracting center manifold $M_{a,1}$ and the corresponding sliding motions within the different piecewise smooth systems (shown on the left of each illustration). The curved shaded regions within $(r_1,z_1,\epsilon_1)$-space illustrate the invariant sub-manifolds with $\epsilon=\text{const}.$}
\figlab{centerManifoldMa2S}
\end{figure}
\begin{figure}[p!] 
\begin{center}
\subfigure[Visible]{\includegraphics[width=.54\textwidth]{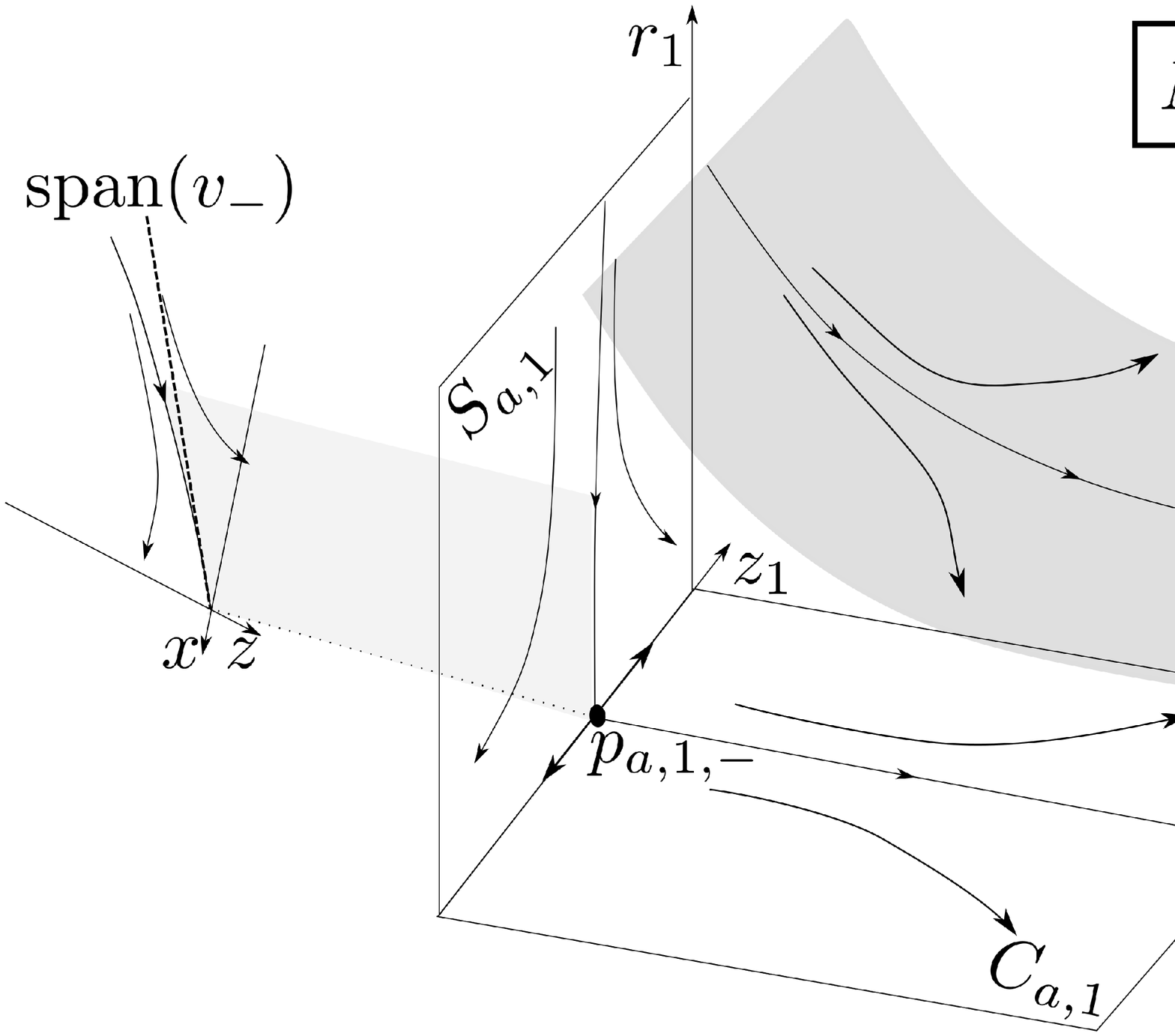}}
\subfigure[Invisible]{\includegraphics[width=.54\textwidth]{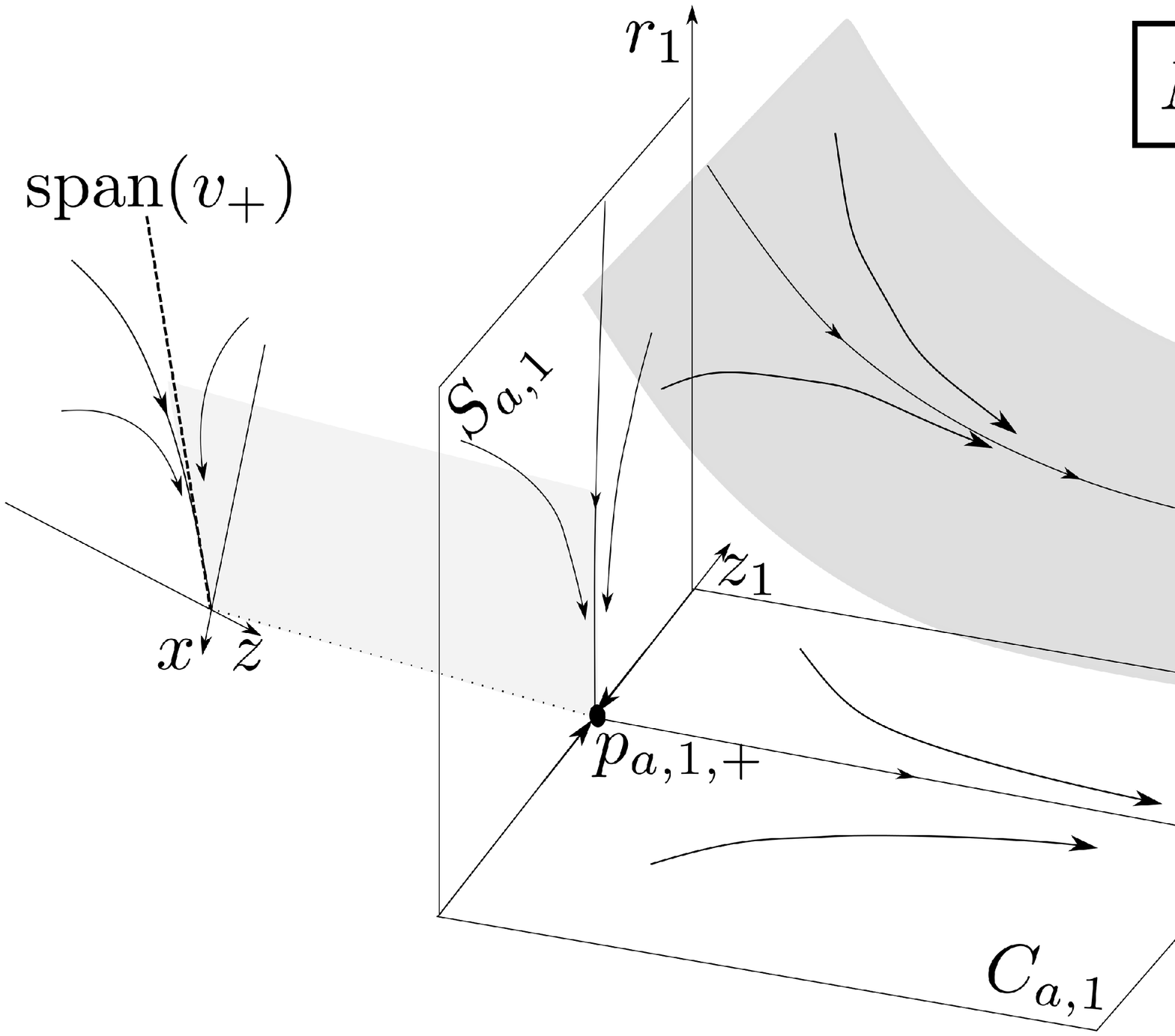}}
\subfigure[Visible-invisible]{\includegraphics[width=.54\textwidth]{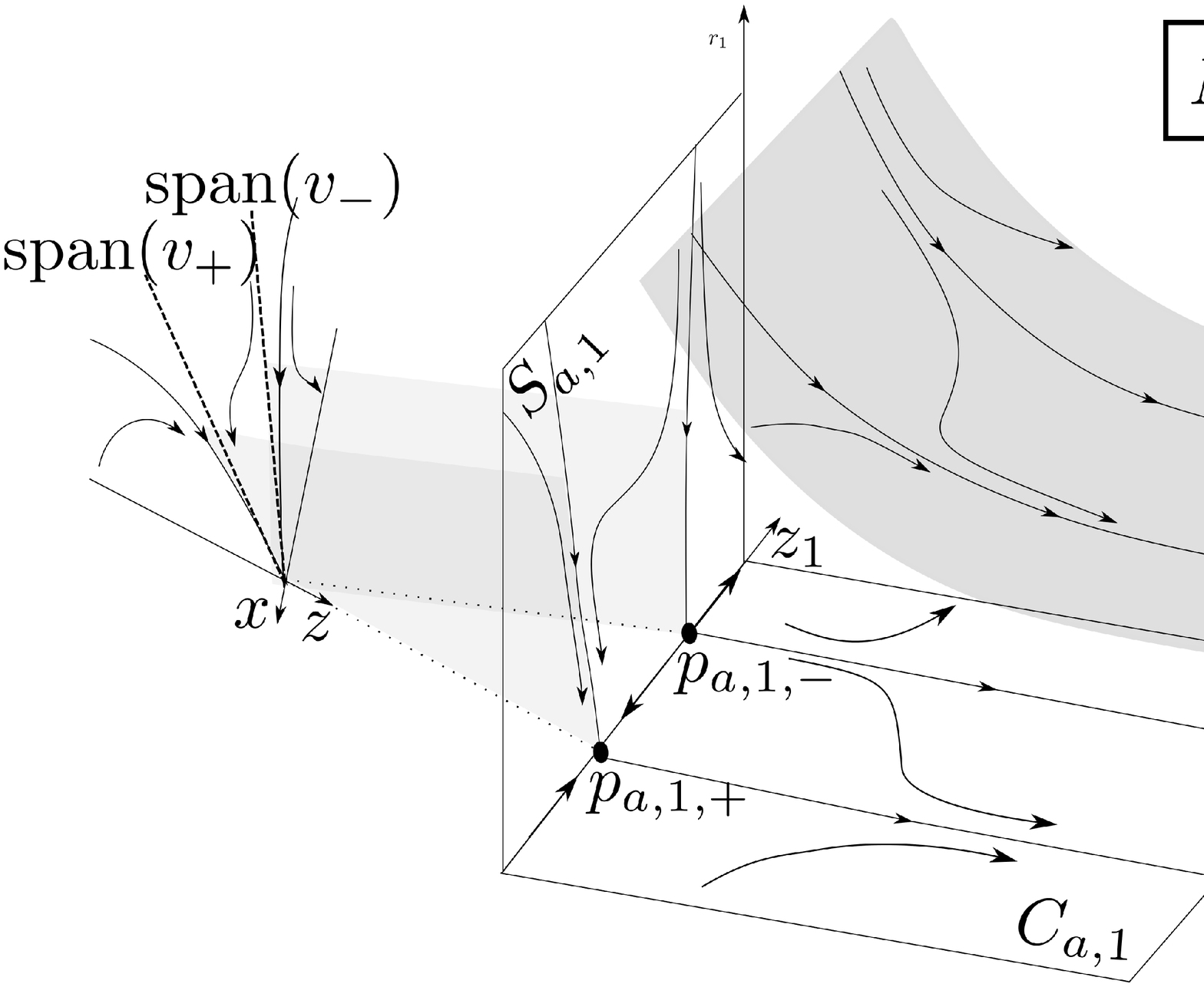}}
\end{center}
\caption{As \figref{centerManifoldMa2S} for Case (N).}
\figlab{centerManifoldMa2N}
\end{figure}

\subsection{Dynamics in chart $\kappa_2$}
In this chart we obtain the following equations
\begin{eqnarray}
\dot x_2 &=&\vert \beta\vert^{-1} c+\text{sign}(\beta)w+O(r_2),\eqlab{ss}\\
\dot w &=&f(w)\left(-\vert b\vert {z}_2 +\vert \beta\vert x_2w +O(r_2)\right),\nonumber\\
 \dot z_2&=&\text{sign}(b)+\vert b\vert^{-1} \gamma w+O(r_2),\nonumber\\
 \dot r_2&=&0. \nonumber
\end{eqnarray}
where we have re-scaled time by $r_2$. Then we have the following:
\begin{lemma}\label{gamma1lemma}
 Suppose $\chi_{\pm}<0$. Then there exists an invariant line :
\begin{align}
 l_{2,\pm}: \,(x_2,w,z_2,r_2)=(x_2,-\vert \beta \vert^{-1} {\vert b\vert} \chi_{\pm},-\chi_{\pm}x_2,0),\,x_2\in \mathbb R,\eqlab{l2pmEq}
 \end{align}
which coincides with the image of $l_{a,1,\pm}$ under $\kappa_{21}$ where this is defined. 
The motion on $l_{2,\pm}$ is determined by 
\begin{align*}
 \dot x_2&=-\vert \beta \vert^{-1} \lambda_{\pm},\\
 \dot w&=0.
\end{align*}
\end{lemma}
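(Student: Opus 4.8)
The plan is to exhibit the line explicitly, verify directly that the two scalar relations defining it are preserved by the flow \eqref{ss} restricted to the invariant plane $r_2=0$, and then read off the induced one-dimensional dynamics. Since $\dot r_2=0$, the plane $r_2=0$ is invariant, so throughout I work there and drop the $O(r_2)$ terms. First I would write the candidate line as the common zero set of $g_1 := w + \vert\beta\vert^{-1}\vert b\vert\chi_\pm$ and $g_2 := z_2 + \chi_\pm x_2$, parametrized by $x_2\in\mathbb R$. To prove invariance it suffices to show $\dot g_1=\dot g_2=0$ on $\{g_1=g_2=0\}$.

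For $g_1$: substituting $z_2=-\chi_\pm x_2$ and $w=w_\pm:=-\vert\beta\vert^{-1}\vert b\vert\chi_\pm$ into the $w$-equation of \eqref{ss} gives $\dot w = f(w_\pm)\,x_2\,(\vert b\vert\chi_\pm + \vert\beta\vert w_\pm)$, and the bracket vanishes identically because $\vert\beta\vert w_\pm = -\vert b\vert\chi_\pm$ by the definition of $w_\pm$; hence $w\equiv w_\pm$ is maintained. For $g_2$ I compute $\dot z_2+\chi_\pm\dot x_2$ along the set, where $\dot x_2=\vert\beta\vert^{-1}c+\text{sign}(\beta)w_\pm$ and $\dot z_2=\text{sign}(b)+\vert b\vert^{-1}\gamma w_\pm$ are both constant (since $w=w_\pm$). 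The vanishing of $\dot g_2$ is then equivalent to the algebraic identity $\vert\beta\vert\,\text{sign}(b)=\chi_\pm(\lambda_\pm+\gamma)$, which is where I expect the only real work. I would derive it from the quadratic \eqref{chieqn} satisfied by $\chi_\pm$, writing $\vert\beta\vert\text{sign}(b)=\vert b\vert\text{sign}(\beta)\chi_\pm^2-(c-\gamma)\chi_\pm$, dividing by $\chi_\pm\ne 0$ (legitimate since $\chi_\pm<0$), and reducing the claim to $\vert b\vert\text{sign}(\beta)\chi_\pm=\lambda_\pm+c$, which is precisely the eigenvector relation \eqref{eqnchipm}. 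This closes the invariance argument; the main obstacle is thus purely the bookkeeping of combining \eqref{chieqn}, \eqref{eqnchipm}, and the sign conventions ($\text{sign}(\beta)\vert\beta\vert^{-1}=\beta^{-1}$) correctly.

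The induced flow is then immediate. On the line $\dot w=0$ by construction, and $\dot x_2=\vert\beta\vert^{-1}c+\text{sign}(\beta)w_\pm=\vert\beta\vert^{-1}\bigl(c-\text{sign}(\beta)\vert b\vert\chi_\pm\bigr)=-\vert\beta\vert^{-1}\lambda_\pm$, again using \eqref{eqnchipm}; this gives the stated motion, and the consistency $\dot z_2=-\chi_\pm\dot x_2$ is exactly the identity already verified.

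Finally I would check that $l_{2,\pm}$ \eqref{l2pmEq} is the image of $l_{a,1,\pm}$ \eqref{la2pm} under $\kappa_{21}$. Substituting the constant values $r_1=0$, $w=w_\pm$, $z_1=\chi_\pm$ of $l_{a,1,\pm}$ into $\kappa_{21}$ gives $x_2=-\epsilon_1^{-1/2}$, $z_2=\chi_\pm\epsilon_1^{-1/2}=-\chi_\pm x_2$, $r_2=0$, with $w=w_\pm$ unchanged. As $\epsilon_1$ ranges over $(0,\infty)$, $x_2$ sweeps $(-\infty,0)$, so the image is exactly the portion of $l_{2,\pm}$ with $x_2<0$, i.e. the chart overlap on which $\kappa_{21}$ is defined. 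This matches the qualification \emph{where this is defined} and completes the proof.
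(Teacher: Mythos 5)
Your proof is correct and takes essentially the same route as the paper: the motion on $l_{2,\pm}$ is read off from the eigenvector relation \eqref{eqnchipm} applied to \eqref{ss}, which is exactly the paper's (one-line) argument. You are in fact more thorough than the paper, which records only the $\dot x_2$ computation and leaves implicit both the invariance check (your $g_1,g_2$ computation, closed via the quadratic \eqref{chieqn}) and the verification that $\kappa_{21}(l_{a,1,\pm})$ is the $x_2<0$ portion of the line.
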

\begin{proof}
 For the motion on $l_{2,\pm}$ we use \eqsref{eqnchipm}{ss} to obtain
 \begin{align*}
  \dot x_2 &=-\vert \beta\vert^{-1} (-c+\text{sign}(\beta)\vert b\vert \chi_{\pm})=-\vert \beta\vert^{-1} \lambda_{\pm}.
 \end{align*}
\end{proof}

The invariant line $l_{2,\pm}$ intersects $x_2=0$ in the point 
\begin{align*}
q_{2,\pm} = (0,-\vert \beta \vert^{-1} {\vert b\vert} \chi_{\pm},0,0).
\end{align*}
The role of the invariant line $l_{2,\pm}$ is to carry the center manifold $M_{a,2}=\kappa_{21}(M_{a,1})$ up until $q_{2,\pm}$. In particular, $l_{2,\pm}$ carries the sub-manifold $C_{a,2}=\kappa_{21}(C_{a,1})$ up until $q_{2,\pm}$. 

The truncation of our equations has a time-reversible symmetry: $$(x,w,z,t)\mapsto (-x,w,-z,-t).$$ So we can deduce the existence of a repelling center manifold $\overline M_{r}(\epsilon)$ that is an extension of $S_{r,\epsilon}$ close to the singularity. In particular, the invariant line $l_{2,\pm}$ will be forward asymptotic to the reflection of $p_{a,1,\pm}$. The reflection of $p_{a,1,\pm}$ will be contained in a reflection of $C_{a,1}$ which we denote by $C_{r,2}$ in chart $\kappa_2$. The center manifold $C_{r,2}$ is obtained by applying a symmetry and it is therefore unique if and only if $C_{a,2}$ is unique. 

In the next section we will investigate the transverse intersection of the tangent spaces $T_{q_{2,\pm}} C_{a,2}$ and $T_{q_{2,\pm}} C_{r,2}$. We will apply regular perturbation theory in chart $\kappa_2$ to conclude that $M_{a,2}(\epsilon)$ and $M_{r,2}(\epsilon)$ are $r_2$-close ($r_2=\sqrt{\epsilon}$) to $C_{a,2}$ and $C_{r,2}$ respectively. The transverse intersection of the tangent spaces $T_{q_{2,\pm}} C_{a,2}$ and $T_{q_{2,\pm}} C_{r,2}$ will therefore imply the transverse intersection of $\overline M_{a}(\epsilon)$ with $\overline M_{r}(\epsilon)$ (or simply $S_{a,\epsilon}$ with $S_{r,\epsilon}$) for sufficiently small $\epsilon$, and hence provide the existence of canards and so prove our main result, \thmref{mainRes}.

\subsection{The persistence of canards}
First we consider the persistence of faux singular canards. 
\begin{proposition}\proplab{existenceFauxCanards}
There exists an $\epsilon_0$ so that, for every $\epsilon\le \epsilon_0$, every  faux singular canard implies the existence of a two parameter family of faux canard solutions.
\end{proposition}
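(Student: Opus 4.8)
The plan is to exploit the fact that, in the faux case, the relevant center manifold is \emph{non-unique}, and to combine this with the time-reversible symmetry of the truncated equations to produce an open set of connecting trajectories. First I would record the defining feature: a faux singular canard is backward-asymptotic to $p$ and so, in chart $\kappa_1$, corresponds to an invariant line $l_{a,1,\pm}$ whose $\epsilon_1$-eigenvalue satisfies $\mu_{3,\pm}<0$ (see \eqref{EigenvaluesChartK2}), i.e. the flow on $C_{a,1}$ points toward $L_{a,1}$ in the $\epsilon_1$-direction. By \propref{centerManifoldK2} this is exactly the situation in which $C_{a,1}$, and hence $C_{a,2}=\kappa_{21}(C_{a,1})$, is non-unique as a center manifold. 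This is the two-fold analogue of the invisible fold treated at the end of \secref{foldLines}.

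The key step is to upgrade this non-uniqueness into a statement about basins of attraction. Precisely as in the invisible-fold conclusion, the normal attraction of the center manifold $M_{a,1}$ together with the contraction along $\epsilon_1$ implies that a full neighborhood (in chart $\kappa_2$) of the point $q_{2,\pm}$ where the invariant line $l_{2,\pm}$ \eqref{l2pmEq} meets $x_2=0$ is forward-asymptotic to the attracting slow manifold; that is, it lies in $W^s(S_{a,\epsilon})$ for all $\epsilon\le\epsilon_0$. Blowing down via $\mu_2$ \eqref{chartK1} then yields a genuine open neighborhood $\mathcal N_a$ of the two-fold contained in $W^s(S_{a,\epsilon})$.

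Next I would invoke the time-reversible symmetry $(x,w,z,t)\mapsto(-x,w,-z,-t)$ of the truncated equations, which maps $S_{a,\epsilon}$ to $S_{r,\epsilon}$ and $C_{a,2}$ to $C_{r,2}$ (and preserves non-uniqueness). Applying the symmetry to the previous step produces an open neighborhood $\mathcal N_r$ of the two-fold that is backward-asymptotic to the repelling slow manifold, i.e. $\mathcal N_r\subset W^u(S_{r,\epsilon})$. Since $\mathcal N_a$ and $\mathcal N_r$ are both open neighborhoods of $p$, their intersection $\mathcal O=\mathcal N_a\cap\mathcal N_r$ is nonempty and open. Every trajectory through $\mathcal O$ is forward-asymptotic to $S_{a,\epsilon}$ and backward-asymptotic to $S_{r,\epsilon}$, hence is a faux canard in the sense of \secref{connection}. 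As $\mathcal O$ is open in the three-dimensional phase space while trajectories are one-dimensional, the trajectories through $\mathcal O$ form a two-parameter family, which is the asserted conclusion; note that no transversality argument of the kind needed for \thmref{mainRes} is required here, precisely because the attraction is complete.

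The main obstacle is the upgrading step: one must ensure that the strong contraction transverse to the attracting slow manifold persists \emph{uniformly in} $\epsilon$ all the way up to the two-fold, where Fenichel theory itself fails, and that the resulting basin $\mathcal N_a$ (and by symmetry $\mathcal N_r$) contains a neighborhood of $p$ whose size is independent of $\epsilon$, so that $\mathcal N_a\cap\mathcal N_r$ is genuinely open. This demands that the chart-$\kappa_1$ estimates hold on an $\epsilon$-independent neighborhood of $l_{a,1,\pm}$ and that they be transported into the scaling chart $\kappa_2$ through $\kappa_{12}$ — the same bookkeeping already carried out for the invisible fold in \secref{foldLines}, so here it reduces to repeating that argument with $l_{2,\pm}$ and $q_{2,\pm}$ in place of the fold line and its crossing point.
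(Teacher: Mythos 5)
Your proposal is correct and takes essentially the same route as the paper: the paper's proof simply cites \cite[Proposition 4.3]{szmolyan_canards_2001}, noting that the argument rests on the invariant line $l_{2,\pm}$ lying in the non-unique parts of $C_{a,2}$ and $C_{r,2}$, which is precisely the mechanism you unpack (non-uniqueness $=$ attraction of a full neighborhood of trajectories, on both sides via the symmetry, yielding an open set of faux canards and hence a two-parameter family). The only slight imprecision is calling the blown-down sets neighborhoods of the two-fold itself --- they are neighborhoods of the blow-down of $q_{2,\pm}$, which suffices since both sets surround that same point --- and consequently your worry about $\epsilon$-independent size is unnecessary, as the statement only requires an open set of faux canards for each fixed $\epsilon\le\epsilon_0$.
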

\begin{proof}
 The proof is identical to that of \cite[Proposition 4.3]{szmolyan_canards_2001}. It is due to the fact that the invariant line $l_{2,\pm}$ is contained within the non-unique parts of $C_{a,2}$ and $C_{r,2}$. 
\end{proof}

Next, we focus on the persistence of primary singular canards. From \propref{EqSlidingVectorField} these correspond to equilibria of \eqref{slidingVectorFieldNewTime} with $\lambda_{\pm}<0$. From \propref{centerManifoldK2} we can conclude that the invariant line $l_{2,\pm}$ is contained within the unique parts of $C_{a,2}$ and $C_{r,2}$. To analyse the transverse intersection of the tangent spaces $T_{q_{2,\pm}}C_{a,2}$ and $T_{q_{2,\pm}} C_{r,2}$ we have to analyse the variational equations about the invariant line $l_{2,\pm}$. To do this we apply the transformation $(x_2,w,z_2)\mapsto (\tilde x_2,\tilde w,\tilde z_2)=(x_2,w+\vert \beta\vert^{-1} \vert b\vert \chi_{\pm},z_2+\chi_{\pm}x_2)$. This moves $l_{2,\pm}$ to the $\tilde x_2$-axis. We obtain the following equations
\begin{align*}
 \dot{\tilde x}_2 &=-\vert \beta \vert^{-1} \lambda_{\pm} +\text{sign}(\beta)\tilde w,\\
 \dot{\tilde w}&=f(w)(-\vert b\vert \tilde z_2+\vert \beta \vert \tilde x_2 \tilde w),\\
 \dot{\tilde z}_2&=-\vert b\vert^{-1} \lambda_{\mp} \tilde w,
\end{align*}
with $f(w)=f(\tilde w-\vert \beta\vert^{-1} \vert b\vert \chi_{\pm})$. To obtain the expression for $\dot{\tilde z}_2$ we have used \eqsref{lambdapm}{chipm} to conclude that
\begin{align*}
 -\chi_{\pm}\text{sign}(\beta)\vert b\vert - \gamma = \lambda_{\mp}.
\end{align*}
In these variables, the dynamics on $l_{2,\pm}$ is given by 
\begin{align*}
 (\tilde x_2,\tilde w,\tilde z_2) = (-\vert \beta \vert^{-1} \lambda_{\pm}t,0,0). 
\end{align*}
We then take the variations about this solution to obtain:
 \begin{align}
  \frac{d u}{dx_2} &= -{\lambda_{\pm}}^{-1}{f(w)\vert \beta\vert} \left(-\vert b\vert v+\vert \beta\vert x_2 u\right),\eqlab{uvVariationalEqs}\\
  \frac{d v}{dx_2}&= \vert \beta \vert \vert b\vert^{-1} \lambda_{\pm}^{-1} \lambda_{\mp} u,\nonumber
 \end{align}
having replaced time by $x_2$. Here $(u,v)=(\delta \tilde w,\delta \tilde z_2) =(\delta w,\delta z_2)$. The time-reversible symmetry then becomes an invariance of these equations with respect to $(u,v,x_2)\mapsto (u,-v,-x_2)$. 

We now follow the reasoning in \cite{szmolyan_canards_2001}. In particular, we will make use of the following lemma from \cite[Proposition 4.2]{szmolyan_canards_2001}:
\begin{lemma}\lemmalab{transversality}
The tangent spaces $T_{q_{2,\pm}} C_{a,2}$ and $T_{q_{2,\pm}} C_{r,2}$ are transverse if and only if there exists no non-zero solution of \eqref{uvVariationalEqs} which has algebraic growth for $x_2\rightarrow \pm \infty$.
\end{lemma}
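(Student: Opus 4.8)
The plan is to reduce the statement about transversality of the two two-dimensional tangent spaces to a statement about the growth of solutions of the planar variational system \eqref{uvVariationalEqs}, following the strategy of \cite[Proposition 4.2]{szmolyan_canards_2001}. First I would record a dimension count. By \propref{centerManifoldK2} and the construction of $C_{r,2}$ as the image of $C_{a,2}$ under the time-reversible symmetry, both center manifolds contain the invariant line $l_{2,\pm}$, and hence both tangent spaces $T_{q_{2,\pm}}C_{a,2}$ and $T_{q_{2,\pm}}C_{r,2}$ contain the tangent to $l_{2,\pm}$ at $q_{2,\pm}$, i.e. the $x_2$-direction once $l_{2,\pm}$ has been moved to the $\tilde x_2$-axis. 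Two such planes in the three-dimensional phase space are transverse if and only if their images under the projection onto the transverse $(u,v)=(\delta w,\delta z_2)$-plane are distinct one-dimensional subspaces. This reduces the problem to comparing two lines in the $(u,v)$-plane.

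Next I would identify each of these lines with a solution of \eqref{uvVariationalEqs}. Since $C_{a,2}$ is an invariant manifold containing $l_{2,\pm}$, its field of transverse tangent directions along the line is invariant under the linearised flow and is therefore spanned by a single solution $(u_a(x_2),v_a(x_2))$ of \eqref{uvVariationalEqs}. Because $C_{a,2}=\kappa_{21}(C_{a,1})$ is anchored at the hyperbolic equilibrium $p_{a,1,\pm}$, which corresponds to $x_2\to-\infty$ in chart $\kappa_2$, this $(u_a,v_a)$ is precisely the solution with at most algebraic growth as $x_2\to-\infty$: along $l_{2,\pm}$ the coefficient $f(w)$ is constant, so the term $\propto x_2 u$ in \eqref{uvVariationalEqs} gives the variational system Weber (parabolic-cylinder) character, and the generic solution grows like $\exp(c\,x_2^2)$, which is incompatible with tangency to a center manifold. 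Applying the symmetry, which acts on the variational variables as $(u,v,x_2)\mapsto(u,-v,-x_2)$, the corresponding solution for $C_{r,2}$ is $(u_r(x_2),v_r(x_2))=(u_a(-x_2),-v_a(-x_2))$, the unique (up to scale) solution with at most algebraic growth as $x_2\to+\infty$.

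Finally I would compare the two directions at $x_2=0$. The projected lines coincide exactly when $(u_a(0),v_a(0))$ and $(u_a(0),-v_a(0))$ are parallel, i.e. when $u_a(0)v_a(0)=0$. By uniqueness of solutions of \eqref{uvVariationalEqs} this forces $(u_a,v_a)$ to have a definite parity ($u_a$ even and $v_a$ odd, or $u_a$ odd and $v_a$ even), so that algebraic boundedness as $x_2\to-\infty$ propagates to $x_2\to+\infty$ and yields a non-zero two-sided algebraic solution. Conversely, any non-zero solution with algebraic growth as $x_2\to\pm\infty$ must, by the uniqueness of the one-sided algebraically bounded solution, agree up to scale with both $(u_a,v_a)$ and $(u_r,v_r)$, whence the two tangent lines coincide and the spaces fail to be transverse. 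This establishes the claimed equivalence.

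I expect the main obstacle to be the second step: rigorously justifying that the center-manifold tangent direction is exactly the algebraically growing solution and that the dichotomy between algebraic and super-algebraic (Weber-type) growth is clean. This requires analysing \eqref{uvVariationalEqs} near $p_{a,1,\pm}$ through the eigenvalue structure \eqref{EigenvaluesChartK2}, together with an exponential-dichotomy or invariant-foliation argument separating the sub-exponential (center) directions from the exponentially growing ones. This is precisely the technical heart of the corresponding result in \cite{szmolyan_canards_2001}, whose argument I would adapt to the present equations.
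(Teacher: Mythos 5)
Your proposal is correct and follows essentially the same route as the paper's proof: both reduce transversality of $T_{q_{2,\pm}}C_{a,2}$ and $T_{q_{2,\pm}}C_{r,2}$ to the dichotomy that variations tangent to the (unique) center manifolds are exactly the solutions of \eqref{uvVariationalEqs} with algebraic growth as $x_2\rightarrow -\infty$ and $x_2\rightarrow +\infty$ respectively, while normal variations grow super-algebraically, so that failure of transversality is equivalent to a non-zero two-sided algebraic solution. Your explicit symmetry/parity computation at $x_2=0$ only makes concrete a step the paper leaves implicit, and the technical core you flag (the clean algebraic-versus-exponential growth dichotomy near $p_{a,1,\pm}$) is precisely what the paper also asserts rather than proves in detail.
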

\begin{proof}
 The main part of the proof in \cite{szmolyan_canards_2001} just restates the claim of the theorem. We therefore include our own proof here. There exists a $c$ sufficiently large, so that $\kappa_{21}(l_{2,\pm}(x_2))\subset C_{a,2}$ for $x_2<-c$ and $\kappa_{21}(l_{2,\pm}(x_2))\subset C_{r,2}$ for $x_2>c$ with $C_{a,2}$ and $C_{r,2}$ the center manifolds, described in chart $\kappa_2$, that are unique in the case of primary singular canards. Variations within $T_{q_{2,\pm}}C_{a,2}$ and $T_{q_{2,\pm}}C_{r,2}$ will therefore be characterized by algebraic growth properties in the past ($x_2\rightarrow -\infty$) and in the future ($x_2\rightarrow \infty$), respectively. Variations normal to $T_{q_{2,\pm}} C_{a,2}$ and $T_{q_{2,\pm}}C_{r,2}$ are, on the other hand, characterized by exponential growth in the past ($x_2\rightarrow -\infty$) and in the future ($x_2\rightarrow \infty$), respectively. The statement of the Lemma therefore follows. 
\end{proof}

We will write \eqref{uvVariationalEqs} as a Weber equation. To do this we first write it as a second order ODE:
\begin{align*}
 \frac{d^2 v}{dx_2^2} - \nu_{\pm} x_2 \frac{dv}{dx_2}+\nu_{\pm}\xi_{\pm} v=0.
\end{align*}
where
\begin{align}
\nu_{\pm} &=-\lambda_{\pm}^{-1}f(w)\vert \beta\vert^2,\nonumber\\
\xi_{\pm} &=  \lambda_{\pm}^{-1} \lambda_{\mp}.\eqlab{xipm}
\end{align}
Note that $\nu_{\pm}>0$ since we have assumed $\lambda_{\pm}<0$. 
We then write $x_2=\nu_{\pm}^{-1/2} \overline x_2$ and obtain the Weber equation:
\begin{align}
 \frac{d^2 v}{d\overline{x}_2^2} - \overline x_2 \frac{dv}{d\overline{x}_2}+\xi_{\pm} v=0.\eqlab{weberEqns}
\end{align}


\begin{remark}\remlab{comparisonMu}
 {The study of the persistence of weak canards in folded nodes in smooth slow-fast systems leads to the consideration of algebraic solutions of a similar Weber equation \cite[Eq. (2.24)]{wechselberger_existence_2005}. However, in our work the coefficient of $v$ in \eqref{weberEqns} is the ratio of the eigenvalues $\xi_{\pm}$ whereas in \cite[Eq. (2.24)]{wechselberger_existence_2005} this coefficient is $\xi_{\pm}-1$.} 
\end{remark}

\begin{lemma}
 If $\xi_{\pm}\in \mathbb N$ then the Hermite polynomial $H_{\xi_{\pm}}(\overline{x}_2/\sqrt{2})$ is a polynomial solution of \eqref{weberEqns} \cite{AbramowitzStegun1964}. This solution has $\xi_{\pm}$ zeros. If $n<\xi_{\pm}<n+1$ with $n\in \mathbb N$ then there exists two linearly independent solutions $v_1=v_1(\overline x_2)$ and $v_2=v_1(-\overline x_2)$ which grow exponentially in the future $\overline x_2\rightarrow \infty$ and in the past $\overline x_2\rightarrow -\infty$ respectively. Furthermore, $v_1$ and $v_2$ possess $n$ zeros.
\end{lemma}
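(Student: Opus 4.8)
The plan is to treat the two claims separately, in each case reducing the Weber equation \eqref{weberEqns} to a classical special-function equation whose solution theory is standard, and to read off the assertions about growth, independence, and zeros from there.

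For the integer case I would rescale $\overline x_2=\sqrt2\,t$, under which \eqref{weberEqns} becomes the physicists' Hermite equation $\ddot v-2t\dot v+2\xi_\pm v=0$. When $\xi_\pm\in\mathbb N$ the power-series solution terminates and gives, up to a scalar, the Hermite polynomial $H_{\xi_\pm}(\overline x_2/\sqrt2)$; that it solves \eqref{weberEqns} is a direct substitution, and that it has exactly $\xi_\pm$ simple real zeros is the classical fact for orthogonal polynomials (equivalently, Sturm separation applied to the self-adjoint form below). This settles the first sentence.

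For the non-integer case I would first remove the drift by $v=e^{\overline x_2^2/4}\psi$, turning \eqref{weberEqns} into the standard Weber (parabolic-cylinder) equation
\[
\psi''+\left(\xi_\pm+\tfrac12-\tfrac{\overline x_2^2}{4}\right)\psi=0,
\]
whose two solutions are the parabolic cylinder functions $D_{\xi_\pm}(\overline x_2)$ and $D_{\xi_\pm}(-\overline x_2)$. From the standard asymptotics $D_{\xi_\pm}(s)\sim s^{\xi_\pm}e^{-s^2/4}$ as $s\to+\infty$, together with the subdominant term carrying the factor $1/\Gamma(-\xi_\pm)$ --- which is nonzero \emph{precisely because} $\xi_\pm\notin\mathbb N$ --- I would set $v_1=e^{\overline x_2^2/4}D_{\xi_\pm}(-\overline x_2)$, so that $v_1\sim e^{\overline x_2^2/2}\,\overline x_2^{-\xi_\pm-1}$ grows exponentially as $\overline x_2\to\infty$. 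Using the reflection symmetry $\overline x_2\mapsto-\overline x_2$ of \eqref{weberEqns} inherited from the time-reversibility noted above, $v_2(\overline x_2):=v_1(-\overline x_2)=e^{\overline x_2^2/4}D_{\xi_\pm}(\overline x_2)$ then grows exponentially as $\overline x_2\to-\infty$. Linear independence is then immediate: $v_2$ is subdominant (algebraic, $\sim\overline x_2^{\xi_\pm}$) as $\overline x_2\to+\infty$ whereas $v_1$ is dominant there, so the two have incompatible leading behaviour and cannot be proportional; equivalently, the recessive one-dimensional subspaces at $+\infty$ and $-\infty$ coincide iff the solution is polynomial, i.e. iff $\xi_\pm\in\mathbb N$.

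Finally, since $e^{\overline x_2^2/4}>0$ the zeros of $v_1$ and $v_2$ are exactly those of $D_{\xi_\pm}(\mp\overline x_2)$, and the reflection shows both carry the same count. I would obtain this count by Sturm oscillation and comparison theory applied to the Weber form: the coefficient $\xi_\pm+\tfrac12-\overline x_2^2/4$ is positive only on the finite interval $|\overline x_2|<2\sqrt{\xi_\pm+1/2}$, so all zeros lie there, and their number is pinned down by anchoring at the integer (Hermite) values of $\xi_\pm$, where the count is known exactly, and following the continuous, monotone motion of the zeros as $\xi_\pm$ ranges over $(n,n+1)$. I expect this last step to be the main obstacle: establishing the stated number of zeros requires the precise connection formula for $D_{\xi_\pm}$ --- in particular the sign of $1/\Gamma(-\xi_\pm)$ on each interval $(n,n+1)$, which governs the far-field behaviour at $-\infty$ --- together with a careful argument that no zero enters from or escapes to infinity except as $\xi_\pm$ crosses an integer, where the dominant exponential term vanishes and a single zero is exchanged at infinity.
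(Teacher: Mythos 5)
Your route is, in substance, the paper's: the paper offers no proof of this lemma at all (it simply cites Abramowitz--Stegun), and the intended argument is precisely the classical reduction you carry out. Your integer case is correct, the substitution $v=e^{\overline{x}_2^2/4}\psi$ to the parabolic cylinder equation is correct, and the pair $v_1(\overline{x}_2)=e^{\overline{x}_2^2/4}D_{\xi_\pm}(-\overline{x}_2)$, $v_2(\overline{x}_2)=v_1(-\overline{x}_2)$, with the $1/\Gamma(-\xi_\pm)$ connection-formula argument, does have the stated growth and is linearly independent exactly when $\xi_\pm\notin\mathbb{N}$. This is also the choice that matters downstream, since $v_1$ (resp.\ $v_2$) is the solution with algebraic growth as $\overline{x}_2\to-\infty$ (resp.\ $+\infty$), i.e.\ the one spanning $T_{q_{2,\pm}}C_{a,2}$ (resp.\ $T_{q_{2,\pm}}C_{r,2}$); in particular \propref{tangencies} needs only this dichotomy, which your construction establishes.

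The genuine problem is the zero count, the step you yourself defer as ``the main obstacle'': executed correctly, your anchoring-and-continuation argument gives $n+1$ zeros on $(n,n+1)$, not $n$. The classical count is that $D_\nu$ has exactly $\lfloor\nu\rfloor+1$ real zeros for non-integer $\nu>0$ (and $\nu$ zeros for $\nu\in\mathbb{N}$). The mechanism is the one you name, but with the opposite bookkeeping: as $\xi_\pm$ increases through the integer $n$, $1/\Gamma(-\xi_\pm)$ changes sign, the behaviour of $D_{\xi_\pm}$ at $-\infty$ flips sign relative to that of $D_n$, and a zero enters from $-\infty$ immediately \emph{above} the integer, so the count is already $n+1$ throughout $(n,n+1)$. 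Concrete check at $\xi_\pm=3/2$: $D_{3/2}(0)=2^{3/4}\sqrt{\pi}/\Gamma(-1/4)<0$, while $D_{3/2}(\overline{x}_2)\to 0^{+}$ as $\overline{x}_2\to+\infty$ and $D_{3/2}(\overline{x}_2)\to+\infty$ as $\overline{x}_2\to-\infty$ (since $\Gamma(-3/2)=4\sqrt{\pi}/3>0$), giving exactly two zeros, not one. Nor can the discrepancy be repaired by a cleverer choice of pair: for $\xi_\pm\in(0,1)$ these sign patterns show that \emph{every} nonzero solution of \eqref{weberEqns} has at least one zero, and for $n\ge 1$ the only solutions with $n$ zeros are perturbations of the odd solution, which grow exponentially at \emph{both} ends and hence are not the tangent-space solutions that the lemma and the subsequent Remark on twists refer to. So what your argument actually proves is the lemma with ``$n$ zeros'' replaced by ``$n+1$ zeros''; as stated the count is off by one, and your proposal as written would not detect this, because it presumes the anchoring step confirms the claim. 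A smaller slip in the same step: it is not true that all zeros lie in the oscillatory region $\vert\overline{x}_2\vert<2\sqrt{\xi_\pm+1/2}$; the zero that has entered from $-\infty$ sits in the non-oscillatory region, where convexity only caps the number of zeros at one per side.
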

\begin{remark}
 The zeros of $v_1$ correspond to the number of twists (cf. \cite[Lemma 4.4]{szmolyan_canards_2001}) of $T_{l_{2,\pm}}C_{2,a}$ along $l_{2,\pm}$. By considering the rotation angle $\theta$ defined by $v/v'=\tan \theta$, it can be seen \cite{szmolyan_canards_2001} that one twist corresponds to one rotation of $180^\circ$.  In slow-fast theory this is a mechanism for generating small oscillations in mixed-mode oscillations \cite{brons-krupa-wechselberger2006:mixed-mode-oscil}.
\end{remark}

Following \lemmaref{transversality} we therefore conclude the following:
\begin{proposition}\proplab{tangencies}
$T_{q_{2,\pm}} C_{a,2}$ intersects $T_{q_{2,\pm}}C_{r,2}$ transversally if and only if $\xi_{\pm}=\lambda_{\pm}^{-1}\lambda_{\mp}\notin \mathbb N$. 
\end{proposition}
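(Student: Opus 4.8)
The plan is to read the proposition off directly from \lemmaref{transversality} combined with the solution theory of the Weber equation \eqref{weberEqns}. By \lemmaref{transversality}, the tangent spaces $T_{q_{2,\pm}}C_{a,2}$ and $T_{q_{2,\pm}}C_{r,2}$ fail to be transverse precisely when the variational system \eqref{uvVariationalEqs} admits a non-zero solution growing at most algebraically both as $x_2\to+\infty$ and as $x_2\to-\infty$. The entire argument therefore reduces to deciding, in terms of $\xi_\pm$, whether such a doubly-algebraic solution exists.

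First I would record that the growth dichotomy for \eqref{uvVariationalEqs} is inherited by \eqref{weberEqns}. In the generic situation $\lambda_\mp\ne 0$ relevant to the hyperbolic cases considered, the second equation of \eqref{uvVariationalEqs} writes $u$ as a non-zero constant multiple of $dv/dx_2$, so a solution $(u,v)$ has algebraic (respectively exponential) growth iff $v$ does; and the rescaling $x_2=\nu_\pm^{-1/2}\overline x_2$ with $\nu_\pm>0$ preserves growth type. The boundary case $\lambda_\mp=0$ forces $\xi_\pm=0\in\mathbb N$ and can be checked by hand to yield a bounded solution, consistent with what follows. Hence the question becomes: for which values of $\xi_\pm$ does \eqref{weberEqns} possess a non-zero solution of at most algebraic growth at both ends $\overline x_2\to\pm\infty$?

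Next I would invoke the structure of \eqref{weberEqns}. At each end the two-dimensional solution space splits into a one-dimensional \emph{recessive} subspace of solutions with algebraic behaviour $\sim|\overline x_2|^{\xi_\pm}$ and a complementary \emph{dominant} direction growing like $e^{\overline x_2^2/2}$; a solution is algebraic at $+\infty$ (respectively $-\infty$) iff it lies in the recessive subspace there. A non-zero solution is therefore algebraic at \emph{both} ends iff the two one-dimensional recessive subspaces coincide. By the lemma immediately preceding this proposition (equivalently, by the classical connection formulae for the parabolic-cylinder equation), this coincidence happens exactly when $\xi_\pm\in\mathbb N$, the common solution then being the Hermite polynomial $H_{\xi_\pm}(\overline x_2/\sqrt2)$; whereas for $n<\xi_\pm<n+1$ the recessive-at-$-\infty$ solution is the Lemma's $v_1$, which is dominant at $+\infty$, and the recessive-at-$+\infty$ solution is $v_2=v_1(-\overline x_2)$, which is dominant at $-\infty$, so the two recessive directions are linearly independent and no non-zero solution is algebraic at both ends.

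Combining the two cases gives the equivalence: a doubly-algebraic solution of \eqref{weberEqns} exists iff $\xi_\pm\in\mathbb N$, hence by \lemmaref{transversality} the tangent spaces are transverse iff $\xi_\pm=\lambda_\pm^{-1}\lambda_\mp\notin\mathbb N$, as in \eqref{xipm}. I expect the only genuine work to be the middle step, namely justifying that for non-integer $\xi_\pm$ the recessive solutions at the two ends really are independent, i.e.\ that the dominant coefficient in the connection formula is non-zero. This is precisely the content packaged in the preceding Weber-equation lemma, so the main obstacle is resolved there rather than here; the remaining manipulations (eliminating $u$, the positive rescaling of $x_2$, and matching growth rates) are routine.
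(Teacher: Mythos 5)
Your proposal is correct and takes essentially the same route as the paper: reduce via \lemmaref{transversality} to the existence of a non-zero solution of \eqref{uvVariationalEqs} with algebraic growth at both ends, pass to the Weber equation \eqref{weberEqns}, and invoke the dichotomy of the preceding lemma (Hermite polynomial $H_{\xi_\pm}(\overline{x}_2/\sqrt{2})$ for $\xi_\pm\in\mathbb N$ versus the exponentially growing pair $v_1$, $v_2$ for non-integer $\xi_\pm$). The paper compresses this deduction into a single sentence; your extra steps (eliminating $u$ via the second equation when $\lambda_\mp\ne 0$, the positive rescaling, and the recessive/dominant splitting at the two ends) are exactly the details it leaves implicit, and they are handled correctly.
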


From this follows that $M_{a,2}(\epsilon)$ and $M_{r,2}(\epsilon)$ intersect transversally in a $\mathcal O(\sqrt{\epsilon})$-neighborhood of $q_{2,\pm}$ if $\xi_{\pm}\notin \mathbb N$. Here we have used the fact that $r_2=\sqrt{\epsilon}$. As in \cite{szmolyan_canards_2001} the condition $\xi_{\pm}\in \mathbb N$ can only be realised in case (N) since $\xi_{\pm}<0$ in case (S). In case (N) where $\lambda_-<\lambda_+<0$, we have $\xi_{\pm}>0$ and $\xi_+<1$ and $\xi_->1$ 
(\propref{EqSlidingVectorField}). Hence only $\xi_-$ can be a natural number. This corresponds to a weak singular canard.

\subsection{Conclusions on the analysis of the two-fold}
The identification of $M_{a}(\epsilon)$ as the continuation of the slow manifold $S_{a,\epsilon}$ close to $\tilde p$ \eqref{tildep2} implies the existence of a canard close to a singular one. Strong singular canards always persist and we can trace their perturbed version backwards on $\overline M_a(\epsilon)$ using the $1D$ stable manifold of $p_{a,1,\pm}$ of \eqref{centerManifoldK2ReducedEqns} as a guide. In this case the stable manifold of $p_{a,1,\pm}$ (see \tabref{tblS} and \tabref{tblN}) coincides with the singular canard. The perturbed singular canard can similarly be traced forwards in $\overline M_r(\epsilon)$. This gives the first statement of our main theorem,  \thmref{mainRes}. A weak singular canard persists whenever $\xi_{-}\notin \mathbb N$. It is, however, as in \cite{szmolyan_canards_2001}, not possible to track these perturbed weak canards onto $S_{a,\epsilon}$ and $S_{r,\epsilon}$. This is to be expected since weak singular canards are non-unique. 

We believe that the analysis in \cite{wechselberger_existence_2005} on the existence of secondary canards near resonances should also apply to our system. The numerics performed in the following section support this claim.


%

\section{Numerics}\seclab{numerics}
In this section we present results from some numerical experiments 
to investigate the bifurcations of primary canards in the limit $\epsilon=0$. The blow up analysis allows us to consider the limit $\epsilon=0$ by continuing $S_{a}$ and $S_r$ using $C_{a,1}$ and $C_{r,1}$ (see \remref{noname}). To perform computations we have to fix a choice of regularization $\phi(y)$. We have based our computations on the following $C^1$-function
\begin{eqnarray*}
 \phi(y) = -\frac12 y^3 + \frac32 y\quad \text{for}\quad y\in (-1,1),\\
 \end{eqnarray*}
 with $\phi(y)=\pm 1$ for $y\gtrless \pm 1$. The degree of the smoothness appears to play little role in the bifurcations of primary canards in the limit $\epsilon=0$. 
 
 We wish to focus on the appearance of secondary canards, which occur close to resonances of a weak singular canard. Hence we consider case (N) and choose a visible-invisible two-fold singularity with the following parameters:
 \begin{align}
  c-\gamma &=\frac52,\quad  c+\gamma=\frac{3(\xi_-+1)}{2(\xi_--1)},\quad  b=1,\quad
  \beta =-1.\eqlab{parameters}
 \end{align}
This gives:
\begin{align}
 \lambda_+ &= -\frac{3}{2(\xi_--1)},\quad\lambda_- = \xi_-\lambda_+,\quad \chi_+=-2,\quad \chi_-=-\frac12.\eqlab{lambdaChiValues}
\end{align}
corresponding to \circled{5} in \figref{ParameterEqType1}. See also \tabref{tblN}. We take initial conditions on  $C_{a,2}=\kappa_{21}(C_{a,1})$ where $r_2=\sqrt{\epsilon}=0$, by setting $x_2=-1/\sqrt{\delta},\,z_2=z_1/\sqrt{\delta},\,w=-\vert \beta \vert^{-1}\vert b\vert z_1+\mathcal O(\delta)$, with fixed $\delta=0.01$. \figref{secondaryCanards} illustrates the intersections of $C_{a,2}\cap \{z_2=0\}$ ($-$) and $C_{r,2}\cap \{z_2=0\}$ ($--$) (compare with Fig. 13 of \cite{wechselberger_existence_2005}), in agreement with \propref{tangencies}. Here we also find bifurcations of \text{secondary canards} for odd $\xi_-$ from the weak canard intersecting $z_2=0$ at 
\begin{align}
 (x_2,y)&=(0,\phi^{-1}(-1/3))=(0,\sqrt{3}\sin(\arctan (2\sqrt{2})/3)-\cos(\arctan (2\sqrt{2})/3))\nonumber\\
 &\approx (0,-0.2261).                                                                                       \eqlab{weakCanardValue}                                                                                                                                                                                                                                                                                                                                                                                                                                                                                                                                                                                                                           \end{align}
 To obtain this expression we have used that $w=-\vert \beta \vert^{-1} \vert b \vert \chi_+=\frac12$ and inverted \eqref{wTransformation} for $y$.
  The secondary canards originate from the equilibrium $p_{a,-,1}$ within $L_{a,1}$ or from the fold line $\tilde l^-$ (see \figref{centerManifoldMa2N} (c)) and they are characterized by their rotational properties about the weak canards. \figref{secondaryCanardsZoom} shows a zoom of the diagrams in \figref{secondaryCanards} near the point \eqref{weakCanardValue}. The point \eqref{weakCanardValue} appears as a black dot in all of the diagrams in \figref{secondaryCanardsZoom}. 
  
  The first bifurcation is seen to occur at $\xi_-=3$, where the curve $C_{a,1}\cap \{z_2=0\}$ is tangent to the $y_2$-axis at the point \eqref{weakCanardValue}. The secondary canard, denoted by $l_{sc,2}^{(1)}$, that appears as a result of this bifurcation is shown for $\xi_->3$ in \figref{secondaryCanards} (d) and \figref{secondaryCanardsZoom} (d) as a new transversal intersection of $C_{a,2}\cap \{z_2=0\}$ with $C_{r,2}\cap \{z_2=0\}$. It rotates $360^\circ$ around the weak canard. This is shown in \figref{C12ZEq0Mu4_5wRotation} (a), (b) and (c) using a projection onto the $(z_2,x_2)$-plane. Initially the secondary canard $l_{sc,2}^{(1)}$ goes beneath the weak canard, then goes above it and finally beneath it again. This is further illustrated in \figref{C12ZEq0Mu6.5wRotationNewDetails} where we have projected the secondary canard $l_{sc,2}^{(1)}$ for $\xi_-=6.5$ onto the plane $(2x_2-z_2,y_2)$. The reason for considering this plane is that here the weak canard appears as a point at $(0,-0.2261)$ and the single rotation of $l_{sc,2}^{(1)}$ about the weak canard is clearly visible. 
  
  At $\xi_-=5$ there is another bifucation of the weak canard, again the curve $C_{a,1}\cap \{z_2=0\}$ is tangent to the $y_2$-axis at the point \eqref{weakCanardValue} corresponding to the weak canard. This introduces another secondary canard $l_{sc,2}^{(2)}$ for $\xi_->5$. This is seen in \figref{secondaryCanards} (f) as a new transversal intersection and it is also visible in the corresponding close-up in \figref{secondaryCanardsZoom} (f). The secondary canard $l_{sc,2}^{(2)}$ rotates $720^\circ$ around the singular canard, which is illustrated in \figref{C12ZEq0Mu4_5wRotation} (c) and \figref{C12ZEq0Mu6.5wRotationNewDetails}. Hence, in accordance with the theory of the reference \cite{wechselberger_existence_2005}, we have observed that bifurcations only occur at $\xi_-=2n+1$ and that each such bifurcation give rise to a secondary canard that is visible for $\xi_->2n+1$ and which rotates $n\times 360^\circ$ around the weak canard. 

 There is also a strong canard. Using that $w=-\vert \beta \vert^{-1}\vert b\vert \chi_- = 2$ (cf. e.g. \eqref{l2pmEq}) together with \eqref{wTransformation} we find that it intersects $z_2=0$ at 
 \begin{align*}
  (x_2,y)&=(0,\phi^{-1}(1/3))=(0,-\phi^{-1}(-1/3)) \approx (0,0.2261).
 \end{align*}
 It is visible in \figref{secondaryCanards} as a transverse intersection of $C_{a,2}$ with $C_{r,2}$ at this value. In agreement with \thmref{mainRes}, the strong canard never undergoes a bifurcation.
\begin{figure}[h!] 
\begin{center}
\subfigure[$\xi_-=2$]{\includegraphics[width=.45\textwidth]{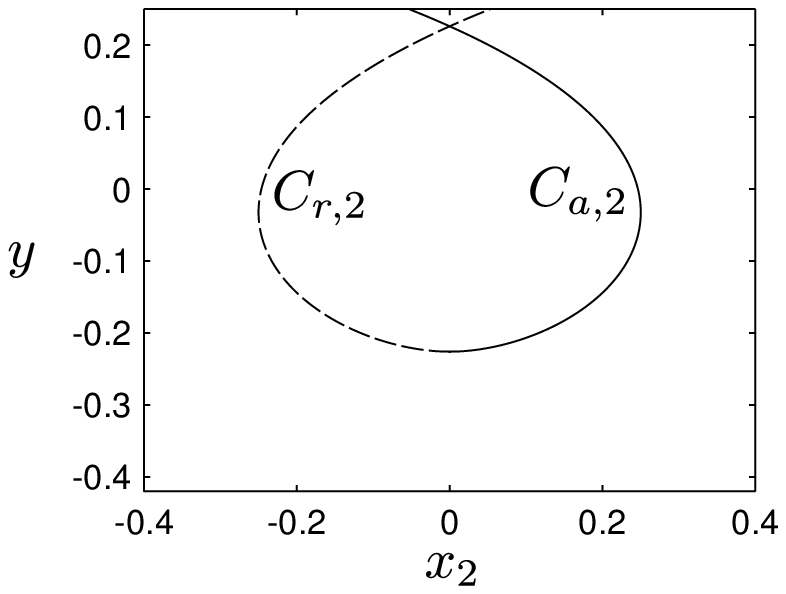}}
\subfigure[$\xi_-=2.5$]{\includegraphics[width=.45\textwidth]{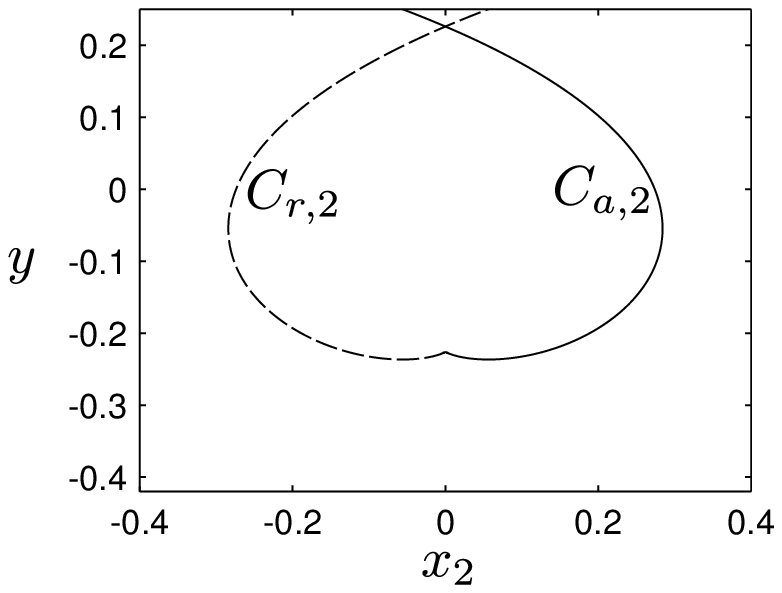}}
\subfigure[$\xi_-=3$]{\includegraphics[width=.45\textwidth]{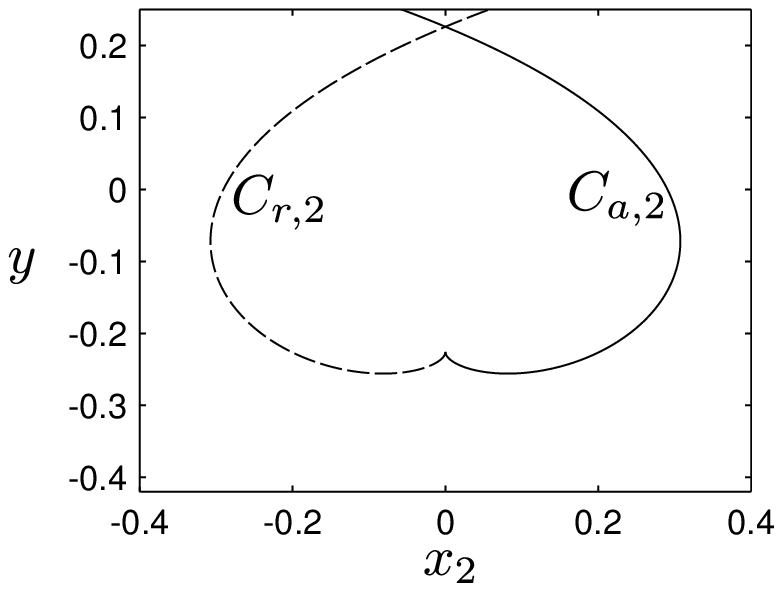}}
\subfigure[$\xi_-=3.5$]{\includegraphics[width=.45\textwidth]{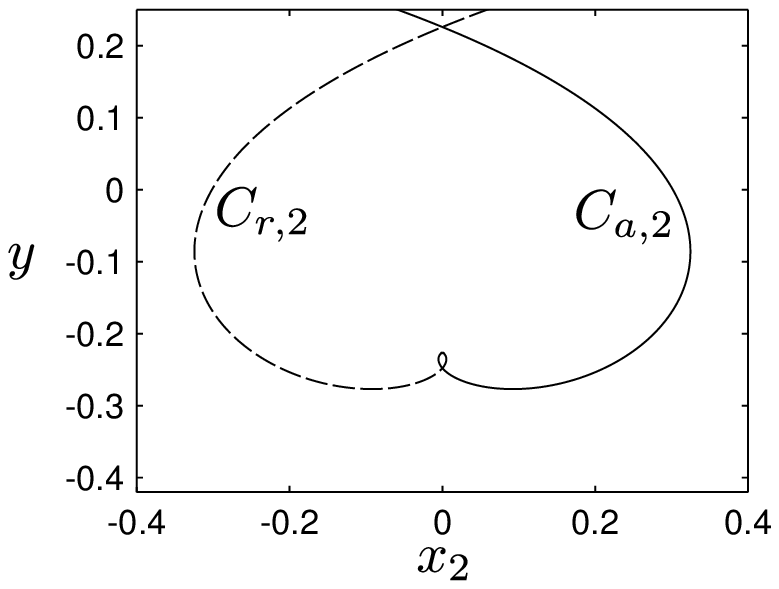}}
\subfigure[$\xi_-=5$]{\includegraphics[width=.45\textwidth]{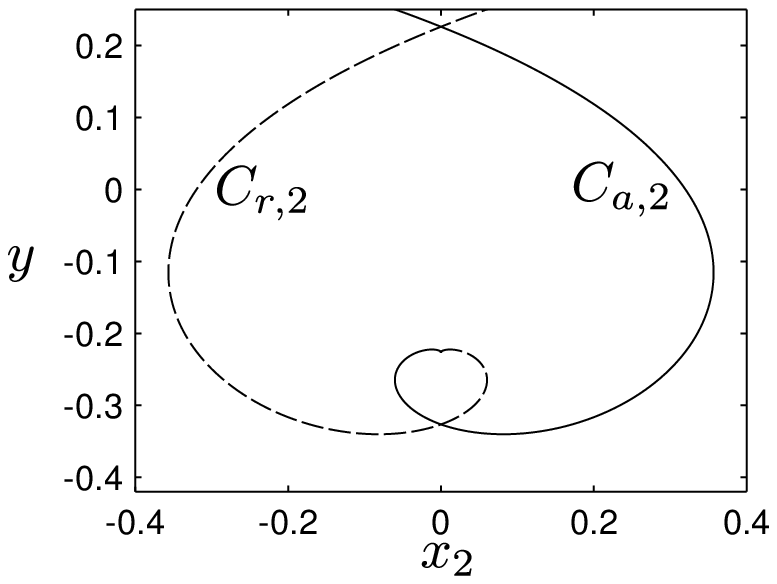}}
\subfigure[$\xi_-=6.5$]{\includegraphics[width=.45\textwidth]{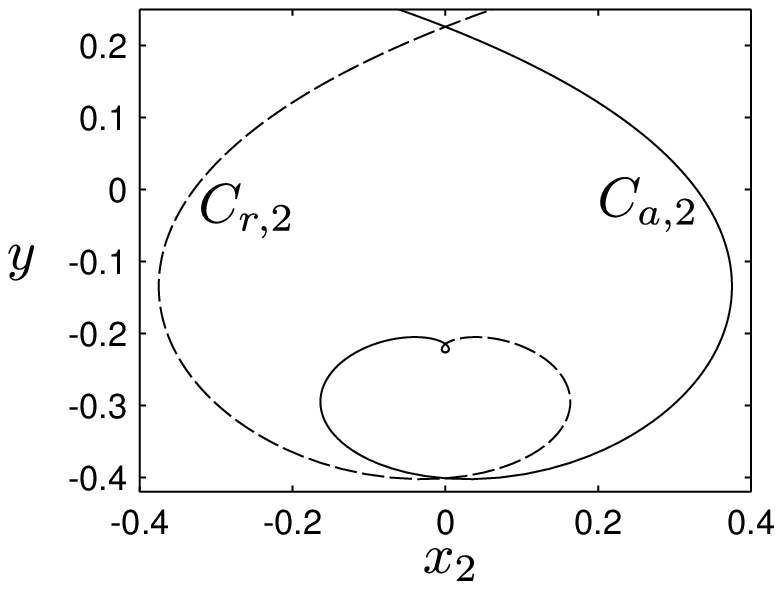}}
\end{center}
\caption{The intersections $C_{a,2}\cap \{z_2=0\}$ and $C_{r,2}\cap \{z_2=0\}$ for different values of $\xi_-$. An additional transversal intersection appears upon the passage of a bifurcation-value $\xi_-=2n+1$, $n\in \mathbb N$. The cases $\xi_-=2n$ do not introduce new bifurcations. The new intersections are magnified in \figref{secondaryCanardsZoom}.}
\figlab{secondaryCanards}
\end{figure}

\begin{figure}[h!] 
\begin{center}
\subfigure[$\xi_-=2$]{\includegraphics[width=.45\textwidth]{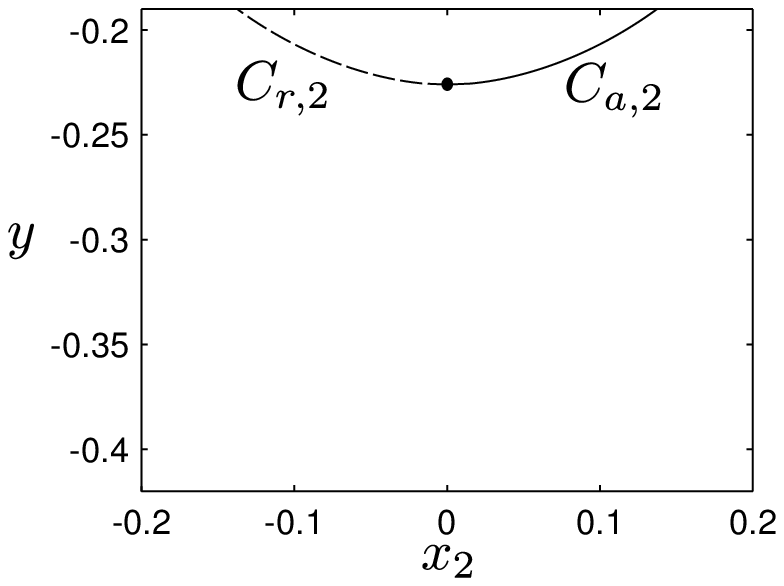}}
\subfigure[$\xi_-=2.5$]{\includegraphics[width=.45\textwidth]{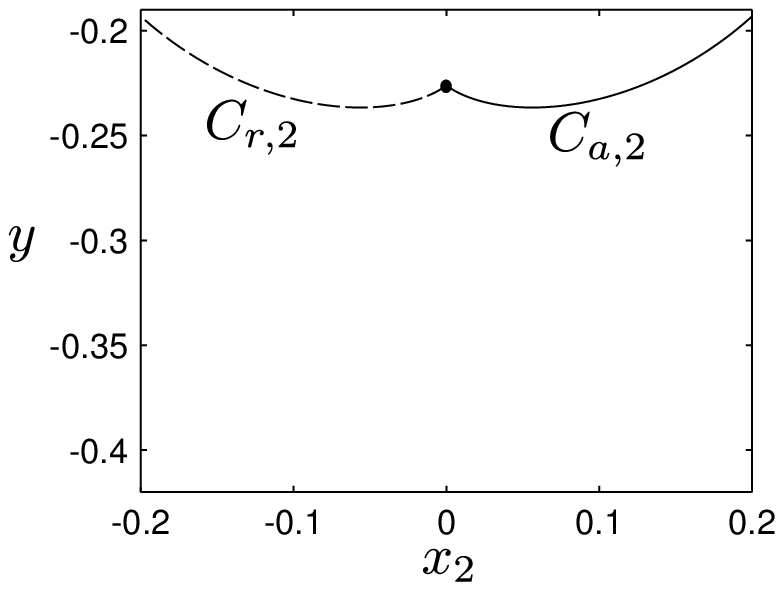}}
\subfigure[$\xi_-=3$]{\includegraphics[width=.45\textwidth]{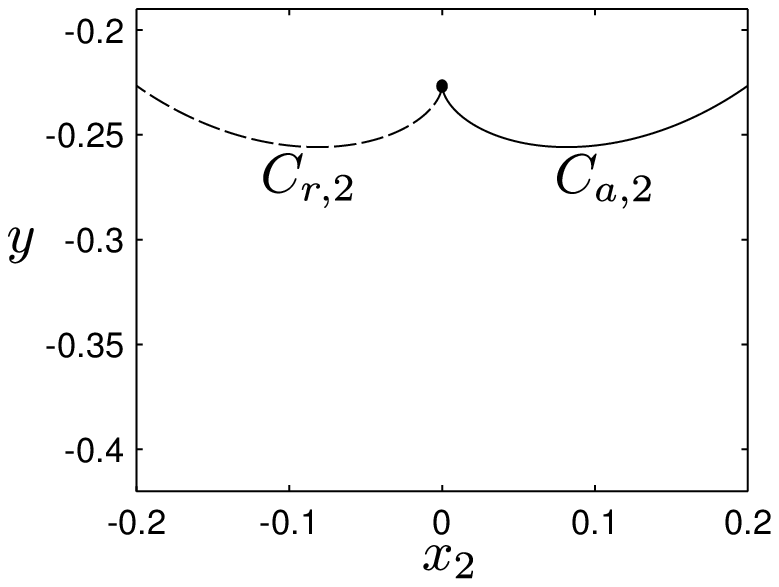}}
\subfigure[$\xi_-=3.5$]{\includegraphics[width=.45\textwidth]{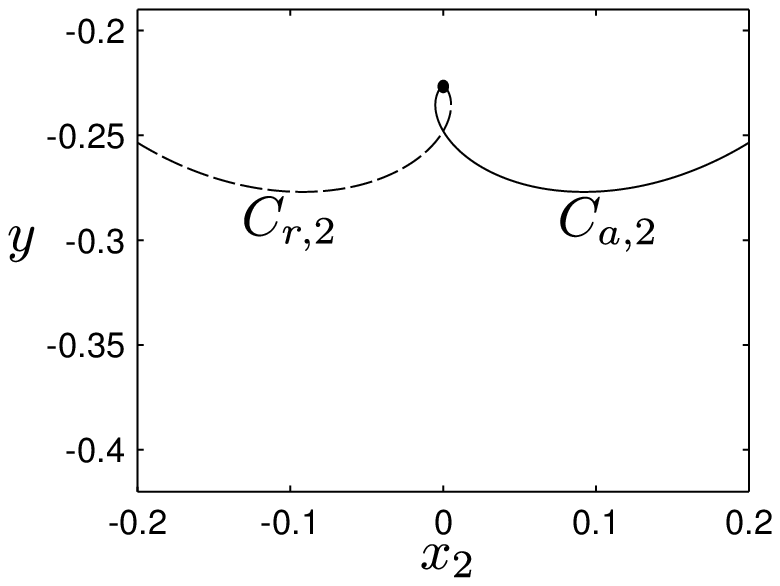}}
\subfigure[$\xi_-=5$]{\includegraphics[width=.45\textwidth]{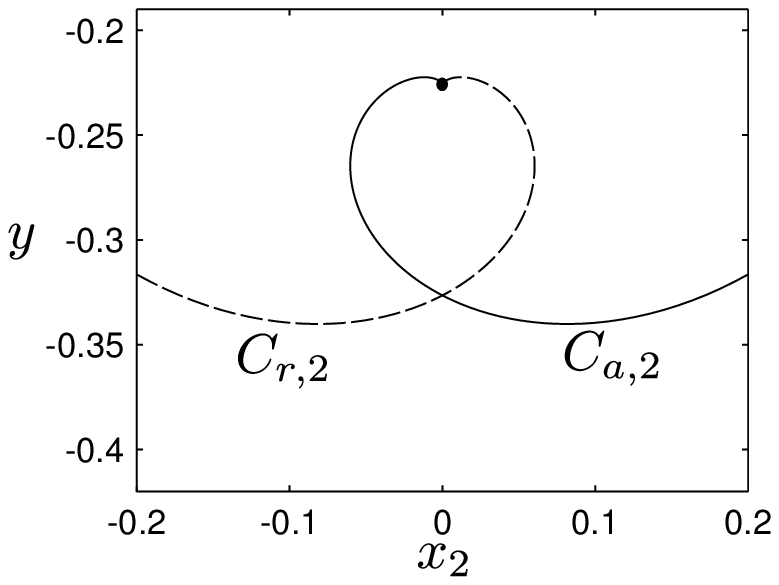}}
\subfigure[$\xi_-=6.5$]{\includegraphics[width=.45\textwidth]{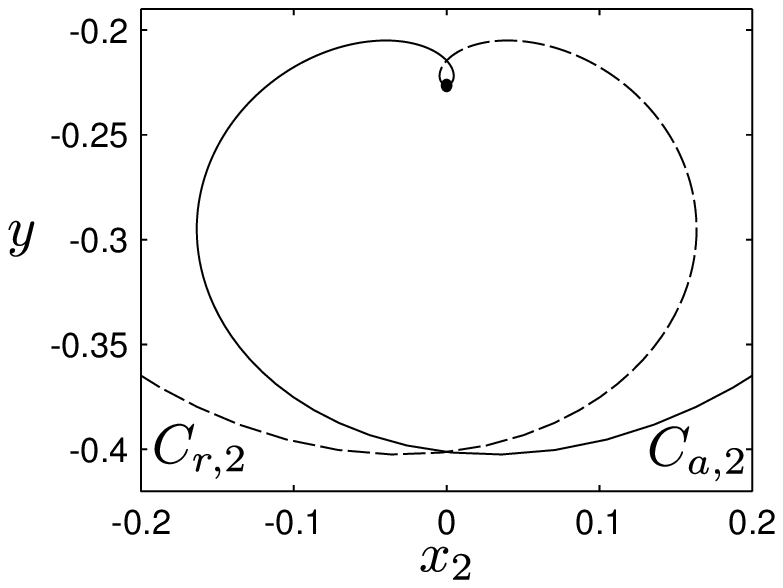}}
\end{center}
\caption{Zoom in on the vicinity of the weak canards (indicated by the black dots) that are visible in the figures in \figref{secondaryCanards}. The additional intersections are illustrated by $\circ$'s. }
\figlab{secondaryCanardsZoom}
\end{figure}
\begin{figure}[h!] 
\begin{center}
\subfigure[$\xi_-=3.5$]{\includegraphics[width=.45\textwidth]{./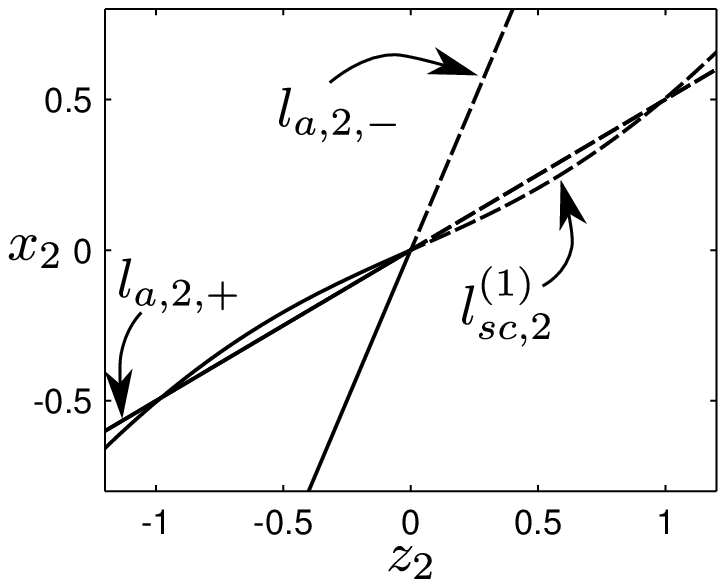}}
\subfigure[$\xi_-=4.5$]{\includegraphics[width=.45\textwidth]{./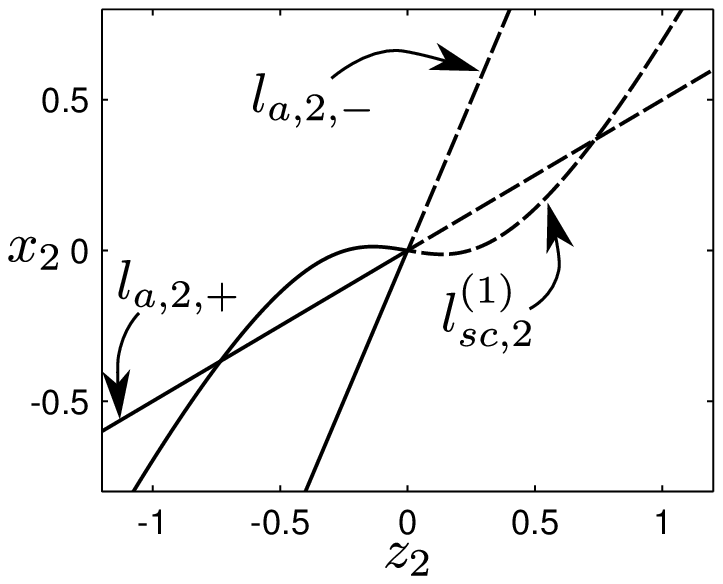}}
\subfigure[$\xi_-=6.5$]{\includegraphics[width=.45\textwidth]{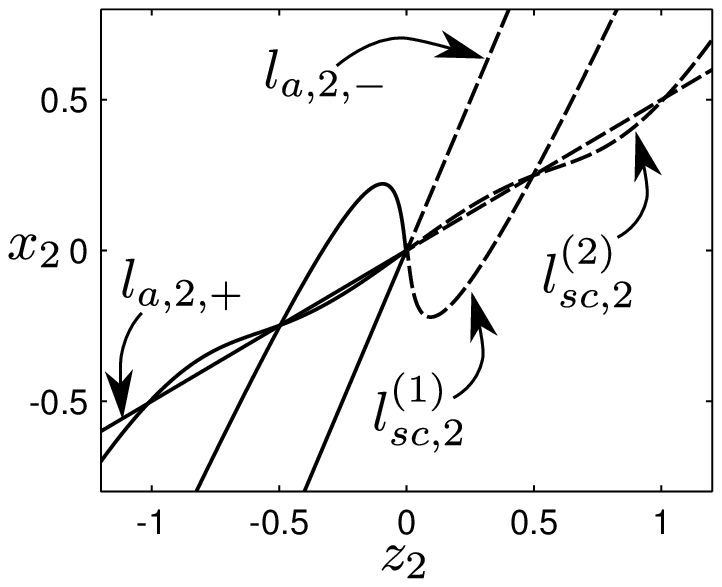}}
\end{center}
\caption{Illustration of the rotational properties of the bifurcating secondary canards. The secondary canards $l_{sc,2}^{(1)}$ and $l_{sc,2}^{(2)}$ are seen to rotate $360^{\circ}$ respectively $720^\circ$ about the weak canard $l_{a,2,+}$. }
\figlab{C12ZEq0Mu4_5wRotation}
\end{figure}

\begin{figure}[h!]
\begin{center}
 {\includegraphics[width=.45\textwidth]{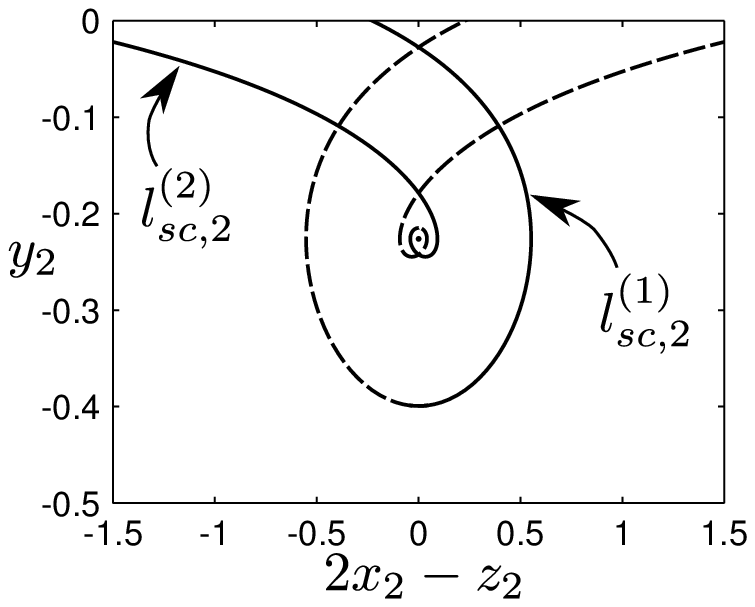}}
 \end{center}
 \caption{Illustration of the rotational properties of the bifurcating secondary canards for $\xi_-=6.5$ using a projection onto the plane described by $(2x_2-z_2,y_2)$.}
 \figlab{C12ZEq0Mu6.5wRotationNewDetails}
\end{figure}

\section{Future work}\seclab{future}
In $\mathbb R^2$, two-folds are co-dimension one \cite{filippov1988differential,kuznetsov_one-parameter_2003}. As in $\mathbb R^3$, they are associated with singular canards. But, unlike in $\mathbb R^3$, they are also connected to the existence and the non-existence of pseudo-equilibria (see e.g. Figs. 10 and 11 in \cite{kuznetsov_one-parameter_2003}). In \cite{krkri_2014} we consider the regularization of two-folds in $\mathbb R^2$.

\section{Conclusions}\seclab{conclusion}
In this paper, we have brought together three different areas of nonlinear dynamics; canards, piecewise smooth systems and blow up methods, each of which is currently attracting intense interest amongst researchers. The study of canards in smooth dynamical systems, begun with \cite{Benoit81}, continues to deliver surprises \cite{wechselberger_existence_2005}. Similarly, piecewise smooth systems \cite{Bernardo08, filippov1988differential,Utkin77}, which pose fundamental theoretical problems as well as being widely applicable, also continue to yield new results \cite{jeffrey_geometry_2011}. Finally, blow up methods, whose origins go back over a century \cite{Alvarez11}, have recently received a boost by their applications to geometric singular perturbation theory \cite{krupa_extending_2001}.

Recently, it was shown that canards are naturally linked to the two-fold singularities of piecewise smooth systems \cite{Desroches13,desroches_canards_2011,  Prohens13}. It is a natural question to ask what happens to these (singular) canards when the underlying piecewise smooth system is regularized. 
In this paper we have used the blow up method of Dumortier and Roussarie \cite{dumortier_1991,dumortier_1993,dumortier_1996} in the formulation of Krupa and Szmolyan \cite{krupa_extending_2001} to study the effect of regularization on canards of two-fold singularities in piecewise smooth dynamical systems \cite{filippov1988differential}. As many previous authors have done, we used the regularization approach of Sotomayor and Teixeira \cite{Sotomayor96}. But before examining the two-fold, we demonstrated the power of our approach by considering the simpler case of the fold line away from the two-fold. This is also an area that is the subject of current research, in its own right. There are no canards in this case, but traditional slow-fast theory still fails close to the fold line. This problem has been considered by Bonet and Seara \cite{reves_regularization_2014}, who used asymptotic methods to show that topological features of the associated piecewise smooth bifurcations are preserved under regularization. Using our approach, we recovered one of their main results in a simple manner. 
Then, for the two-fold, we showed that the regularized system only fully retains the features of the singular canards in the piecewise smooth system in the cases when the sliding region does not include a full sector of singular canards. In particular, we showed that every locally unique primary singular canard persists under regularization. For the case of a sector of primary singular canards, we showed that the regularized system contains a canard, provided a certain non-resonance condition holds true. Finally, we provided numerical evidence for the existence of secondary canards near resonances. 



\bibliography{refs}

\begin{thebibliography}{10}

\bibitem{AbramowitzStegun1964}
M.~Abramowitz and A.~Stegun.
\newblock {\em Handbook of Mathematical Functions}, volume~55.
\newblock National Bureau of Standards Applied Mathematics Series, Washington,
  DC, 1964.

\bibitem{Alvarez11}
M.~J. \'Alvarez, A.~Ferragut, and X.~Jarque.
\newblock A survey on the blow up technique.
\newblock {\em Int. J. Bif. Chaos}, 21:3103--3118, 2011.

\bibitem{Benoit81}
E.~Beno\^{i}t, J.~L. Callot, F.~Diener, and M.~Diener.
\newblock Chasse au canard.
\newblock {\em Collect. Math.}, 31-32:37--119, 1981.

\bibitem{brons-krupa-wechselberger2006:mixed-mode-oscil}
M.~Br{\o}ns, M.~Krupa, and M.~Wechselberger.
\newblock Mixed mode oscillations due to the generalized canard phenomenon.
\newblock In W.~Nagata and N.~Sri Namachchivaya, editors, {\em Bifurcation
  Theory and Spatio-Temporal Pattern Formation}, volume~49 of {\em Fields
  Institute Communications}, pages 39--64. American Mathematical Society, 2006.

\bibitem{Carmona08}
V.~Carmona, F.~Fern\'andez-S\'anchez, and A.~E. Teruel.
\newblock Existence of a reversible {T}-point heteroclinic cycle in a piecewise
  linear version of the {M}ichelson system.
\newblock {\em {SIAM} Journal on Applied Dynamical Systems}, 7:1032--1048,
  2008.

\bibitem{casey06}
R.~Casey, H.~{de Jong}, and J.-L. Gouz\'e.
\newblock Piecewise-linear models of genetic regulatory networks: {E}quilibria
  and their stability.
\newblock {\em J. Math. Biol.}, 52:27--56, 2006.

\bibitem{cortez2008}
J.~Cortez.
\newblock {Discontinuous dynamical systems}.
\newblock {\em {Control Systems, IEEE}}, {28}({3}):{36--73}, {2008}.

\bibitem{Desroches13}
M.~Desroches, E.~Freire, S.~J. Hogan, E.~Ponce, and P.~Thota.
\newblock Canards in piecewise-linear systems: explosions and super-explosions.
\newblock {\em Proc. Roy. Soc. Lond. A.}, 469:20120603, 2013.

\bibitem{desroches_canards_2011}
M.~Desroches and M.~R. Jeffrey.
\newblock Canards and curvature: nonsmooth approximation by pinching.
\newblock {\em Nonlinearity}, 24(5):1655--1682, May 2011.

\bibitem{Bernardo08}
M.~di~Bernardo, C.~J. Budd, A.~R. Champneys, and P.~Kowalczyk.
\newblock {\em Piecewise-smooth Dynamical Systems: Theory and Applications}.
\newblock Springer Verlag, 2008.

\bibitem{dumortier_1991}
F.~Dumortier.
\newblock Local study of planar vector fields: Singularities and their
  unfoldings.
\newblock In H.~W.~Broer et~al, editor, {\em Structures in Dynamics, Finite
  Dimensional Deterministic Studies}, volume~2, pages 161--241. Springer
  Netherlands, 1991.

\bibitem{dumortier_1993}
F.~Dumortier.
\newblock Techniques in the theory of local bifurcations: Blow-up, normal
  forms, nilpotent bifurcations, singular perturbations.
\newblock In D.~Schlomiuk, editor, {\em Bifurcations and Periodic Orbits of
  Vector Fields}, volume 408 of {\em NATO ASI Series}, pages 19--73. Springer
  Netherlands, 1993.

\bibitem{dumortier_1996}
F.~Dumortier and R.~Roussarie.
\newblock Canard cycles and center manifolds.
\newblock {\em Mem. Amer. Math. Soc.}, 121:1--96, 1996.

\bibitem{ISI:000174961100007}
T.~Engleder, P.~Vielsack, and K.~Schweizerhof.
\newblock {FE-regularization of non-smooth vibrations due to friction and
  impacts}.
\newblock {\em {Comp. Mech.}}, {28}({2}):{162--168}, {2002}.

\bibitem{fen1}
N.~Fenichel.
\newblock Persistence and smoothness of invariant manifolds for flows.
\newblock {\em Indiana University Mathematics Journal}, 21:193--226, 1971.

\bibitem{fen2}
N.~Fenichel.
\newblock Asymptotic stability with rate conditions.
\newblock {\em Indiana University Mathematics Journal}, 23:1109--1137, 1974.

\bibitem{fen3}
N.~Fenichel.
\newblock Geometric singular perturbation theory for ordinary differential
  equations.
\newblock {\em J. Diff. Eq.}, 31:53--98, 1979.

\bibitem{filippov1988differential}
A.F. Filippov.
\newblock {\em Differential Equations with Discontinuous Righthand Sides}.
\newblock Mathematics and its Applications. Kluwer Academic Publishers, 1988.

\bibitem{Guckenheimer97}
J.~Guckenheimer and P.~Holmes.
\newblock {\em Nonlinear Oscillations, Dynamical Systems and Bifurcations of
  Vector Fields}.
\newblock Springer Verlag, 5th edition, 1997.

\bibitem{jeffrey_geometry_2011}
M.~R. Jeffrey and S.~J. Hogan.
\newblock The geometry of generic sliding bifurcations.
\newblock {\em {SIAM} Review}, 53(3):505--525, January 2011.

\bibitem{jones_1995}
C.K.R.T. Jones.
\newblock {\em Geometric Singular Perturbation Theory, Lecture Notes in
  Mathematics, Dynamical Systems (Montecatini Terme)}.
\newblock Springer, Berlin, 1995.

\bibitem{kosiuk_scaling_2011}
I.~Kosiuk and P.~Szmolyan.
\newblock Scaling in singular perturbation problems: Blowing up a relaxation
  oscillator.
\newblock {\em {SIAM} Journal on Applied Dynamical Systems}, 10(4):1307--1343,
  January 2011.

\bibitem{krkri_2014}
K.~Uldall Kristiansen and S.~J. Hogan.
\newblock Regularizations of fold bifurcations in planar piecewise smooth
  systems.
\newblock {\em In preparation}, 2014.

\bibitem{krupa_extending_2001}
M.~Krupa and P.~Szmolyan.
\newblock Extending geometric singular perturbation theory to nonhyperbolic
  points - fold and canard points in two dimensions.
\newblock {\em {SIAM} Journal on Mathematical Analysis}, 33(2):286--314, 2001.

\bibitem{krupa_extending_2001_nonlinearity}
M.~Krupa and P.~Szmolyan.
\newblock Extending slow manifolds near transcritical and pitchfork
  singularities.
\newblock {\em Nonlinearity}, 14(6):1473--1491, 2001.

\bibitem{krupa_local_2010}
M.~Krupa and M.~Wechselberger.
\newblock Local analysis near a folded saddle-node singularity.
\newblock {\em Journal of Differential Equations}, 248(12):2841--2888, June
  2010.

\bibitem{2014arXiv1403.5658K}
C.~{Kuehn} and P.~{Szmolyan}.
\newblock {Multiscale geometry of the Olsen model and non-classical relaxation
  oscillations}.
\newblock {\em {arXiv} preprint {arXiv:1403.5658}}, 2014.

\bibitem{kuznetsov_one-parameter_2003}
Yu~A. Kuznetsov, S.~Rinaldi, and Alessandra Gragnani.
\newblock One-parameter bifurcations in planar filippov systems.
\newblock {\em International Journal of Bifurcation and chaos},
  13(08):2157--2188, 2003.

\bibitem{Llibre07}
J.~Llibre, P.~R. da~Silva, and M.~A. Teixeira.
\newblock Regularization of discontinuous vector fields on $\mathbb{R}^3$ via
  singular perturbation.
\newblock {\em J. Dyn. Diff. Eq.}, 3:309--331, 1997.

\bibitem{llibre_sliding_2008}
J.~Llibre, P.~R. da~Silva, and M.~A. Teixeira.
\newblock Sliding vector fields via slow-fast systems.
\newblock {\em Bulletin of the Belgian Mathematical Society-Simon Stevin},
  15(5):851--869, 2008.

\bibitem{Llibre97}
J.~Llibre and M.~A. Teixeira.
\newblock Regularization of discontinuous vector fields in dimension three.
\newblock {\em Discr. Cont. Dyn. Sys.}, 3:235--241, 1997.

\bibitem{MakarenkovLamb12}
O.~Makarenkov and J.~S.~W. Lamb.
\newblock Dynamics and bifurcation of nonsmooth systems: {A} survey.
\newblock {\em Physica D}, 241:1826--1844, 2012.

\bibitem{ISI:000246954500016}
S.~McNamara.
\newblock {Rigid and quasi-rigid theories of granular media}.
\newblock In {Eberhard, P}, editor, {\em {IUTAM Symposium on Multiscale
  Problems in Multibody System Contacts}}, volume~{1} of {\em {IUTAM
  Bookseries}}, pages {163--172}. {IUTAM}, {2007}.
\newblock {IUTAM Symposium on Multiscale Problems in Multibody System Contacts,
  Stuttgart, Germany, Feb 20-23, 2006}.

\bibitem{Michelson86}
D.~Michelson.
\newblock Steady solutions of the {K}uramoto-{S}ivashinsky equation.
\newblock {\em Physica D}, 19:89--111, 1986.

\bibitem{Prohens13}
R.~Prohens and A.~E. Teruel.
\newblock Canard trajectories in 3{D} piecewise linear systems.
\newblock {\em Discr. Cont. Dyn. Sys.}, 33:4595--4611, 2013.

\bibitem{reves_regularization_2014}
C.~B. Reves and T.~M. Seara.
\newblock Regularization of sliding global bifurcations derived from the local
  fold singularity of {F}ilippov systems.
\newblock {\em {arXiv} preprint {arXiv:1402.5237}}, 2014.

\bibitem{Sotomayor96}
J.~Sotomayor and M.~A. Teixeira.
\newblock Regularization of discontinuous vector fields.
\newblock In {\em Proceedings of the International Conference on Differential
  Equations, Lisboa}, pages 207--223, 1996.

\bibitem{ISI:000246954500026}
W.~Stamm and A.~Fidlin.
\newblock {Regularization of 2D frictional contacts for rigid body dynamics}.
\newblock In {Eberhard, P}, editor, {\em {IUTAM Symposium on Multiscale
  Problems in Multibody System Contacts}}, volume~{1} of {\em {IUTAM
  Bookseries}}, pages {291--300}. {IUTAM}, {2007}.
\newblock {IUTAM Symposium on Multiscale Problems in Multibody System Contacts,
  Stuttgart, Germany, Feb 20-23, 2006}.

\bibitem{szmolyan_canards_2001}
P.~Szmolyan and M.~Wechselberger.
\newblock Canards in $\mathbb{R}^3$.
\newblock {\em J. Diff. Eq.}, 177(2):419--453, December 2001.

\bibitem{teixeira_bifurcations_1993}
M.~A. Teixeira.
\newblock Generic bifurcations of sliding vector fields.
\newblock {\em J. Math. Anal. Appl.}, 176:436--457, 1993.

\bibitem{Utkin77}
V.~I. Utkin.
\newblock Variable structure systems with sliding modes.
\newblock {\em IEEE Trans. Automatic Control}, 22:212--222, 1977.

\bibitem{wechselberger_existence_2005}
M.~Wechselberger.
\newblock Existence and bifurcation of canards in $\mathbb{R}^3$ in the case of
  a folded node.
\newblock {\em {SIAM} Journal on Applied Dynamical Systems}, 4(1):101--139,
  January 2005.

\end{thebibliography}
\bibliographystyle{plain}

\newpage

 \end{document}